\begin{document}
\theoremstyle{plain}
\newtheorem{MainThm}{Theorem}
\newtheorem{thm}{Theorem}[section]
\newtheorem{clry}[thm]{Corollary}
\newtheorem{prop}[thm]{Proposition}
\newtheorem{lemma}[thm]{Lemma}
\newtheorem{deft}[thm]{Definition}
\newtheorem{hyp}{Assumption}
\newtheorem*{conjecture}{Conjecture}
\newtheorem*{question}{Question}

\newtheorem{claim}[thm]{Claim}

\theoremstyle{definition}
\newtheorem*{definition}{Definition}
\newtheorem{assumption}{Assumption}
\newtheorem{rem}[thm]{Remark}
\newtheorem*{remark}{Remark}
\newtheorem*{acknow}{Acknowledgements}

\newtheorem{example}[thm]{Example}
\newtheorem*{examplenonum}{Example}
\numberwithin{equation}{section}
\newcommand{\nocontentsline}[3]{}
\newcommand{\tocless}[2]{\bgroup\let\addcontentsline=\nocontentsline#1{#2}\egroup}
\newcommand{\eps}{\varepsilon}
\renewcommand{\phi}{\varphi}
\renewcommand{\d}{\partial}
\newcommand{\re}{\mathop{\rm Re} }
\newcommand{\im}{\mathop{\rm Im}}
\newcommand{\mR}{\mathbb{R}}
\newcommand{\mC}{\mathbb{C}}
\newcommand{\mN}{\mathbb{N}} 
\newcommand{\mZ}{\mathbb{Z}} 
\newcommand{\mK}{\mathbb{K}}
\newcommand{\supp}{\mathop{\rm supp}}
\newcommand{\abs}[1]{\lvert #1 \rvert}
\newcommand{\norm}[1]{\lVert #1 \rVert}
\newcommand{\csubset}{\Subset}
\newcommand{\detg}{\lvert g \rvert}
\newcommand{\msetminus}{\setminus}

\newcommand{\br}[1]{\langle #1 \rangle}

\newcommand{\ehat}{\,\hat{\rule{0pt}{6pt}}\,}
\newcommand{\echeck}{\,\check{\rule{0pt}{6pt}}\,}
\newcommand{\etilde}{\,\tilde{\rule{0pt}{6pt}}\,}

\newcommand{\tr}{\mathrm{tr}}
\newcommand{\mdiv}{\mathrm{div}}

\title[Calder\'on problem with partial data]{The Calder\'on problem with partial data on manifolds and applications}

\author{Carlos Kenig}
\address{Department of Mathematics, University of Chicago}
\email{cek@math.uchicago.edu}

\author{Mikko Salo}
\address{Department of Mathematics and Statistics, University of Jyv\"askyl\"a}
\email{mikko.j.salo@jyu.fi}


\begin{abstract}
We consider Calder\'on's inverse problem with partial data in dimensions $n \geq 3$. If the inaccessible part of the boundary satisfies a (conformal) flatness condition in one direction, we show that this problem reduces to the invertibility of a broken geodesic ray transform. In Euclidean space, sets satisfying the flatness condition include parts of cylindrical sets, conical sets, and surfaces of revolution. We prove local uniqueness in the Calder\'on problem with partial data in admissible geometries, and global uniqueness under an additional concavity assumption. This work unifies two earlier approaches to this problem (\cite{KSU} and \cite{I}) and extends both. The proofs are based on improved Carleman estimates with boundary terms, complex geometrical optics solutions involving reflected Gaussian beam quasimodes, and invertibility of (broken) geodesic ray transforms. This last topic raises questions of independent interest in integral geometry.
\end{abstract}

\maketitle

\tableofcontents

\section{Introduction} \label{sec_intro}

This article is concerned with inverse problems where measurements are made only on part of the boundary. A typical example is the inverse problem of Calder\'on, where the objective is to determine the electrical conductivity of a medium from voltage and current measurements on its boundary. The mathematical formulation of this problem is as follows. Let $\Omega \subset \mR^n$, $n \geq 2$, be a bounded domain with smooth boundary. Given a positive function $\gamma \in L^{\infty}(\Omega)$ (the electrical conductivity of the medium) and two open subsets $\Gamma_D, \Gamma_N$ of $\partial \Omega$, consider the partial Cauchy data set 
\begin{align*}
C_{\gamma}^{\Gamma_D,\Gamma_N} = \{ (u|_{\Gamma_D}, \gamma \partial_{\nu} u|_{\Gamma_N}) &\,;\, \mdiv(\gamma \nabla u) = 0 \text{ in } \Omega, \ u \in H^1(\Omega), \\
 & \quad \supp(u|_{\partial \Omega}) \subset \Gamma_D \}.
\end{align*}
The Calder\'on problem with partial data asks to determine the conductivity $\gamma$ from the knowledge of $C_{\gamma}^{\Gamma_D, \Gamma_N}$ for possibly very small sets $\Gamma_D, \Gamma_N$. Here $\partial_{\nu}$ is the normal derivative, and the conormal derivative $\gamma \partial_{\nu} u|_{\partial \Omega}$ is interpreted in the weak sense as an element of $H^{-1/2}(\partial \Omega)$. 

A closely related problem is to determine a potential $q \in L^{\infty}(\Omega)$ from partial boundary measurements for the Schr\"odinger equation, given by the partial Cauchy data set 
\begin{align*}
C_{q}^{\Gamma_D,\Gamma_N} = \{ (u|_{\Gamma_D}, \partial_{\nu} u|_{\Gamma_N}) &\,;\, (-\Delta+q) = 0 \text{ in } \Omega, \ u \in H_{\Delta}(\Omega), \\
 & \quad \supp(u|_{\partial \Omega}) \subset \Gamma_D \}.
\end{align*}
Here we use the space 
$$
H_{\Delta}(\Omega) = \{ u \in L^2(\Omega) \,;\, \Delta u \in L^2(\Omega) \},
$$
and the trace $u|_{\partial \Omega}$ and normal derivative $\partial_{\nu} u|_{\partial \Omega}$ are in $H^{-1/2}(\partial \Omega)$ and $H^{-3/2}(\partial \Omega)$ (see \cite{BU}). Above, one thinks of $u|_{\partial \Omega}$ as Dirichlet data prescribed only on $\Gamma_D$, and one measures the Neumann data of the corresponding solution on $\Gamma_N$. If $\Lambda_{\gamma}: H^{1/2}(\Omega) \to H^{-1/2}(\partial \Omega)$ is the Dirichlet-to-Neumann map (DN map) given by 
$$
\Lambda_{\gamma}: u|_{\partial \Omega} \mapsto \gamma \partial_{\nu} u|_{\partial \Omega} \ \ \text{where $u \in H^1(\Omega)$ solves $\mdiv(\gamma \nabla u) = 0$ in $\Omega$},
$$
then the partial Cauchy data set is a restriction of the graph of $\Lambda_{\gamma}$, 
$$
C_{\gamma}^{\Gamma_D,\Gamma_N} = \{ (f|_{\Gamma_D}, \Lambda_{\gamma} f|_{\Gamma_N}) \,;\, f \in H^{1/2}(\partial \Omega), \ \supp(f) \subset \Gamma_D \}.
$$
A similar interpretation is valid for $C_{q}^{\Gamma_D,\Gamma_N}$ provided that $0$ is not a Dirichlet eigenvalue of $-\Delta+q$ in $\Omega$.

The problems above are well studied questions in the theory of inverse problems. The case of full data ($\Gamma_D = \Gamma_N = \partial \Omega$) has received the most attention. Major results include \cite{SU}, \cite{HT} in dimensions $n \geq 3$ and \cite{N_2D}, \cite{AP}, \cite{Bu} in the case $n=2$. In particular, it is known that the set $C_{\gamma}^{\partial \Omega, \partial \Omega}$ determines uniquely a conductivity $\gamma \in C^1(\overline{\Omega})$ if $n \geq 3$ and a conductivity $\gamma \in L^{\infty}(\Omega)$ if $n = 2$. These results are based on the method of complex geometrical optics solutions developed in \cite{SU} for $n \geq 3$ and in \cite{N_2D}, \cite{Bu} in the case $n=2$.

The partial data question where the sets $\Gamma_D$ or $\Gamma_N$ may not be the whole boundary has also attracted considerable attention. We mention here four approaches, each of which gives a slightly different partial data result. Formulated in terms of the Schr\"odinger problem, it is known that $C_q^{\Gamma_D,\Gamma_N}$ determines $q$ in $\Omega$ in the following cases:
\begin{enumerate}
\item
$n \geq 3$, the set $\Gamma_D$ is possibly very small, and $\Gamma_N$ is slightly larger than $\partial \Omega \setminus \overline{\Gamma}_D$ (Kenig, Sj\"ostrand, and Uhlmann \cite{KSU}) \\
\item
$n \geq 3$ and $\Gamma_D = \Gamma_N = \Gamma$, and $\partial \Omega \setminus \Gamma$ is either part of a hyperplane or part of a sphere (Isakov \cite{I}) \\
\item
$n = 2$ and $\Gamma_D = \Gamma_N = \Gamma$, where $\Gamma$ can be an arbitrary open subset of $\partial \Omega$ (Imanuvilov, Uhlmann, and Yamamoto \cite{IUY}) \\
\item
$n \geq 2$, linearized partial data problem, $\Gamma_D = \Gamma_N = \Gamma$ where $\Gamma$ can be an arbitrary open subset of $\partial \Omega$ (Dos Santos, Kenig, Sj\"ostrand, and Uhlmann \cite{DKSjU09})
\end{enumerate}

Approaches (1)--(3) also give a partial data result of determining $\gamma$ from $C_{\gamma}^{\Gamma_D,\Gamma_N}$ with the same assumptions on the dimension and the sets $\Gamma_D, \Gamma_N$. In (4), the linearized partial data problem asks to show injectivity of the Fr\'echet derivative of $\Lambda_q$ at $q=0$ instead of injectivity of the full map $q \mapsto \Lambda_q$, when restricted to the sets $\Gamma_D$ and $\Gamma_N$.

It is interesting that although each of the four approaches is based on a version of complex geometrical optics solutions, the approaches are distinct in the sense that none of the above results is contained in any of the others. The result in \cite{KSU} uses Carleman estimates with boundary terms, given for special limiting weights, that allow to control the solutions on parts of the boundary, whereas \cite{I} is based on the full data arguments of \cite{SU} and a reflection argument. The work \cite{IUY} gives a strong result that only requires Dirichlet and Neumann data on any small set, but the method involves complex analysis and Carleman weights with critical points and does not obviously extend to higher dimensions. Finally, \cite{DKSjU09} is based on analytic microlocal analysis but is so far restricted to the linearized problem.

Nevertheless, given that there exist several approaches to the same problem, one expects that a combination of ideas from different approaches might lead to improved partial data results. In this paper we unify the Carleman estimate approach of \cite{KSU} and the reflection approach of \cite{I}, and in fact we obtain the main results of both \cite{KSU} and \cite{I} as a special case.

The method also allows to improve both approaches. Concerning \cite{I}, we are able to relax the hypothesis on the inaccessible part $\Gamma_i = \partial \Omega \setminus \Gamma$ of the boundary: instead of requiring $\Gamma_i$ to be completely flat (or spherical), we can deal with $\Gamma_i$ that satisfy a flatness condition only in one direction. Compared with \cite{KSU} we remove the need of measurements on certain parts of the boundary that are flat in one direction; and in certain cases where $\partial \Omega$ may not have any symmetries, we eliminate the overlap of $\Gamma_D$ and $\Gamma_N$ needed in \cite{KSU}. The method eventually boils down to inverting geodesic ray transforms (possibly for broken geodesics). In some cases the invertibility of the ray transform is known, but in other cases it is not and in these cases we obtain a reduction from the Calder\'on problem with partial data to integral geometry problems of independent interest.

We list here some further references for partial data results, first for the case $n \geq 3$. The Carleman estimate approach was initiated in \cite{BU} and \cite{KSU}. Based on this approach, there are low regularity results \cite{Knudsen}, \cite{Zhang}, results for other scalar equations \cite{DKSjU07}, \cite{KnudsenSalo}, \cite{Chung} and systems \cite{SaloTzou}, stability results \cite{HeckWang}, and reconstruction results \cite{NS}. The reflection approach was introduced in \cite{I}, and has been employed for the Maxwell system \cite{COS}. Partial data results for slab geometries are given in \cite{LiUhlmann}, \cite{KLU}. Also, just before this preprint was submitted, Imanuvilov and Yamamoto announced a partial data result for domains $Q = \Omega \times (0,l)$ stating that the Cauchy data set of a potential $q \in C^{\alpha}(Q)$ with Dirichlet and Neumann data restricted to $\Gamma \times (0,l) $, where $\Gamma \subset \partial \Omega$ is an open subset, determines the potential in the set $(\Omega \setminus \Omega_0) \times (0,l)$ where $\Omega_0$ is the convex hull of $\partial \Omega \setminus \Gamma$. This is similar to the results in Section \ref{subsec_cylindrical} in this paper.

In two dimensions, the main partial data result is \cite{IUY} which has been extended to more general equations \cite{IUY_general}, combinations of measurements on disjoint sets \cite{IUY_disjoint}, less regular coefficients \cite{IY_nonsmooth}, and some systems \cite{IY_systems}. An earlier result is in \cite{ALP}. In the case of Riemann surfaces with boundary, corresponding partial data results are given in \cite{GT}, \cite{GT_magnetic}, \cite{GT_system}.

In the case when the conductivity is known near the boundary, the partial data problem can be reduced to the full data problem  \cite{AU}, \cite{Alessandrini_partial}, \cite{HPS}. Also, we remark that in the corresponding problem for the wave equation, it has been known for a long time (see \cite{KKL}) that measuring the Dirichlet and Neumann data of waves on an arbitrary open subset of the boundary is sufficient to determine the coefficients uniquely up to natural gauge transforms. Recent partial results for the case where Dirichlet and Neumann data are measured on disjoint sets are in \cite{LO1}, \cite{LO2}.

The structure of this paper is as follows. Section \ref{sec_intro} is the introduction. Section \ref{sec_results} states the main partial data results in this paper in the setting of Riemannian manifolds, and Section \ref{sec_euclidean} considers some consequences for the Calder\'on problem with partial data in Euclidean space. Section \ref{sec_carleman} gives a Carleman estimate that is used to control solutions on parts of the boundary, and Section \ref{sec_reflection} discusses a reflection approach that can be used as an alternative to Carleman estimates in some cases. In Section \ref{sec_simple} we give the proofs of the local uniqueness results for simple transversal manifolds, based on complex geometrical optics solutions involving WKB type quasimodes. In Section \ref{sec_gaussian} we discuss a more sophisticated quasimode construction based on reflected Gaussian beams, and in Section \ref{sec_brokenraytransform} we show how complex geometrical optics solutions involving reflected Gaussian beam quasimodes can be used to recover the broken ray transform of a potential from partial Cauchy data.


\bigskip

\noindent {\bf Acknowledgements.} \ 
C.K.~is partly supported by NSF, and M.S.~is supported in part by the Academy of Finland and an ERC Starting Grant. M.S.~would like to thank David Dos Santos Ferreira, Yaroslav Kurylev, and Matti Lassas for several helpful discussions and for allowing us to use arguments from the paper \cite{DKLS}, which was in preparation simultaneously with this manuscript, involving Gaussian beam quasimodes and a reduction from the attenuated ray transform to the usual ray transform. M.S.~expresses his gratitude to the Department of Mathematics of the University of Chicago, where part of this work was carried out.

\section{Statement of results} \label{sec_results}

Our method is based on ideas developed for the anisotropic Calder\'on problem in \cite{DKSaU}, and even though much of the motivation comes from the Calder\'on problem with partial data in Euclidean domains, it is convenient to formulate our main results in the setting of manifolds. The Riemannian geometry notation used in this paper is mostly the same as in \cite{DKSaU}.

\begin{definition}
Let $(M,g)$ be a compact oriented Riemannian manifold with $C^{\infty}$ boundary, and let $n = \dim(M) \geq 3$.
\begin{enumerate}
\item[1.]
We say that $(M,g)$ is \emph{conformally transversally anisotropic} (or CTA) if 
$$
(M,g) \subset \subset (\mR \times \hat{M}_0, g), \quad g = c(e \oplus g_0)
$$
where $(\hat{M}_0,g_0)$ is some compact $(n-1)$-dimensional manifold with boundary, $e$ is the Euclidean metric on the real line, and $c$ is a smooth positive function in the cylinder $\mR \times \hat{M}_0$.

\item[2.]
We say that $(M,g)$ is \emph{admissible} if it is CTA and additionally the transversal manifold $(\hat{M}_0,g_0)$ is \emph{simple}, meaning that the boundary $\partial \hat{M}_0$ is strictly convex (the second fundamental form is positive definite) and for each $p \in \hat{M}_0$, the exponential map $\exp_p$ is a diffeomorphism from its maximal domain of definition in $T_p \hat{M}_0$ onto $\hat{M}_0$.
\end{enumerate}
\end{definition}

The uniqueness results in \cite{DKSaU} were given for admissible manifolds. In this paper we will give results both for admissible and CTA manifolds. In the main results, we will also assume that there is a compact $(n-1)$-dimensional manifold $(M_0,g_0)$ with smooth boundary such that 
\begin{equation} \label{m_mzero_mzerohat}
(M,g) \subset (\mR \times M_0, g) \subset \subset (\mR \times \hat{M}_0,g), \quad g = c(e \oplus g_0)
\end{equation}
and the following intersection is nonempty:
$$
\partial M \cap (\mR \times \partial M_0) \neq \emptyset.
$$
Under some conditions, it will be possible to ignore boundary measurements in the set $\partial M \cap (\mR \times \partial M_0)$. In the results below we will implicitly assume that the various manifolds satisfy \eqref{m_mzero_mzerohat}, and if $(M,g)$ is admissible it is also assumed that $(\hat{M}_0,g_0)$ is simple (but $(M_0,g_0)$ need not be simple, since its boundary may not be strictly convex).

Write $x = (x_1,x')$ for points in $\mR \times \hat{M}_0$ where $x_1$ is the Euclidean coordinate. The approaches of \cite{KSU}, \cite{DKSaU} are based on complex geometrical optics solutions of the form $u = e^{\tau \varphi}(m+r)$ where $\varphi$ is a special \emph{limiting Carleman weight}. We refer to \cite{DKSaU} for the definition and properties of limiting Carleman weights on manifolds. For present purposes we only mention that the functions $\varphi(x) = \pm x_1$ are natural limiting Carleman weights in the cylinder $(\mR \times \hat{M}_0, g)$.

The weight $\varphi(x) = x_1$ allows to decompose the boundary $\partial M$ as the disjoint union 
$$
\partial M = \partial M_+ \cup \partial M_- \cup \partial M_{\rm tan}
$$
where 
\begin{gather*}
\partial M_{\pm} = \{ x \in \partial M \,;\, \pm \partial_{\nu} \varphi(x) > 0 \}, \\
\partial M_{\rm tan} = \{ x \in \partial M \,;\, \partial_{\nu} \varphi(x) = 0 \}.
\end{gather*}
Here the normal derivative is understood with respect to the metric $g$. Note that $\partial_{\nu} \varphi = 0$ on $\mR \times \partial M_0$ whenever $(M_0,g_0) \subset \subset (\hat{M}_0, g_0)$. We think of $\partial M_{\rm tan}$ as being flat in one direction (the direction of the gradient of $\varphi$). For the sake of definiteness, the sets $\partial M_{\pm} = \partial M_{\pm}(\varphi)$ will refer to the weight $\varphi(x) = x_1$ in this section, but all results remain true when $\partial M_+$ and $\partial M_-$ are interchanged (this amounts to replacing the weight $x_1$ by $-x_1$).

Next we give the local results for the Calder\'on problem with partial data on manifolds. In these results we say that a unit speed geodesic $\gamma:[0,L] \to M_0$ is \emph{nontangential} if its endpoints are on $\partial M_0$, the vectors $\dot{\gamma}(0)$, $\dot{\gamma}(L)$ are nontangential, and $\gamma(t) \in M_0^{\text{int}}$ for $0 < t < L$. We also define the partial Cauchy data set as 
\begin{align*}
C_{g,q}^{\Gamma_D,\Gamma_N} = \{ (u|_{\Gamma_D}, \partial_{\nu} u|_{\Gamma_N}) &\,;\, (-\Delta_g+q) = 0 \text{ in } M, \ u \in H_{\Delta_g}(M), \\
 & \quad \supp(u|_{\partial M}) \subset \Gamma_D \}
\end{align*}
where $H_{\Delta_g}(M) = \{ u \in L^2(M) \,;\, \Delta_g u \in L^2(M) \}$ and $u|_{\partial M} \in H^{-1/2}(\partial M)$, $\partial_{\nu} u|_{\partial M} \in H^{-3/2}(\partial M)$  by the same arguments as in \cite{BU}.

To explain the results, it is convenient to think in terms of the following special case.

\begin{examplenonum}
Let $M$ be a compact manifold with boundary consisting of three parts, 
$$
M = M_{\text{left}} \cup M_{\text{mid}} \cup M_{\text{right}}
$$
where $M_{\text{mid}} = [a,b] \times M_0$ for some compact manifold $(M_0,g_0)$ with boundary, $M_{\text{left}} \subset \{ x_1 < a \} \times M_0$, and $M_{\text{right}} \subset \{ x_1 > b \} \times M_0$. We also assume that 
$$
\partial M_- = M_{\text{left}} \cap \partial M, \quad \partial M_+ = M_{\text{right}} \cap \partial M.
$$
In this case $\partial M_{\text{tan}} = [a,b] \times \partial M_0$.

The methods developed in this paper suggest that it should suffice to measure Neumann data on $\partial M_+$ for Dirichlet data supported in $\partial M_-$, with no measurements required on $\partial M_{\text{tan}}$. However, in the results below we need a part $\Gamma_a \subset \partial M_{\text{tan}}$ that is accessible to measurements, and $\Gamma_i = \partial M_{\text{tan}} \setminus \Gamma_a$ is the inaccessible part. Suppose for simplicity that 
$$
\Gamma_a = [a,b] \times E, \quad \Gamma_i = [a,b] \times (\partial M_0 \setminus E)
$$
for some nonempty open subset $E$ of $\partial M_0$.

In this setting, Theorem \ref{theorem_main_local_integrals} implies that from Neumann data measured near $\partial M_+ \cup \Gamma_a$ with Dirichlet data input near $\partial M_- \cup \Gamma_a$, one can determine certain integrals of the potential $q$ in the set 
$$
\mR \times \bigcup_{\gamma} \gamma([0,L])
$$
where the union is over all nontangential geodesics in $M_0$ with endpoints on $E$. Moreover, if the local ray transform is injective in this set in a suitable sense, then one can determine the potential in this set by Theorem \ref{theorem_main_local_result}. Theorem \ref{theorem_main_brokenrays} shows that one can go beyond this set and extract information about integrals of $q$ over all nontangential broken rays with endpoints on $E$, and Theorem \ref{theorem_main_global} gives a global uniqueness result in the case where $\partial M_{tan}$ has zero measure.
\end{examplenonum}

\begin{thm} \label{theorem_main_local_integrals}
Let $(M,g)$ be an admissible manifold as in \eqref{m_mzero_mzerohat}, and let $q_1, q_2 \in C(M)$. Let $\Gamma_i$ be a closed subset of $\partial M_{\rm tan}$, and suppose that for some nonempty open subset $E$ of $\partial M_0$ one has 
$$
\Gamma_i \subset \mR \times (\partial M_0 \smallsetminus E).
$$
Let $\Gamma_a = \partial M_{\rm tan} \smallsetminus \Gamma_i$, and assume that 
$$
C_{g,q_1}^{\Gamma_D, \Gamma_N} = C_{g,q_2}^{\Gamma_D, \Gamma_N}
$$
where $\Gamma_D$ and $\Gamma_N$ are any open sets in $\partial M$ such that  
$$
\Gamma_D \supset \partial M_- \cup \Gamma_a, \quad \Gamma_N \supset \partial M_+ \cup \Gamma_a.
$$

Given any nontangential geodesic $\gamma:[0,L] \to M_0$ with endpoints on $E$, and given any real number $\lambda$, one has 
$$
\int_0^L e^{-2\lambda t} (c(q_1-q_2))\ehat(2\lambda,\gamma(t)) \,dt = 0.
$$
Here $q_1-q_2$ is extended by zero outside $M$, and $(\,\cdot\,)\ehat$ denotes the Fourier transform in the $x_1$ variable.
\end{thm}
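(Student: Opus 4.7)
The plan is to derive the standard integral identity
\[
\int_M (q_1-q_2)\,u_1 u_2\,dV_g \;=\; \int_{\partial M \setminus \Gamma_N} \partial_\nu(u_1-\tilde u_2)\,u_2\,dS
\]
from the Cauchy data hypothesis, where $u_1\in H_{\Delta_g}(M)$ solves $(-\Delta_g+q_1)u_1=0$ with $\supp(u_1|_{\partial M})\subset\Gamma_D$, $\tilde u_2$ is the $q_2$-solution with $\tilde u_2|_{\partial M}=u_1|_{\partial M}$ (so $\partial_\nu(u_1-\tilde u_2)|_{\Gamma_N}=0$), and $u_2$ is any $q_2$-solution. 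Since $\Gamma_N\supset\partial M_+\cup\Gamma_a$, the boundary set on the right is contained in $\partial M_-\cup\Gamma_i$. The strategy is to insert CGO solutions $u_1, u_2$ engineered so that the left-hand side reproduces the geodesic integral of the theorem, the boundary integral over $\Gamma_i$ vanishes by support, and the boundary integral over $\partial M_-$ is absorbed as $\tau\to\infty$ by a boundary Carleman estimate.

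For the CGO I would use the limiting Carleman weight $\varphi(x)=x_1$ together with a transversal distance function $\psi(x')$ on a simple extension of $\hat M_0$, taking
\[
u_1 = e^{-\tau(x_1+i\psi)}\bigl(c^{-(n-2)/4}e^{-i\lambda x_1}\,a_1(x') + r_1\bigr),\quad u_2 = e^{\tau(x_1+i\psi)}\bigl(c^{-(n-2)/4}e^{-i\lambda x_1}\,a_2(x') + r_2\bigr).
\]
The WKB amplitudes $a_j$ are built by the quasimode construction of Section \ref{sec_simple}: the transport equation along the $\nabla\psi$-flow coupled with the $e^{-i\lambda x_1}$ phase produces, after localization to a nontangential geodesic $\gamma$ of $(\hat M_0,g_0)$, the attenuation factor $e^{-2\lambda t}$ in the product $a_1 a_2$ along $\gamma$. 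The crucial structural choice is to set up $\psi$ and $a_j$ in geodesic polar coordinates based at a point of (the extension of) $E$ and to cut off so that the transversal supports of $a_1, a_2$ lie in a tube around $\gamma$ disjoint from $\partial M_0\setminus E$; this ensures $u_j|_{\Gamma_i}\equiv 0$ up to $r_j$-contributions controlled by the Carleman estimate of Section \ref{sec_carleman}. The support condition $\supp(u_1|_{\partial M})\subset\Gamma_D$ is enforced by the standard KSU trick of adding a small Dirichlet-problem correction whose size is controlled by the same boundary Carleman estimate on $\partial M_+$.

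Substituting into the identity and letting $\tau\to\infty$, the factors $e^{\pm\tau(x_1+i\psi)}$ cancel in $u_1 u_2$, leaving $c^{-(n-2)/2}e^{-2i\lambda x_1}a_1 a_2$ plus cross terms $a_j r_k$ and $r_1 r_2$ whose $L^2$ norms vanish by the Carleman bound on $\|r_j\|_{L^2}$. Combined with $dV_g = c^{n/2}\,dx_1\,dV_{g_0}$, the left-hand side becomes
\[
\int_{M_0}(c(q_1-q_2))\ehat(2\lambda,x')\,a_1(x')a_2(x')\,dV_{g_0}(x'),
\]
while the right-hand side vanishes in the limit (the $\Gamma_i$ contribution identically by support, the $\partial M_-$ contribution via the boundary Carleman estimate and the favorable sign $\partial_\nu\varphi<0$). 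Finally, the concentration stage of the WKB construction localizes $a_1 a_2$ onto $\gamma$ and converts the transversal integral into $\int_0^L e^{-2\lambda t}(c(q_1-q_2))\ehat(2\lambda,\gamma(t))\,dt$. The main obstacle is designing the $a_j$ so as to simultaneously concentrate on an arbitrary nontangential $\gamma$ with endpoints on $E$ and have transversal support avoiding $\Gamma_i$; this is exactly what the simplicity of $\hat M_0$ purchases through the polar coordinate WKB construction, and where the assumption $\Gamma_i \subset \mR\times(\partial M_0\setminus E)$ is used in an essential way.
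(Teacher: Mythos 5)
Your plan follows the same core construction as the paper: the weight $\varphi=\pm x_1$, WKB quasimodes on a simple extension of the transversal manifold written in polar normal coordinates, an angular cutoff $b(\theta)$ forcing the quasimode to vanish on $\partial M_0\setminus E$, correction terms produced by the Carleman machinery of Section \ref{sec_carleman}, and concentration onto $\gamma$ in the limit. The organizational difference is that the paper builds \emph{both} solutions with prescribed boundary support ($\supp(u_1|_{\partial M})\subset\Gamma_D$, $\supp(u_2|_{\partial M})\subset\Gamma_N$) by using Proposition \ref{prop_carleman_solvability} to prescribe the trace of the conjugated correction on $S_-\cup S_0$, so that Proposition \ref{prop_integralidentity} yields $\int_M(q_1-q_2)u_1\overline{u_2}\,dV_g=0$ with no residual boundary term, whereas you keep a residual term over $\partial M\setminus\Gamma_N\subset\partial M_-\cup\Gamma_i$ and control only $u_1$, in the spirit of \cite{KSU}. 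Also, you encode $\lambda$ through an $e^{-i\lambda x_1}$ amplitude factor and an attenuated transport equation instead of the paper's complex frequency $s=\tau+i\lambda$ in the phase; this is an equivalent device, except that your ansatz produces $a_1a_2\propto e^{+2\lambda t}$, so the limit gives $\int_0^L e^{+2\lambda t}(c(q_1-q_2))\ehat(2\lambda,\gamma(t))\,dt=0$, which is the stated family only after reversing the geodesic or replacing $\lambda$ by $-\lambda$ (harmless, but the sign you assert is not what your construction gives).

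The genuine soft spot is the phrase that the boundary contributions of the corrections are ``controlled by the Carleman estimate.'' The estimates of Section \ref{sec_carleman} bound $\norm{r_j}_{L^2(M)}$ only; they say nothing about the traces $r_j|_{\partial M}$, and your residual term involves $u_2=e^{\tau(x_1+i\psi)}(\cdots+r_2)$ through its trace on $\partial M_-\cup\Gamma_i$. Without prescribing that trace, the $\Gamma_i$ integral does not ``vanish by support'' and the $\partial M_-$ integral cannot be closed by Cauchy--Schwarz, since you need $\norm{e^{-\tau x_1}u_2}_{L^2}$ on those sets. The fix is exactly the mechanism of Proposition \ref{prop_carleman_solvability}: for the weight $+x_1$ the prescription set $S_-\cup S_0$ contains $\partial M_-\cup\partial M_{\rm tan}$, so one can force $u_2$ (hence $r_2$) to vanish there up to a remainder that is only $o(1)$ in $L^2(M)$ -- at which point you have essentially reconstructed the paper's proof. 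Two further points need care. First, on $\Gamma_i\subset\partial M_{\rm tan}$ and on the near-tangential strip $\{-\delta<\partial_\nu x_1<0\}$, the boundary Carleman estimate (Proposition \ref{prop_carleman_second}) applied to $w=u_1-\tilde{u}_2$ gives only $\norm{e^{\tau x_1}\partial_\nu w}_{L^2}=O(1)$ (the $h^4$ term), not $o(1)$; the argument closes only through the two-parameter limit used in the paper ($\sigma(\{-\delta<\partial_\nu x_1<0\})\to 0$ as $\delta\to 0$, then $\tau\to\infty$), which your sketch omits. Second, enforcing $\supp(u_1|_{\partial M})\subset\Gamma_D$ cannot be done by ``adding a small Dirichlet-problem correction'': the data to be removed on $\partial M_+$ has size $e^{-\tau x_1}$, which is not small there ($\partial M_+$ is defined by the sign of $\partial_\nu x_1$, not of $x_1$), and the trace must also be killed near $\Gamma_i$, not just on $\partial M_+$; both are achieved only by the weighted solvability with prescribed data on $S_-\cup S_0$ (for the weight $-x_1$ this set contains $\partial M_+\cup\partial M_{\rm tan}$), together with the fact that the quasimode vanishes on $\partial M_{\rm tan}$ near $\Gamma_i$ so the prescribed data is supported on sets of small measure.
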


The previous theorem allows to conclude uniqueness of potentials in sets where the local ray transform is injective in the following sense.

\begin{definition}
Let $(M_0,g_0)$ be a compact oriented manifold with smooth boundary, and let $O$ be an open subset of $M_0$. We say that the local ray transform is injective on $O$, if any function $f \in C(M_0)$ with 
$$
\int_{\gamma} f \,dt = 0 \quad \text{for all nontangential geodesics $\gamma$ contained in $O$}
$$
must satisfy $f|_{O} = 0$.
\end{definition}

\begin{thm} \label{theorem_main_local_result}
Assume the conditions in Theorem \ref{theorem_main_local_integrals}. Then $q_1 = q_2$ in $M \cap (\mR \times O)$ for any open subset $O$ of $M_0$ such that the local ray transform is injective on $O$ and $O \cap \partial M_0 \subset E$.
\end{thm}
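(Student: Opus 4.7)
\emph{Strategy and setup.} The plan is to bootstrap Theorem~\ref{theorem_main_local_integrals} into a statement about the full $x_1$-Fourier transform of $c(q_1-q_2)$ on $O$ by differentiating the attenuated integral identity in $\lambda$ at $\lambda=0$ and iterating the local ray transform injectivity hypothesis. Set $f:=c(q_1-q_2)$, extended by zero outside $M$, so $f$ is bounded with compact $x_1$-support in $\mR\times M_0$; in particular $\tau\mapsto\hat f(\tau,x')$ is entire for each $x'$, and the functions $h_m(x'):=\partial_\lambda^m|_{\lambda=0}\hat f(2\lambda,x')$ are continuous on $M_0$ for every $m\geq 0$. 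Theorem~\ref{theorem_main_local_integrals} says that for every nontangential geodesic $\gamma:[0,L]\to M_0$ with endpoints on $E$ and every $\lambda\in\mR$,
\[
\int_0^L e^{-2\lambda t}\hat f(2\lambda,\gamma(t))\,dt = 0,
\]
and by entireness of both sides in $\lambda$ the identity extends to all $\lambda\in\mC$.

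\emph{Induction in $\lambda$.} Differentiating $k$ times in $\lambda$ at $\lambda=0$ yields the Leibniz relation
\[
\sum_{j=0}^k \binom{k}{j}(-2)^j\int_0^L t^j\,h_{k-j}(\gamma(t))\,dt = 0.
\]
Restrict now to nontangential geodesics $\gamma$ entirely contained in $O$; their endpoints lie in $O\cap\partial M_0\subset E$, so the identity applies. For $k=0$ it reads $\int_0^L h_0(\gamma(t))\,dt = 0$, i.e.\ the ordinary ray transform of $h_0\in C(M_0)$ vanishes along every nontangential geodesic inside $O$, and local injectivity on $O$ forces $h_0|_O=0$. Assuming inductively that $h_j|_O=0$ for every $j<k$, for each $j\geq 1$ in the Leibniz sum the factor $h_{k-j}\circ\gamma$ vanishes identically on $[0,L]$ (here it is essential that $\gamma$ is entirely contained in $O$), leaving $\int_0^L h_k(\gamma(t))\,dt=0$; another appeal to local injectivity yields $h_k|_O=0$.

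\emph{Conclusion and main obstacle.} Fix $x'\in O$. All Taylor coefficients of the entire function $\lambda\mapsto\hat f(2\lambda,x')$ at $\lambda=0$ vanish, so $\hat f(\tau,x')\equiv 0$ for $\tau\in\mR$; Fourier inversion in $x_1$ then gives $f\equiv 0$ on $\mR\times O$, and positivity of $c$ yields $q_1=q_2$ on $M\cap(\mR\times O)$. The step I expect to be delicate is the inductive one: it closes only because the lower-order terms in the Leibniz sum cancel pointwise along $\gamma$, which requires $\gamma$ to lie \emph{entirely} in $O$. This is precisely what the hypothesis $O\cap\partial M_0\subset E$ secures (the endpoints of such $\gamma$ are then accessible via $E$, so Theorem~\ref{theorem_main_local_integrals} is applicable), and it also explains why the conclusion yields $q_1=q_2$ on exactly the product $M\cap(\mR\times O)$ and not on any larger set.
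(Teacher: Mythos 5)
Your proposal is correct and follows essentially the same route as the paper: set $\lambda=0$ to kill the zeroth-order term via local injectivity, then differentiate the identity from Theorem \ref{theorem_main_local_integrals} in $\lambda$ at $\lambda=0$ and iterate, using that the lower-order terms vanish along geodesics contained in $O$, and finally invoke analyticity of the $x_1$-Fourier transform of the compactly supported function $c(q_1-q_2)$ plus Fourier inversion and positivity of $c$. Your explicit Leibniz bookkeeping and the remark on why $\gamma\subset O$ with $O\cap\partial M_0\subset E$ is needed just spell out details the paper leaves implicit.
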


The local ray transform is known to be injective in the next three cases (the second case will be used in Section \ref{sec_euclidean}):
\begin{enumerate}
\item[1.]
$(M_0,g_0) = (\overline{\Omega}_0,e)$ where $\Omega_0 \subset \mR^{n-1}$ is a bounded domain with $C^{\infty}$ boundary, $e$ is the Euclidean metric, $E$ is an open subset of $\partial \Omega_0$, and $O$ is the intersection of $\overline{\Omega}_0$ with the union of all hyperplanes in $\mR^{n-1}$ that have $\partial \Omega_0 \setminus E$ on one side. The complement of this union is the intersection of half-spaces and thus convex. If the integral of $f \in C(\overline{\Omega}_0)$, extended by zero to $\mR^{n-1}$, vanishes over all line segments in $O$, then the integral over all hyperplanes that do not meet $\partial \Omega_0 \setminus E$ also vanishes, and it follows from the Helgason support theorem \cite{Helgason} that the local ray transform is injective on $O$.
\item[2.]
$(M_0,g_0) \subset \subset (\tilde{M}_0,g_0)$ are simple manifolds with real-analytic metric, and $\tilde{\mathcal{F}}$ is an open set of nontangential geodesics in $(\tilde{M}_0,g_0)$ such that any curve in $\tilde{\mathcal{F}}$ can be deformed to a point on $\partial \tilde{M}_0$ through curves in $\tilde{\mathcal{F}}$. In such a case, by a result of \cite{K} the local ray transform is injective on the set $O$ of all points in $M_0$ that lie on some geodesic in $\mathcal{\tilde{F}}$.
\item[3.]
If $\dim(M_0) \geq 3$ and if $\partial M_0$ is strictly convex at a point $p \in \partial M_0$, then $p$ has a neighborhood $O$ in $M_0$ on which the local ray transform is injective. This is a very recent result from \cite{UhlmannVasy}.
\end{enumerate}

In Theorem \ref{theorem_main_local_result}, if the nontangential geodesics with endpoints on $E$ cover a dense subset $O$ of $M_0$ and if the local ray transform is injective in $O$, we obtain a global uniqueness result stating that $q_1=q_2$ in $M$. An example of such a result under a concavity assumption is given in Section \ref{subsec_isakov_extension}. The method for proving Theorems \ref{theorem_main_local_integrals} and \ref{theorem_main_local_result} also gives a global result when the set $\partial M_{tan}$ has zero measure. Then no geometric conditions are required and one can recover the potential in all of $M$.

\begin{thm} \label{theorem_main_global}
Let $(M,g)$ be an admissible manifold and assume that $q_1, q_2 \in C(M)$. 
If $\partial M_{tan}$ has zero measure in $\partial M$, then 
$$
C_{g,q_1}^{\partial M_- , \partial M_+} = C_{g,q_2}^{\partial M_-, \partial M_+} \implies q_1 = q_2.
$$
\end{thm}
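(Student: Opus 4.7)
\emph{Proof proposal.} The plan is to mimic the complex geometrical optics scheme behind Theorem \ref{theorem_main_local_integrals}, using the hypothesis that $\partial M_{\rm tan}$ has zero measure in $\partial M$ to bypass any splitting of $\partial M_{\rm tan}$ into accessible and inaccessible parts. First I would derive the usual integral identity: given $u_1 \in H_{\Delta_g}(M)$ solving $(-\Delta_g + q_1)u_1 = 0$ with $\supp(u_1|_{\partial M}) \subset \partial M_-$, the equality of Cauchy data sets furnishes $\tilde u_2$ solving $(-\Delta_g + q_2)\tilde u_2 = 0$ with $\tilde u_2|_{\partial M} = u_1|_{\partial M}$ and $\partial_\nu \tilde u_2|_{\partial M_+} = \partial_\nu u_1|_{\partial M_+}$. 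Setting $w = u_1 - \tilde u_2$, Green's identity combined with $w|_{\partial M} = 0$ and $\partial_\nu w|_{\partial M_+} = 0$ gives, for any solution $u_2$ of $(-\Delta_g + q_2)u_2 = 0$,
\[
\int_M (q_1 - q_2)\, u_1 u_2\, dV_g \;=\; \int_{\partial M_- \cup \partial M_{\rm tan}} (\partial_\nu w)\, u_2\, dS.
\]

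Next I would build CGO solutions relative to the limiting Carleman weight $\varphi(x) = x_1$, of the form
\[
u_1 = e^{-\tau(\varphi + i\psi)}(a_1 + r_1), \qquad u_2 = e^{\tau(\varphi - i\psi)}(a_2 + r_2),
\]
with WKB amplitudes $a_j$ concentrating on a prescribed nontangential geodesic $\gamma \subset \hat M_0$ and carrying a factor $e^{\mp\lambda x_1}$, as in \cite{DKSaU}. The Carleman estimate of Section \ref{sec_carleman} is used both to construct $u_1$ with Dirichlet support inside $\partial M_-$ and to produce the remainders $r_j$ with the usual decay $\|r_j\|_{L^2} = O(\tau^{-1/2})$.

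The key step is controlling the boundary integral on the right-hand side. The $\partial M_{\rm tan}$ contribution vanishes outright: $(\partial_\nu w) u_2$ has a well-defined $L^1$-trace on $\partial M$ (by elliptic regularity $w \in H^2$ near $\partial M$, while $u_2$ is smooth up to the boundary) and $\partial M_{\rm tan}$ has zero surface measure. For the $\partial M_-$ piece I would apply the boundary-term Carleman estimate to $w$, which satisfies $w|_{\partial M} = 0$ and $(-\Delta_g + q_1)w = (q_2 - q_1)\tilde u_2$, to obtain
\[
\tau \int_{\partial M_-} (-\partial_\nu \varphi)\, e^{2\tau\varphi}\, |\partial_\nu w|^2\, dS \;\lesssim\; \bigl\|e^{\tau\varphi}(q_1 - q_2)\tilde u_2\bigr\|_{L^2(M)}^2 = O(1),
\]
where the final bound holds because $\|e^{\tau\varphi}\tilde u_2\|_{L^2}$ is controlled by $\|e^{\tau\varphi}u_1\|_{L^2} = O(1)$ together with an interior Carleman estimate on $w$ (absorbed for $\tau$ large). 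Cauchy--Schwarz, combined with the CGO-side bound $\|e^{-\tau\varphi}u_2\|_{L^2(\partial M_-)} = O(1)$, then bounds the $\partial M_-$ integral by $O(\tau^{-1/2})$.

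Letting $\tau \to \infty$ and applying stationary phase along $\gamma$ yields
\[
\int_0^L e^{-2\lambda t}\, (c(q_1 - q_2))\ehat(2\lambda, \gamma(t))\, dt = 0
\]
for every nontangential geodesic $\gamma$ in $\hat M_0$ and every $\lambda \in \mR$. Since $(\hat M_0, g_0)$ is simple, its geodesic ray transform is injective, so each Fourier slice $(c(q_1 - q_2))\ehat(2\lambda, \cdot)$ vanishes on $\hat M_0$, and Fourier inversion in $x_1$ gives $q_1 = q_2$ on $M$. The main obstacle is the boundary analysis above: ensuring that the $L^1$-trace of $(\partial_\nu w) u_2$ on $\partial M$ is genuinely well-defined, so that the zero-measure set $\partial M_{\rm tan}$ can simply be discarded, and that the weighted Carleman boundary estimate on $\partial M_-$ remains effective near the interface $\partial M_- \cap \overline{\partial M_{\rm tan}}$, where the weight $-\partial_\nu\varphi$ degenerates.
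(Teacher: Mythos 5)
Your overall scheme is genuinely different from the paper's: you keep only $u_1$ with Dirichlet support in $\partial M_-$, pair against a free CGO $u_2$, and try to kill the leftover boundary term $\int_{\partial M_-\cup\,\partial M_{\rm tan}}(\partial_\nu w)\,u_2\,dS$ by a Bukhgeim--Uhlmann/KSU-type boundary Carleman estimate. The paper instead produces \emph{both} solutions with controlled boundary support, $\supp(\tilde u_1|_{\partial M})\subset\partial M_-$ and $\supp(\tilde u_2|_{\partial M})\subset\partial M_+$, by taking $f_-=-e^{-i\lambda x_1}v_s$ on all of $S_-\cup S_0$ in Proposition \ref{prop_carleman_solvability}; then Proposition \ref{prop_integralidentity} leaves no boundary term at all, and the hypothesis $\sigma(\partial M_{\rm tan})=0$ is used \emph{quantitatively} inside the construction, in the bound $\norm{f_-}_{L^2(S_0)}\lesssim \sigma(\{-1/(3\tau)<\partial_\nu x_1<0\})+\sigma(\partial M_{\rm tan})+\sigma(\{0<\partial_\nu x_1<\delta\})$, followed by the two-parameter limit $\delta\to0$, $\tau\to\infty$.

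The gap in your route is exactly the point you flag at the end, and it is not a technicality. After the boundary Carleman estimate, Cauchy--Schwarz on $\int_{\partial M_-}(\partial_\nu w)u_2\,dS$ forces the weight $(-\partial_\nu\varphi)^{-1}$ (or $|\partial_\nu\varphi_\eps|^{-1}$) onto the $u_2$ factor:
\[
\Bigl|\int_{\partial M_-}(\partial_\nu w)\,u_2\,dS\Bigr|\le\Bigl(\int_{\partial M_-}(-\partial_\nu\varphi)e^{2\tau\varphi}|\partial_\nu w|^2\,dS\Bigr)^{1/2}\Bigl(\int_{\partial M_-}\frac{e^{-2\tau\varphi}|u_2|^2}{-\partial_\nu\varphi}\,dS\Bigr)^{1/2},
\]
and the second factor is not $O(1)$: on the positive-measure collar $\{-\delta<\partial_\nu\varphi<0\}$ the weight $(-\partial_\nu\varphi)^{-1}$ is unbounded (indeed $\int_{\partial M_-}(-\partial_\nu\varphi)^{-1}dS$ may diverge), and the convexified estimate of Proposition \ref{prop_carleman_second} only gives the weaker factor $h^4$ there. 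So the claimed $O(\tau^{-1/2})$ bound from ``$\norm{e^{-\tau\varphi}u_2}_{L^2(\partial M_-)}=O(1)$'' does not follow; discarding $\partial M_{\rm tan}$ as a null set handles only the set where $\partial_\nu\varphi=0$ exactly, not its neighborhood inside $\partial M_-$ where the degeneration occurs. This is precisely why \cite{KSU} must measure Neumann data on a neighborhood of $\partial M_{\rm tan}$. To salvage your scheme you would need the zero-measure hypothesis to enter quantitatively (a $\delta$-splitting of $\partial M_-$ with $\norm{a_2}_{L^2(\{-\delta<\partial_\nu x_1<0\})}=o_{\delta\to0}(1)$ by continuity of the surface measure), together with control of the boundary trace of the remainder $r_2$ near $\partial M_{\rm tan}$, which the $L^2$-based solvability result of Proposition \ref{prop_carleman_solvability} does not supply for free --- in effect re-doing the bookkeeping the paper performs on the $u_1$-side, but now split between both sides of the pairing. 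A secondary point: your last step quotes injectivity of the (unattenuated) geodesic ray transform to conclude that each Fourier slice vanishes, but for $\lambda\neq0$ the identity involves the factor $e^{-2\lambda t}$, i.e.\ a constant-attenuation transform; the paper avoids invoking attenuated injectivity by differentiating repeatedly in $\lambda$ at $\lambda=0$ and using analyticity of the Fourier transform in $x_1$, and you should do the same.
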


Next we wish to gather information on the potentials beyond the set that can be reached by transversal geodesics with endpoints on $E$. To do this, we will use broken geodesics in the transversal manifold that go inside $M_0$, reflect finitely many times and eventually return to $E$.

\begin{definition}
Let $(M_0,g_0)$ be a compact manifold with boundary.
\begin{itemize}
\item[(a)]
We call a continuous curve $\gamma: [a,b] \to M_0$ a \emph{broken ray} if $\gamma$ is obtained by following unit speed geodesics that are reflected according to geometrical optics (angle of incidence equals angle of reflection) whenever they hit a point of $\partial M_0$.
\item[(b)]
A broken ray $\gamma: [0,L] \to M_0$ is called \emph{nontangential} if $\dot{\gamma}(t)$ is nontangential whenever $\gamma(t) \in \partial M_0$, and additionally all points of reflection are distinct.
\end{itemize}
\end{definition}

The next theorem is a generalization of Theorem \ref{theorem_main_local_integrals} in the sense that it allows arbitrary transversal manifolds and recovers integrals over all nontangential broken rays (instead of just nontangential geodesics) with endpoints on $E$. However, it is stated with a weaker partial data condition.

\begin{thm} \label{theorem_main_brokenrays}
Let $(M,g)$ be a CTA manifold as in \eqref{m_mzero_mzerohat}, and let $q_1, q_2 \in C(M)$. Let $\Gamma_i$ be a closed subset of $\partial M_{\rm tan}$, and suppose that for some nonempty open subset $E$ of $\partial M_0$ one has 
$$
\Gamma_i \subset \mR \times (\partial M_0 \smallsetminus E).
$$
Let $\Gamma_a = \partial M_{\rm tan} \smallsetminus \Gamma_i$, and assume that 
$$
C_{g,q_1}^{\Gamma_D, \Gamma_N} = C_{g,q_2}^{\Gamma_D, \Gamma_N}
$$
where 
$$
\Gamma_D =  \Gamma_N = \Gamma
$$
for some neighborhood $\Gamma$ of the set $\overline{\partial M_+ \cup \partial M_- \cup \Gamma_a}$ in $\partial M$.

Given any nontangential broken ray $\gamma:[0,L] \to M_0$ with endpoints on $E$, and given any real number $\lambda$, one has 
$$
\int_0^L e^{-2\lambda t} (c(q_1-q_2))\ehat(2\lambda,\gamma(t)) \,dt = 0.
$$
Here $q_1-q_2$ is extended by zero outside $M$, and $(\,\cdot\,)\ehat$ denotes the Fourier transform in the $x_1$ variable.
\end{thm}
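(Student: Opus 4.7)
The plan is to follow the strategy used for Theorem \ref{theorem_main_local_integrals}, but to replace the WKB quasimodes (which require the transversal manifold $\hat M_0$ to be simple) by reflected Gaussian beam quasimodes that can travel along a transversal broken ray.

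First I would convert the partial Cauchy data hypothesis into an integral identity. Let $u_1$ be a solution of $(-\Delta_g + q_1)u_1 = 0$ whose Dirichlet trace is supported in the given neighborhood $\Gamma$ of $\overline{\partial M_+ \cup \partial M_- \cup \Gamma_a}$. By the assumption $C_{g,q_1}^{\Gamma,\Gamma} = C_{g,q_2}^{\Gamma,\Gamma}$, there exists $\tilde u_2$ solving $(-\Delta_g+q_2)\tilde u_2 = 0$ with the same Dirichlet data as $u_1$ and matching Neumann data on $\Gamma$. Setting $w = u_1 - \tilde u_2$, the function $w$ vanishes on $\partial M$ and has normal derivative supported in $\partial M \smallsetminus \Gamma \subset \Gamma_i$. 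Pairing against any solution $u_2$ of $(-\Delta_g+q_2)u_2 = 0$ via Green's identity gives
$$\int_M (q_1-q_2)\, u_1\, u_2 \, dV_g \;=\; -\int_{\Gamma_i}(\partial_\nu w)\, u_2 \, dS,$$
and since $\Gamma_i \subset \mR \times (\partial M_0 \smallsetminus E)$, the task reduces to constructing CGOs $u_1, u_2$ such that the left side converges to the desired broken ray integral while $u_2|_{\Gamma_i}$ is small enough to kill the boundary term.

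For the CGO construction I would use the limiting Carleman weight $\varphi(x) = x_1$ on the cylinder $\mR\times\hat M_0$, together with a reflected Gaussian beam quasimode $v = v_\lambda^{(\tau)}$ on the transversal manifold built as in Section \ref{sec_gaussian}. The quasimode is assembled by concatenating single-segment Gaussian beams along the successive geodesic pieces of $\gamma$, matching phase and transport data at each reflection point by the equal-angle reflection law, so that the total $v_\lambda^{(\tau)}$ is an approximate eigenfunction of $-\Delta_{g_0}+\lambda^2$ to arbitrary polynomial order in $1/\tau$ and $|v_\lambda^{(\tau)}|^2$ concentrates as a delta measure along $\gamma$ as $\tau\to\infty$. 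The CGO solutions then take the form
$$u_1 = e^{-\tau \varphi}\,c^{-(n-2)/4}\bigl(v_\lambda^{(\tau)} + r_1\bigr), \qquad u_2 = e^{\tau \varphi}\,c^{-(n-2)/4}\bigl(\overline{v_\lambda^{(\tau)}} + r_2\bigr),$$
with remainders $r_j \in H^1(M)$ produced by the boundary Carleman estimates of Section \ref{sec_carleman} (together with the reflection argument of Section \ref{sec_reflection}); these satisfy $\tau\|r_j\|_{L^2(M)} \to 0$ and are arranged so that $\supp(u_1|_{\partial M}) \subset \Gamma$. Substituted into the integral identity, the prefactors $e^{\pm\tau\varphi}$ cancel, and extending $q_1-q_2$ by zero outside $M$, integrating in $x_1$ first, and letting $\tau\to\infty$ so that the Gaussian concentration collapses the transversal integral to one along $\gamma$, one obtains, up to a nonzero normalizing constant,
$$\int_0^L e^{-2\lambda t}\,(c(q_1-q_2))\ehat(2\lambda,\gamma(t))\,dt,$$
the factor $e^{-2\lambda t}$ arising as in Theorem \ref{theorem_main_local_integrals} from the amplitude and phase prefactors the Gaussian beam carries along its central curve.

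The main obstacle is controlling the boundary contribution on $\Gamma_i$. The set $\Gamma_i$ lies inside $\mR\times(\partial M_0\smallsetminus E)$, which is exactly where the reflection points of the broken ray $\gamma$ accumulate, so $|v_\lambda^{(\tau)}|$ is not small there by itself. The essential mechanism is that at each reflection point the incoming and outgoing Gaussian packets combine into a joint boundary trace on $\partial M_0\smallsetminus E$ that is negligible when paired with $\partial_\nu w$; this cancellation is the content of the matching conditions in the reflected Gaussian beam construction, and it must be quantitatively strong enough to absorb the $e^{\tau\varphi}$-growth of the Carleman weight, uniformly over the finitely many reflections of a nontangential broken ray. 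Making this precise is the heart of Section \ref{sec_brokenraytransform} and the most delicate step of the proof.
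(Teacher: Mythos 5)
Your overall architecture (reflected Gaussian beam quasimodes on the transversal manifold, CGOs $e^{\mp s x_1}(v_s+r_j)$ with corrections from the boundary Carleman estimate, concentration of $\abs{v_s}^2$ to $e^{-2\lambda t}\delta_\gamma$) matches the paper's proof. The divergence — and the genuine gap — is in how the partial data hypothesis is used. The paper never produces a boundary remainder at all: it constructs \emph{both} solutions so that their traces vanish identically on $\Gamma_i$, by prescribing the boundary values of the correction terms in Proposition \ref{prop_carleman_solvability} (take $f_- = -e^{-i\lambda x_1}v_s$ on $\Gamma_i$, $0$ elsewhere on $S_-\cup S_0$; this is admissible because $\Gamma_i\subset\partial M_{\rm tan}\subset S_0$ and costs only $O(\tau^{-K})$ in the solvability estimate, precisely because the reflected quasimode is $O(\tau^{-K})$ on $\mR\times(\partial M_0\setminus E)$). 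Then Proposition \ref{prop_integralidentity} applies with $\Gamma_D=\Gamma_N=\Gamma$ and the identity $\int_M(q_1-q_2)u_1\overline{u_2}\,dV_g=0$ holds exactly, with no term on $\Gamma_i$ to estimate.

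In your version, only $u_1$ has trace supported in $\Gamma$, and you are left with the term $\int_{\Gamma_i}(\partial_\nu w)\,u_2\,dS$, which you claim is killed because the quasimode's boundary trace is "negligible when paired with $\partial_\nu w$". This is where the argument breaks as written: nothing in your construction controls $\partial_\nu w$ on $\Gamma_i$, and a priori it carries the weight $e^{-\tau x_1}$ times an unbounded factor, while $u_2$ carries $e^{+\tau x_1}$; smallness of $v_s$ on $\partial M_0\setminus E$ alone does not close this. To salvage your route you would need two additional ingredients you do not supply: (i) a KSU-type application of the boundary Carleman estimate (Proposition \ref{prop_carleman_second}) to $w$ itself, using $\partial_\nu w|_{\Gamma}=0$ and $w|_{\partial M}=0$, to obtain a polynomial-in-$\tau$ bound on $\norm{e^{\tau x_1}\partial_\nu w}_{L^2(\Gamma_i)}$ (note the estimate on $\partial M_{\rm tan}$ only comes with the weaker $h^4$ factor, so this must be tracked quantitatively against the $O(\tau^{-K})$ gain); and (ii) control of $r_2|_{\Gamma_i}$, which again forces you to prescribe boundary data for $r_2$ in the solvability step — at which point you are essentially back to the paper's construction. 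Also, the reflection argument of Section \ref{sec_reflection} plays no role here, and the claim $\tau\norm{r_j}_{L^2(M)}\to 0$ is stronger than what the construction yields ($\norm{r_j}=O(\tau^{-1})$) and stronger than needed; only $\norm{r_j}\to 0$ is required to discard the cross terms.
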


It is natural to ask whether a function in $M_0$ is determined by its integrals over broken rays with endpoints in some subset $E$ of $\partial M_0$ (that is, whether the broken ray transform is injective). Combined with Theorem \ref{theorem_main_brokenrays} and with the proof of Theorem \ref{theorem_main_local_result}, such a result would imply unique recovery of the potential in the whole manifold $M$. However, it seems that there are very few results in this direction, except for the case where $E$ is the whole boundary and the question reduces to the injectivity of the usual ray transform (see \cite{S}).

Eskin \cite{Eskin_obstacles} has proved injectivity in the case of Euclidean broken rays reflecting off several convex obstacles, with $E$ being the boundary of a smooth domain enclosing all the obstacles, if the obstacles satisfy additional restrictions (in particular the obstacles must have corner points and they cannot be smooth). Ilmavirta \cite{Ilmavirta} has recently given partial injectivity results for the broken ray transform in the Euclidean unit disc. See also \cite{Florescu}, \cite{Lozev} for related results. However, the following question seems to be open even in convex Euclidean domains except when $E=\partial M_0$.

\begin{question}
Let $(M_0,g_0)$ be a simple manifold, let $E$ be a nonempty open subset of $\partial M_0$, and assume that $f \in C(M_0)$ satisfies 
$$
\int_0^L f(\gamma(t)) \,dt = 0
$$
for all nontangential broken rays $\gamma: [0,L] \to M_0$ with endpoints on $E$. Does this imply that $f = 0$?
\end{question}

\section{The Euclidean case} \label{sec_euclidean}

In this section, we indicate some consequences of the previous results to the Calder\'on problem with partial data in Euclidean space. We assume that $\Omega \subset \mR^3$ is a bounded domain with smooth boundary equipped with the Euclidean metric $g = e$, and $q_1, q_2 \in C(\overline{\Omega})$. We also assume that 
$$
C_{q_1}^{\Gamma, \Gamma} = C_{q_2}^{\Gamma,\Gamma}
$$
where $\Gamma$ is some strict open subset of $\partial \Omega$. Write 
$$
\Gamma_i = \partial \Omega \setminus \Gamma
$$
for the inaccessible part of the boundary. The results in this section show that in cases where $\Gamma_i$ satisfies certain geometric restrictions, it is possible to conclude that 
$$
q_1 = q_2 \text{ in } \overline{\Omega} \cap (\mR \times O)
$$
where the sets $O \subset \mR^2$ will be described below.

\begin{remark}
We also obtain results for the conductivity equation by making a standard reduction to the Schr\"odinger equation. More precisely, if $\gamma_1, \gamma_2 \in C^2(\overline{\Omega})$ are positive functions such that $C_{\gamma_1}^{\Gamma, \Gamma} = C_{\gamma_2}^{\Gamma,\Gamma}$, then the corresponding DN maps satisfy 
$$
\Lambda_{\gamma_1} f|_{\Gamma} = \Lambda_{\gamma_2} f|_{\Gamma} \text{ for } f \in H^{1/2}(\partial \Omega) \text{ with } \supp(f) \subset \Gamma.
$$
Boundary determination \cite{KV}, \cite{SU_boundary} implies that 
$$
\gamma_1|_{\Gamma} = \gamma_2|_{\Gamma}, \quad \partial_{\nu} \gamma_1|_{\Gamma} = \partial_{\nu} \gamma_2|_{\Gamma}.
$$
Writing $q_j = \Delta \gamma_j^{1/2}/\gamma_j^{1/2}$, the relation 
$$
\Lambda_{q_j} f = \gamma_j^{-1/2} \Lambda_{\gamma_j}(\gamma_j^{-1/2} f) + \frac{1}{2} \gamma_j^{-1} (\partial_{\nu} \gamma_j) f\Big |_{\partial \Omega}
$$
and the above conditions imply that the DN maps $\Lambda_{q_j}$ for the Schr\"odinger equations satisfy 
$$
\Lambda_{q_1} f|_{\Gamma} = \Lambda_{q_2} f|_{\Gamma} \text{ for } f \in H^{1/2}(\partial \Omega) \text{ with } \supp(f) \subset \Gamma.
$$
Thus $C_{q_1}^{\Gamma, \Gamma} = C_{q_2}^{\Gamma,\Gamma}$, and we obtain that 
$$
q_1 = q_2 \text{ in } \overline{\Omega} \cap (\mR \times O).
$$
Write $q = q_1 = q_2$ in $\overline{\Omega} \cap (\mR \times O)$. Then $\gamma_1^{1/2}$ and $\gamma_2^{1/2}$ are both solutions of $(-\Delta+q) u = 0$ in $\overline{\Omega} \cap (\mR \times O)$ having identical Cauchy data on $\Gamma$. It follows that $\gamma_1 = \gamma_2$ in any connected component of $\overline{\Omega} \cap (\mR \times O)$ whose intersection with $\Gamma$ contains a nonempty open subset of $\partial \Omega$.
\end{remark}

In the following we will use some general facts on limiting Carleman weights from \cite{DKSaU}, where it was proved that any limiting Carleman weight in $\mR^3$ has, up to translation, rotation and scaling, one of the following six forms:
$$
x_1, \ \log \,\abs{x}, \ \text{arg}(x_1+ix_2), \ \frac{x_1}{\abs{x}^2}, \ \log \,\frac{\abs{x+e_1}^2}{\abs{x-e_1}^2}, \ \text{arg}(e^{i\theta}(x+i e_1)^2).
$$
Here $\theta \in [0,2\pi)$, and the argument function is defined by 
$$
\text{arg}(z) = 2 \arctan \frac{\im(z)}{\abs{z}+\re(z)}, \quad z \in \mC \setminus \{ t \in \mR \,;\, t \leq 0 \}.
$$
It was also proved in \cite[Section 2]{DKSaU} that if $\phi$ is a limiting Carleman weight near $(\overline{\Omega},e)$, then $\nabla_{\tilde{g}} \phi$ is a unit parallel vector field near $(\overline{\Omega}, \tilde{g})$ where 
$$
\tilde{g} = c^{-1} e, \quad c = \abs{\nabla_e \phi}_e^{-2}.
$$
Furthermore, by the proof of \cite[Lemma A.5]{DKSaU}, if $(y_1,y')$ are coordinates so that $\nabla_{\tilde{g}} \phi = \partial_{y_1}$ and if the coordinates $y'$ parametrize a $2$-dimensional manifold $S$ such that $\nabla_{\tilde{g}} \phi$ is orthogonal to $S$ with respect to the $\tilde{g}$ metric, then the metric has the form 
$$
\tilde{g}(y_1,y') = \left( \begin{array}{cc} 1 & 0 \\ 0 & \tilde{g}_0(y') \end{array} \right)
$$
where $\tilde{g}_0$ is the metric on $S$ induced by $\tilde{g}$.

\subsection{Cylindrical sets} \label{subsec_cylindrical}

This case corresponds to the limiting Carleman weight $\varphi(x) = x_1$. Suppose that $\Omega \subset \mR \times \Omega_0$, where $\Omega_0$ is a bounded domain with smooth boundary in $\mR^2$. Let $E$ be an open subset of $\partial \Omega_0$, and assume that 
$$
\Gamma_i \subset \mR \times (\partial \Omega_0 \setminus E).
$$
If $\Omega_0$ has strictly convex boundary, Theorem \ref{theorem_main_local_result} and the result of \cite{K} imply that 
$$
q_1 = q_2 \text{ in } \overline{\Omega} \cap (\mR \times O)
$$
where $O$ is the intersection of $\overline{\Omega}_0$ with the union of all lines in $\mR^2$ that have $\partial \Omega_0 \setminus E$ on one side.

The above conclusion holds true also when $\Omega_0$ does not have strictly convex boundary. To see this, let $\Omega_0 \subset \subset B \subset \subset \tilde{B}$ where $B$ and $\tilde{B}$ are balls. The extensions of the line segments in $O$ to $\tilde{B}$ form a class $\tilde{\mathcal{F}}$ such that any curve in $\tilde{\mathcal{F}}$ can be deformed to a point through curves in $\tilde{\mathcal{F}}$. It is then enough to extend $q_1-q_2$ by zero to $\mR \times \overline{B}$, and to use the proof of Theorem \ref{theorem_main_local_result} with $M_0$ replaced by $\overline{B}$ together with the result \cite{K}.

\subsection{Conical sets}

Consider the limiting Carleman weight $\phi(x) = \log \,\abs{x}$. Suppose that $\overline{\Omega} \subset \{ x_3 > 0 \}$, let $(S^2,g_0)$ be the sphere with its standard metric, let $S^2_+ = \{ \omega \in S^2 \,;\, \omega_3 > 0 \}$, and let $(M_0,g_0)$ be a compact submanifold of $(S^2_+,g_0)$ with smooth boundary. Let $E$ be an open subset of $\partial M_0$, and assume that 
$$
\Gamma_i \subset \{ r\omega \,;\, r > 0, \omega \in \partial M_0 \setminus E \}.
$$
We have $c = \abs{\nabla \phi}^{-2} = \abs{x}^2$ and $\tilde{g} = \abs{x}^{-2} e$, $\nabla_{\tilde{g}} \phi = x$. Choose coordinates so that 
$$
y_1 = \log\,\abs{x}, \quad y' = x/\abs{x}.
$$
The coordinates $y'$ parametrize the manifold $S^2$ and the metric $\tilde{g}_0$ on $S^2$ induced by $\tilde{g}$ is just the standard metric $g_0$. The discussion in the beginning of this section shows that 
$$
\tilde{g}(y_1,y') = \left( \begin{array}{cc} 1 & 0 \\ 0 & g_0(y') \end{array} \right).
$$
Now $(M_0,g_0)$ is contained in some simple submanifold $(\hat{M}_0,g_0)$ of the hemisphere $(S^2_+,g_0)$ (just remove a neighborhood of the equator). Since geodesics in $S^2_+$ are restrictions of great circles, Theorem \ref{theorem_main_local_result} and the local injectivity result \cite{K} imply as in Section \ref{subsec_cylindrical} that 
$$
q_1 = q_2 \text{ in } \overline{\Omega} \cap \{ r \omega \,;\, r > 0, \omega \in O \}
$$
where $O$ is the union of all great circle segments in $S^2_+$ such that $\partial M_0 \setminus E$ is on one side of the hyperplane containing the great circle segment.

\subsection{Surfaces of revolution}

Let $\Omega \subset \mR^3 \setminus \{ x \,;\, x_1 \leq 0 \}$, and consider the limiting Carleman weight 
$$
\phi(x) = \text{arg}(x_1+i x_2).
$$
Then $\nabla \phi = ( \frac{-x_2}{x_1^2+x_2^2}, \frac{x_1}{x_1^2+x_2^2}, 0)$ and 
$$
c = x_1^2 + x_2^2, \quad \tilde{g} = \frac{1}{x_1^2+x_2^2} e, \quad \nabla_{\tilde{g}} \phi = (-x_2,x_1,0).
$$
We make the change of coordinates valid near $\overline{\Omega}$, 
$$
y_1 = \text{arg}(x_1+i x_2), \ \ y_2 = \sqrt{x_1^2+x_2^2}, \ \ y_3 = x_3.
$$
The coordinates $y'$ parametrize the manifold $S = \{ (x_1,0,x_3) \,;\, x_1 > 0 \}$ and $\nabla_{\tilde{g}} \phi$ is orthogonal to $S$. Furthermore, we may also think of $S$ as the set $\{ (0,y_2,y_3) \,;\, y_2 > 0 \}$, and the metric on $S$ induced by $\tilde{g}$ is the hyperbolic metric $\tilde{g}_0 = \frac{1}{y_2^2} e$. The discussion in the beginning of this section shows that 
$$
\tilde{g}(y_1,y') = \left( \begin{array}{cc} 1 & 0 \\ 0 & \tilde{g}_0(y') \end{array} \right).
$$

Let $(M_0,g_0)$ be a compact submanifold of $S$ with smooth boundary, let $E$ be an open subset of $\partial M_0$. We think of $M_0$ as lying in $\{ (x_1,0,x_3) \,;\, x_1 > 0 \}$. Now, assume that 
$$
\Gamma_i \subset \{ R_{\theta}(\partial M_0 \setminus E) \,;\, \theta \in (-\pi, \pi) \}
$$
where $R_{\theta}x = (\tilde{R}_{\theta}(x_1,x_2)^t, x_3)^t$ and $\tilde{R}_{\theta}$ rotates vectors in $\mR^2$ by angle $\theta$ counterclockwise. That is, we assume that the inaccessible part $\Gamma_i$ is contained in a surface of revolution obtained by rotating the boundary curve $\partial M_0 \setminus E$.

Now, the geodesics in $S$ (and, after restriction, also in $M_0$) have either the form 
$$
(y_2(t), y_3(t)) = (R \sin t, R \cos t + \alpha)
$$
where $t \in (0,\pi)$, $R > 0$, and $\alpha \in \mR$, or the form $(y_2(t), y_3(t)) = (t,\alpha)$ where $t > 0$ and $\alpha \in \mR$ (these are not unit speed parametrizations). In the $x$ coordinates, these are either the half circles in the $\{x_2 = 0\}$ plane given by 
$$
(x_1(t), x_2(t), x_3(t)) = (R \sin t, 0, R \cos t + \alpha), \quad t \in (0,\pi)
$$
or the lines 
$$
(x_1(t), x_2(t), x_3(t)) = (t, 0, \alpha), \quad t > 0.
$$
Enclosing $M_0$ in some ball $B$ in $S$, the manifold $(\overline{B},g_0)$ is simple and Theorem \ref{theorem_main_local_result} and \cite{K} imply as in Section \ref{subsec_cylindrical} that 
$$
q_1 = q_2 \text{ in } \overline{\Omega} \cap \{ R_{\theta}(O) \,;\, \theta \in (-\pi,\pi) \}
$$
where $O$ is the union of all geodesics in $S$ that have $\partial M_0 \setminus E$ on one side.

\subsection{Other limiting Carleman weights} \label{subsec_other}

So far we have considered three of the six possible forms of limiting Carleman weights in $\mR^3$. The fourth one, $\phi(x) = \frac{x_1}{\abs{x}^2}$, is the Kelvin transform of the linear weight, and corresponds to inaccessible parts of the boundary that are Kelvin transforms of cylindrical domains. In particular, if part of the cylindrical domain is on the hyperplane $\{x_3 = 1\}$, its Kelvin transform lies on the sphere centered at $(0,0,1/2)$ with radius $1/2$, and we recover the result of Isakov \cite{I} for domains where the inaccessible part is part of a sphere. The corresponding results for the remaining two limiting Carleman weights do not seem so easy to state and we omit them.

\subsection{Extension of \cite{KSU}}

Let now $\Omega  \subset \mR^3$ be a bounded domain with smooth boundary, assume that $0$ is not in the convex hull of $\overline{\Omega}$, and let $\phi(x) = \log\,\abs{x}$. Define 
\begin{align*}
\partial \Omega_{\pm} &= \{ x \in \partial \Omega \,;\, \pm \partial_{\nu} \phi(x) > 0 \}, \\
\partial \Omega_{tan} &= \{ x \in \partial \Omega \,;\, \partial_{\nu} \phi(x) = 0 \}.
\end{align*}
It was proved in \cite{KSU} that whenever $\Gamma_D$ is a neighborhood of $\partial \Omega_- \cup \partial \Omega_{tan}$ and $\Gamma_N$ is a neighborhood of $\partial \Omega_+ \cup \partial \Omega_{tan}$, then 
$$
C_{q_1}^{\Gamma_D, \Gamma_N} = C_{q_2}^{\Gamma_D, \Gamma_N}  \implies q_1 = q_2.
$$
In particular, $\Gamma_D$ and $\Gamma_N$ always need to overlap. This result is a consequence of the reduction given above for the logarithmic weight, Theorem \ref{theorem_main_local_integrals} (the special case where $E = \partial \Omega_0$, so that $\Gamma_i = \emptyset$), and injectivity of the ray transform. If $\partial \Omega_{tan}$ has zero measure in $\partial \Omega$, then Theorem \ref{theorem_main_global} allows to improve this result: we have 
$$
C_{q_1}^{\partial \Omega_-, \partial \Omega_+} = C_{q_2}^{\partial \Omega_-, \partial \Omega_+}  \implies q_1 = q_2.
$$
In this case, the sets where Dirichlet and Neumann data are measured are disjoint, but their union covers all of $\partial \Omega$ except for a set of measure zero. The result remains true if the roles of $\partial \Omega_+$ and $\partial \Omega_-$ are changed.

\subsection{Extension of \cite{I}} \label{subsec_isakov_extension}

The results in \cite{I} stated that $C_{q_1}^{\Gamma, \Gamma} = C_{q_2}^{\Gamma,\Gamma}$ implies $q_1 = q_2$ in $\Omega$ if $\Omega \subset \{x_3 > 0\}$ and $\Gamma_i \subset \{ x_3 = 0 \}$, or if $\Omega \subset B$ for some ball $B$ and $\Gamma_i \subset \partial B$. We have already recovered these results in Sections \ref{subsec_cylindrical} and \ref{subsec_other}, since in these cases the local injectivity set $O$ is so large that the result $q_1 = q_2$ holds in all of $\Omega$. Of course, the results above also extend \cite{I} since we can conclude at least local uniqueness for potentials when the inaccessible part of the boundary satisfies a (conformal) flatness condition in only one direction, such as being part of a cylindrical set, a conical set, or a surface of revolution.

We also get global uniqueness if the local injectivity set $O$ is sufficiently large. For instance, if 
$$
\Omega \subset \mR \times \Omega_0, \quad \Gamma_i \subset \mR \times (\partial \Omega_0 \setminus E)
$$
where $\Omega_0$ is a bounded domain with smooth boundary and $E$ is a nonempty open subset of $\partial \Omega_0$, and if the lines in $\mR^2$ that have $\partial \Omega_0 \setminus E$ on one side cover a dense subset of $\Omega_0$, then we have $q_1 = q_2$ in $\Omega$. One example of this situation is if 
$$
\Omega \subset \mR \times \{ (x_2, x_3) \,;\, x_3 > \eta(x_2) \}, \quad \Gamma_i \subset \mR \times \{ (x_2, x_3) \,;\, x_3 = \eta(x_2) \}
$$
where $\eta: \mR \to \mR$ is a smooth concave function.

\section{Carleman estimate} \label{sec_carleman}

Let $(M,g)$ be a CTA manifold, so $(M,g)$ is compact with boundary and 
$$
(M,g) \subset \subset (\mR \times M_0, g), \qquad g = c(e \oplus g_0).
$$
Here $(M_0,g_0)$ is any compact $(n-1)$-dimensional manifold with boundary. We wish to prove a Carleman estimate with boundary terms for the conjugated operator $e^{\phi/h}(-\Delta_g)e^{-\phi/h}$ in $M$, where $\phi$ is the limiting Carleman weight $\phi(x) = x_1$ or $\phi(x) = -x_1$, and $h > 0$ is small. Following \cite{KSU}, it is useful to consider a slightly modified weight 
$$
\phi_{\eps} = \phi + h f_{\eps}
$$
where $f_{\eps}$ is a smooth real valued function in $M$ depending on a small parameter $\eps$, with $\eps$ independent of $h$. The convexity of $f_{\eps}$ will lead to improved lower bounds in terms of the $L^2(M)$ norms of $u$ and $h \nabla u$. On the other hand, the sign of $\partial_{\nu} \phi_{\eps}$ in the boundary term of the Carleman estimate will allow to control functions on different parts of the boundary. Of special interest is the set $\partial M_{tan}$ where $\partial_{\nu} \phi = 0$, and in this set we have 
$$
\partial_{\nu} \phi_{\eps}|_{\partial M_{tan}} = h \partial_{\nu} f_{\eps}.
$$
We would like to have $\partial_{\nu} f_{\eps} < 0$ on $\partial M_{tan}$. It is not easy to find a global convex function $f_{\eps}$ satisfying the last condition for a general set $\partial M_{tan}$. However, splitting $f_{\eps}$ to a convex part whose normal derivative vanishes on $\partial M_{tan}$ and another part which ensures the correct sign on $\partial M_{tan}$ will give the required result. We will use semiclassical conventions in the next proof, see \cite[Section 4]{DKSaU} and \cite{Z} for more details. We also write $(v,w) = (v,w)_{L^2(M)}$, $\norm{v} = \norm{v}_{L^2(M)}$, and for $\Gamma \subset \partial M$ we write $(v,w)_{\Gamma} = (v,w)_{L^2(\Gamma)}$.

\begin{prop} \label{prop_carleman_first}
Let $(M,g)$ be as above, let $\varphi(x) = \pm x_1$, and let $\kappa$ be a smooth real valued function in $M$ so that $\partial_{\nu} \kappa = -1$ on $\partial M$. Let also $q \in L^{\infty}(M)$. There are constants $\eps, C_0, h_0 > 0$ with $h_0 \leq \eps/2 \leq 1$ such that for the weight 
$$
\varphi_{\eps} = \varphi + \frac{h}{\eps} \frac{\varphi^2}{2} + h \kappa
$$
where $0 < h \leq h_0$, one has 
\begin{multline*}
\frac{h^3}{C_0} (\abs{\partial_{\nu} \phi_{\eps}} \partial_{\nu} u,\partial_{\nu} u)_{\partial M_{-}(\phi_{\eps})} + \frac{h^2}{C_0} (\norm{u}^2 + \norm{h \nabla u}^2) \\
 \leq \norm{e^{\phi/h}(-h^2 \Delta_g + h^2 q) (e^{-\phi/h} u)}^2 + h^3 ( \abs{\partial_{\nu} \phi_{\eps}} \partial_{\nu} u,\partial_{\nu} u)_{\partial M_+(\phi_{\eps})}
\end{multline*}
for any $u \in C^{\infty}(M)$ with $u|_{\partial M} = 0$.
\end{prop}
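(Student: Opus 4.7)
The plan is to follow the standard commutator-based Carleman derivation, adapted to yield boundary terms, and to first prove the estimate with conjugation by the full modified weight $\varphi_\varepsilon$ and then transfer back to $\varphi$. Since $(\varphi_\varepsilon - \varphi)/h = \varphi^2/(2\varepsilon) + \kappa$ is smooth and bounded independently of $h$, the substitution $v = e^{\varphi^2/(2\varepsilon)+\kappa} u$ makes the estimate for $\varphi_\varepsilon$ applied to $v$ equivalent, up to fixed multiplicative constants, to the target estimate for $\varphi$ applied to $u$. Because $u|_{\partial M}=0$ implies $v|_{\partial M}=0$, the boundary values $\partial_\nu v$ and $\partial_\nu u$ differ only by a bounded multiplicative factor on $\partial M$, so the form of the boundary terms is preserved.

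Next I would set $P_\varepsilon = e^{\varphi_\varepsilon/h}(-h^2\Delta_g)e^{-\varphi_\varepsilon/h}$ and split $P_\varepsilon = A+iB$ into symmetric and antisymmetric parts, with $A = -h^2\Delta_g - |d\varphi_\varepsilon|_g^2$ and $B$ a first-order semiclassical operator built from $\langle d\varphi_\varepsilon, d\,\cdot\,\rangle_g$. Expanding $\|P_\varepsilon v\|^2 = \|Av\|^2 + \|Bv\|^2 + i([A,B]v,v) + \text{boundary}$ and integrating by parts under $v|_{\partial M}=0$, the surviving boundary contributions collapse to a single integral proportional to $h^3\int_{\partial M}(\partial_\nu \varphi_\varepsilon)|\partial_\nu v|^2\,dS$. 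Splitting this by the sign of $\partial_\nu \varphi_\varepsilon$ puts the piece over $\partial M_-(\varphi_\varepsilon)$ on the left with the favorable sign, and the piece over $\partial M_+(\varphi_\varepsilon)$ on the right, exactly matching the stated form.

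For the interior lower bound I would compute $i([A,B]v,v)$ via semiclassical symbols. Since $\varphi = \pm x_1$ is a limiting Carleman weight, the principal Poisson bracket vanishes on the joint characteristic set of $A_0,B_0$; positivity is supplied by the convex correction $\frac{h}{\varepsilon}\varphi^2/2$, following the convexification argument of \cite{KSU} and \cite[Section 4]{DKSaU}, producing a bulk contribution of order $\frac{h^3}{\varepsilon}(\|v\|^2 + \|h\nabla v\|^2)$. For $\varepsilon$ fixed sufficiently small and $h \leq \varepsilon/2$, the additional terms from the $\kappa$ correction and from the $h^2 q$ perturbation (which is $O(h^2)$ in operator norm) can be absorbed, leaving the claimed interior term $\frac{h^2}{C_0}(\|v\|^2 + \|h\nabla v\|^2)$.

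The crucial geometric point is that on $\partial M_{\tan}$ we have $\partial_\nu \varphi = 0$ but $\partial_\nu \varphi_\varepsilon = h\partial_\nu \kappa = -h$, so $\partial M_{\tan} \subset \partial M_-(\varphi_\varepsilon)$ and the favorable left-hand boundary term controls $\partial_\nu v$ on the entire set $\partial M_{\tan}$; this is precisely the mechanism later exploited in the partial-data arguments. The main technical obstacle will be the commutator computation itself: verifying that the convex correction yields enough positivity uniformly in $\varepsilon$ (with $\varepsilon$ independent of $h$), while carefully tracking the boundary contributions from the integration by parts in both $\|Bv\|^2$ and $([A,B]v,v)$ so that no uncontrolled boundary terms remain.
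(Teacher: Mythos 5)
Your proposal is correct and follows essentially the same route as the paper: the $A+iB$ decomposition of the $\varphi_\varepsilon$-conjugated operator, the single boundary term $-2h^3((\partial_\nu \varphi_\varepsilon)\partial_\nu u,\partial_\nu u)_{\partial M}$ split by the sign of $\partial_\nu\varphi_\varepsilon$, positivity from the convexification $\frac{h}{\varepsilon}\frac{\varphi^2}{2}$ with the $\kappa$-commutator and $h^2q$ errors absorbed for $\varepsilon$ small and $h\leq h_0$, and the transfer between the $\varphi$- and $\varphi_\varepsilon$-conjugations via the bounded factor $e^{\varphi^2/2\varepsilon+\kappa}$ (which the paper performs at the end rather than, as you do, at the start — an immaterial reordering). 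One small correction: the convexified commutator yields a bulk term of size $\frac{h^2}{\varepsilon}\norm{v}^2$ (the gradient control $\norm{h\nabla v}^2$ coming from $\norm{Av}^2$), not $\frac{h^3}{\varepsilon}$; this extra factor $1/\varepsilon$ at order $h^2$ is precisely what allows the $O(h^2)$ errors from $\kappa$ and the potential to be absorbed, so with the power as you wrote it the absorption step would not close.
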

\begin{proof}
Since $\varphi(x) = \pm x_1$ is a limiting Carleman weight in a manifold strictly containing $M$, the computations in the proof of \cite[Theorem 4.1]{DKSaU} apply and we can follow that proof. First of all, note that 
$$
c^{\frac{n+2}{4}}(-\Delta_g + q)u = (-\Delta_{c^{-1} g} + q_c)(c^{\frac{n-2}{4}} u)
$$
where $q_c = cq + c^{\frac{n+2}{4}} \Delta_g(c^{-\frac{n-2}{4}})$. Thus, by replacing $q$ with another potential, we may assume that $c = 1$ so that $g = e \oplus g_0$ and $\varphi$ is a distance function in the $g$ metric, i.e.~$\abs{\nabla_g \phi}_g = 1$.

Let $P_0 = -h^2 \Delta_g$ and $P_{0,\phi_{\eps}} = e^{\phi_{\eps}/h} P_0 e^{-\phi_{\eps}/h}$. Then $P_{0,\phi_{\eps}} = A + iB$ where $A$ and $B$ are the formally self-adjoint operators 
$$
A = -h^2 \Delta_g - \abs{\nabla \phi_{\eps}}^2, \quad B = -2i \langle \nabla \phi_{\eps}, h\nabla\,\cdot\, \rangle - ih \Delta_g \phi_{\eps}.
$$
Assume $u \in C^{\infty}(M)$ and $u|_{\partial M} = 0$. We have 
\begin{align*}
\norm{P_{0,\phi_{\eps}} u}^2 &= ((A+iB)u, (A+iB)u) \\
 &= \norm{Au}^2 + \norm{Bu}^2 + i(Bu,Au) - i(Au,Bu) \\
 &= \norm{Au}^2 + \norm{Bu}^2 + (i[A,B]u,u) - i h^2 (Bu,\partial_{\nu} u)_{\partial M} \\
 &= \norm{Au}^2 + \norm{Bu}^2 + (i[A,B]u,u) - 2 h^3 ((\partial_{\nu} \phi_{\eps}) \partial_{\nu} u,\partial_{\nu} u)_{\partial M}.
\end{align*}

Define 
$$
\tilde{\phi}_{\eps}(x) = \phi + \frac{h}{\eps} \frac{\phi^2}{2}.
$$
Thus $\phi_{\eps} = \tilde{\phi}_{\eps} + h \kappa$. Let 
$$
\tilde{A} = -h^2 \Delta - \abs{\nabla \tilde{\phi}_{\eps}}^2, \quad \tilde{B} = -2i \langle \nabla \tilde{\phi}_{\eps}, h\nabla\,\cdot\, \rangle - ih \Delta \tilde{\phi}_{\eps}.
$$
Since $\Delta \phi_{\eps} = \Delta \tilde{\phi}_{\eps} + h \Delta \kappa$ and $\nabla \phi_{\eps} = \nabla \tilde{\phi}_{\eps} + h \nabla \kappa$, we have 
\begin{align*}
A = \tilde{A} + A_e, \qquad & A_e = -h^2 \abs{\nabla \kappa}^2 - 2h \langle \nabla \tilde{\phi}_{\eps}, \nabla \kappa \rangle, \\
B = \tilde{B} + B_e, \qquad & B_e = -2ih \langle \nabla \kappa, h \nabla \,\cdot\, \rangle - i h^2 \Delta \kappa.
\end{align*}
Consequently 
$$
i[A,B] = i[\tilde{A}, \tilde{B}] + i[\tilde{A},B_e] + i[A_e,\tilde{B}] + i[A_e,B_e].
$$
Recall from \cite[p.~143]{DKSaU} that 
$$
i[\tilde{A}, \tilde{B}] = \frac{4h^2}{\eps} \left( 1 + \frac{h}{\eps} \phi \right)^2 + h \tilde{B} \beta \tilde{B} + h^2 R
$$
where $\beta = (h/\eps) (1+(h/\eps) \phi)^{-2}$ and $R$ is a first order semiclassical differential operator whose coefficients are uniformly bounded with respect to $h$ and $\eps$ if we assume that $h/\eps \leq 1/2$. Consider now 
$$
i[\tilde{A},B_e] = i[-h^2 \Delta - \abs{\nabla \tilde{\phi}_{\eps}}^2, -2ih \langle \nabla \kappa, h \nabla \,\cdot\, \rangle - i h^2 \Delta \kappa].
$$
It is clear that this equals $h^2 Q$, where $Q$ is a second order semiclassical differential operator whose coefficients are uniformly bounded in $h$ and $\eps$. The terms $i[A_e,\tilde{B}]$ and $i[A_e,B_e]$ are better. We thus have 
$$
i[A,B] = \frac{4h^2}{\eps} \left( 1 + \frac{h}{\eps} \phi \right)^2 + h \tilde{B} \beta \tilde{B} + h^2 Q
$$
for some $Q$ as described above. It follows that 
$$
(i[A,B]u, u) = \frac{4h^2}{\eps} \norm{ (1 + h \phi/\eps) u}^2 + h (\tilde{B} \beta \tilde{B} u, u) + h^2(Qu,u).
$$

We will choose $h_0$ so small that $\abs{h \phi/\eps} \leq 1/2$ in $M$ for $h \leq h_0$. Since $u|_{\partial M} = 0$, integration by parts gives 
$$
\abs{ h (\tilde{B} \beta \tilde{B} u, u)} \leq C_1 \frac{h^2}{\eps} \norm{\tilde{B} u}^2
$$
Similarly 
$$
\abs{h^2(Qu,u)} \leq C_2 h^2 (\norm{u}^2 + \norm{h\nabla u}^2).
$$
Putting this information together, we get 
$$
(i[A,B]u, u) \geq \frac{h^2}{\eps} \norm{u}^2 - C_1 \frac{h^2}{\eps} \norm{\tilde{B} u}^2 - C_2 h^2 (\norm{u}^2 + \norm{h\nabla u}^2).
$$

Next we revisit the term $\norm{Au}^2$. Let $K$ be a positive constant whose value will be specified later. Since $u|_{\partial M} = 0$, integration by parts and Young's inequality give that 
\begin{align*}
h^2 \norm{h \nabla u}^2 &= h^2 (-h^2 \Delta u, u) = h^2 (Au,u) + h^2( \abs{\nabla \phi_{\eps}}^2 u, u) \\
 &\leq \frac{1}{2K} \norm{Au}^2 + \frac{K h^4}{2} \norm{u}^2 + C_3 h^2 \norm{u}^2,
\end{align*}
or 
$$
\norm{Au}^2 \geq 2K h^2 \norm{h \nabla u}^2 - K^2 h^4 \norm{u}^2 - 2 K C_3 h^2 \norm{u}^2.
$$
Also recall that $B-\tilde{B} = B_e = -2ih \langle \nabla \kappa, h \nabla \,\cdot\, \rangle - i h^2 \Delta \kappa$. Thus, 
$$
\norm{(B-\tilde{B})u}^2 \leq C_4 h^2 (\norm{u}^2 + \norm{h \nabla u}^2).
$$
Hence 
$$
\norm{\tilde{B}u}^2 \leq 2 \norm{Bu}^2 + 2 C_4 h^2 (\norm{u}^2 + \norm{h \nabla u}^2)
$$
and 
$$
\norm{Bu}^2 \geq \frac{1}{2} \norm{\tilde{B}u}^2 - C_4 h^2 (\norm{u}^2 + \norm{h \nabla u}^2).
$$
Putting our estimates together, we obtain 
\begin{align*}
 &\norm{P_{0,\phi_{\eps}} u}^2 \geq 2K h^2 \norm{h \nabla u}^2 - K^2 h^4 \norm{u}^2 - 2 K C_3 h^2 \norm{u}^2 \\
 &+ \frac{1}{2} \norm{\tilde{B}u}^2 - C_4 h^2 (\norm{u}^2 + \norm{h \nabla u}^2) + \frac{h^2}{\eps} \norm{u}^2 - C_1 \frac{h^2}{\eps} \norm{\tilde{B} u}^2 \\
 &- C_2 h^2 (\norm{u}^2 + \norm{h\nabla u}^2) - 2 h^3 ((\partial_{\nu} \phi_{\eps}) \partial_{\nu} u,\partial_{\nu} u)_{\partial M}.
\end{align*}

At this point, we choose $h_0$ so small that 
$$
C_1 \frac{h_0^2}{\eps} \leq \frac{1}{4}.
$$
We also make the choice  
$$
K = \frac{1}{\alpha \eps}
$$
where $\alpha$ is to be determined. Then for $h \leq h_0$ 
\begin{align*}
 &\norm{P_{0,\phi_{\eps}} u}^2 \geq \frac{h^2}{\eps} (\norm{u}^2  + \frac{2}{\alpha} \norm{h \nabla u}^2) - (C_2 + C_4) h^2 (\norm{u}^2 + \norm{h\nabla u}^2) \\
 &- \frac{h^2}{\eps} \frac{h^2}{\alpha^2 \eps} \norm{u}^2 - \frac{h^2}{\eps} \frac{2 C_3}{\alpha} \norm{u}^2 + \frac{1}{4} \norm{\tilde{B}u}^2 - 2 h^3 ((\partial_{\nu} \phi_{\eps}) \partial_{\nu} u,\partial_{\nu} u)_{\partial M}.
\end{align*}
Choose first $\alpha = 4 C_3$. It follows that 
\begin{align*}
 &\norm{P_{0,\phi_{\eps}} u}^2 \geq \frac{h^2}{2 \eps} \left[ 1 - 2\eps (C_2+C_4) - \frac{2h^2}{\alpha^2 \eps} \right] \norm{u}^2 \\
 &+ \frac{h^2}{\eps} \left[ \frac{2}{\alpha} - \eps (C_2 + C_4) \right] \norm{h\nabla u}^2 - 2 h^3 ((\partial_{\nu} \phi_{\eps}) \partial_{\nu} u,\partial_{\nu} u)_{\partial M}.
\end{align*}
Choose next $\eps$ so that 
$$
\eps = \min \left\{ \frac{1}{4(C_2+C_4)}, \frac{1}{\alpha(C_2+C_4)} \right\}.
$$
Finally, choose $h_0$ so that it satisfies all the restrictions made earlier ($h_0 \leq \eps/2$, $h_0 \max_{x \in M} \abs{\phi} \leq \eps/2$, and $h_0^2 \leq \eps/(4 C_1)$) and additionally 
$$
\frac{2h_0^2}{\alpha^2 \eps} \leq \frac{1}{4}.
$$
With these choices, we have 
$$
\norm{P_{0,\phi_{\eps}} u}^2 \geq \frac{h^2}{8 \eps} \norm{u}^2 + \frac{h^2}{\alpha \eps} \norm{h\nabla u}^2 - 2 h^3 ((\partial_{\nu} \phi_{\eps}) \partial_{\nu} u,\partial_{\nu} u)_{\partial M}.
$$
Adding a potential gives 
$$
\norm{P_{0,\phi_{\eps}} u}^2 \leq 2 \norm{(P_{0,\phi_{\eps}} + h^2 q) u}^2 + 2 h^4 \norm{q}_{L^{\infty}(M)}^2 \norm{u}^2.
$$
Choosing an even smaller value of  $h_0$ depending on $\norm{q}_{L^{\infty}(M)}$ if necessary, we obtain for $0 < h \leq h_0$ that 
$$
\norm{(P_{0,\phi_{\eps}} + h^2 q) u}^2 \geq \frac{h^2}{C_0} (\norm{u}^2 + \norm{h\nabla u}^2) - 2 h^3 ((\partial_{\nu} \phi_{\eps}) \partial_{\nu} u,\partial_{\nu} u)_{\partial M}.
$$
Finally, we replace $u$ by $e^{\phi^2/2\eps + \kappa} u$, where $u \in C^{\infty}(\overline{\Omega})$ and $u|_{\partial \Omega} = 0$, and use the fact that 
$$
1/C \leq e^{\phi^2/2\eps + \kappa} \leq C, \ \ \abs{\nabla(e^{\phi^2/2\eps + \kappa})} \leq C \ \text{on } M.
$$
The required estimate follows.
\end{proof}

We now pass from $\phi_{\eps}$ to $\phi$ in the boundary terms of the previous result, making use of the special properties of $\phi_{\eps}$ on $\partial M$. Note that the factor $h^4$ in the boundary term on $\{x \in \partial M \,;\, -\delta < \partial_{\nu} \phi(x) < h/3 \}$ below is weaker than the factor $h^3$ in the other boundary terms. This follows from the fact that $\partial_{\nu} \phi_{\eps} = h \partial_{\nu} \kappa = -h$ in the set where $\partial_{\nu} \phi$ vanishes, so one only has the weak lower bound.

\begin{prop} \label{prop_carleman_second}
Let $(M,g)$ be as above, let $q \in L^{\infty}(M)$, and let $\phi(x) = \pm x_1$. There exist constants $C_0, h_0 > 0$ such that whenever $0 < h \leq h_0$ and $\delta > 0$, one has 
\begin{multline*}
\frac{\delta h^3}{C_0} \norm{\partial_{\nu} u}_{L^2(\{ \partial_{\nu} \phi \leq -\delta \})}^2 + \frac{h^4}{C_0} \norm{\partial_{\nu} u}_{L^2(\{ -\delta < \partial_{\nu} \phi < h/3 \})}^2+ \frac{h^2}{C_0} (\norm{u}^2 + \norm{h \nabla u}^2) \\
 \leq \norm{e^{\phi/h}(-h^2 \Delta_g + h^2 q) (e^{-\phi/h} u)}^2 + h^3 \norm{\partial_{\nu} u}_{L^2(\{ \partial_{\nu} \phi \geq h/3 \})}^2
\end{multline*}
for any $u \in C^{\infty}(M)$ with $u|_{\partial M} = 0$.
\end{prop}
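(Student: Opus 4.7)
The plan is to deduce Proposition~\ref{prop_carleman_second} directly from Proposition~\ref{prop_carleman_first} by a pointwise analysis of $\partial_\nu \phi_\eps$ on $\partial M$. Since $\phi_\eps = \phi + \frac{h}{\eps}\frac{\phi^2}{2} + h\kappa$ and $\partial_\nu \kappa = -1$ on $\partial M$, the chain rule gives
$$
\partial_\nu \phi_\eps = \bigl(1 + \tfrac{h\phi}{\eps}\bigr)\partial_\nu \phi - h =: a(x)\,\partial_\nu \phi - h,
$$
and by choosing $h_0$ small (depending on $\sup_M |\phi|$) we may assume $|h\phi/\eps|\leq 1/2$ throughout $M$, so $a(x)\in [1/2,3/2]$.

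The crucial observation, separating the three regimes in the statement, is this: whenever $\partial_\nu \phi < h/3$ one has
$$
\partial_\nu \phi_\eps \;\leq\; a\cdot\max(\partial_\nu \phi,0) - h \;\leq\; \tfrac{3}{2}\cdot\tfrac{h}{3} - h \;=\; -\tfrac{h}{2},
$$
so the set $\{\partial_\nu \phi < h/3\}$ is entirely contained in $\partial M_{-}(\phi_\eps)$, with $|\partial_\nu \phi_\eps| \geq h/2$ there. Contrapositively, $\partial M_{+}(\phi_\eps) \subset \{\partial_\nu \phi \geq h/3\}$. This is the only nontrivial step; the rest is bookkeeping.

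For the left-hand side, I would restrict the integration in the boundary term of Proposition~\ref{prop_carleman_first} to the two disjoint subsets $\{\partial_\nu \phi \leq -\delta\}$ and $\{-\delta < \partial_\nu \phi < h/3\}$ of $\partial M_{-}(\phi_\eps)$. On the first set $\partial_\nu \phi_\eps \leq -a\delta - h$, so $|\partial_\nu \phi_\eps|\geq \delta/2$; on the second, the previous paragraph gives $|\partial_\nu \phi_\eps|\geq h/2$. Hence
$$
(|\partial_\nu \phi_\eps|\partial_\nu u,\partial_\nu u)_{\partial M_{-}(\phi_\eps)} \;\geq\; \tfrac{\delta}{2}\|\partial_\nu u\|^2_{L^2(\{\partial_\nu \phi\leq -\delta\})} + \tfrac{h}{2}\|\partial_\nu u\|^2_{L^2(\{-\delta<\partial_\nu \phi<h/3\})},
$$
which after multiplying by $h^3/C_0$ reproduces the first two boundary terms on the LHS of Proposition~\ref{prop_carleman_second}, possibly with a larger constant $C_0$.

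For the right-hand side, I use that $\phi_\eps$ is smooth on $M$ so $|\partial_\nu \phi_\eps|\leq C$ on $\partial M$ uniformly in $h,\eps$, and that $\partial M_{+}(\phi_\eps)\subset \{\partial_\nu \phi\geq h/3\}$, to bound
$$
h^3(|\partial_\nu \phi_\eps|\partial_\nu u,\partial_\nu u)_{\partial M_{+}(\phi_\eps)} \;\leq\; C h^3 \|\partial_\nu u\|^2_{L^2(\{\partial_\nu \phi \geq h/3\})}.
$$
Combining this bound on the RHS with the lower bound on the LHS (and absorbing $C$ into $C_0$) delivers Proposition~\ref{prop_carleman_second}. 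There is really no serious obstacle here: the only delicate point is the threshold choice $h/3$, which is forced by the constraint $a\leq 3/2$, and the resulting loss from $h^3$ down to $h^4$ in the intermediate zone is unavoidable because $|\partial_\nu \phi_\eps|$ is only of order $h$ there, as $h\partial_\nu \kappa = -h$ dominates $a\,\partial_\nu \phi$.
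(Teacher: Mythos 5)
Your proposal is correct and follows essentially the same route as the paper's own proof: the identity $\partial_\nu\phi_\eps = (1+\tfrac{h}{\eps}\phi)\partial_\nu\phi - h$ with $1+\tfrac{h}{\eps}\phi \in [\tfrac12,\tfrac32]$, the lower bounds $\abs{\partial_\nu\phi_\eps}\geq \delta/2$ on $\{\partial_\nu\phi\leq -\delta\}$ and $\abs{\partial_\nu\phi_\eps}\geq h/2$ on $\{-\delta<\partial_\nu\phi<h/3\}$, the inclusions $\{\partial_\nu\phi<h/3\}\subset\{\partial_\nu\phi_\eps<0\}$ and $\{\partial_\nu\phi_\eps\geq 0\}\subset\{\partial_\nu\phi\geq h/3\}$, and the uniform upper bound on $\abs{\partial_\nu\phi_\eps}$, with the final constant absorbed by enlarging $C_0$ exactly as in the paper.
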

\begin{proof}
Note that 
$$
\partial_{\nu} \phi_{\eps} = \left( 1 + \frac{h}{\eps} \phi \right) \partial_{\nu} \phi + h \partial_{\nu} \kappa = \left( 1 + \frac{h}{\eps} \phi \right) \partial_{\nu} \phi - h.
$$
We choose $h_0$ so small that whenever $h \leq h_0$, one has for $x \in M$ 
$$
\frac{1}{2} \leq 1 + \frac{h}{\eps} \phi(x) \leq \frac{3}{2}.
$$
On the set where $\partial_{\nu} \phi(x) \leq -\delta$, we have  
$$
\abs{\partial_{\nu} \phi_{\eps}} \geq \delta/2.
$$
If $-\delta < \partial_{\nu} \phi < h/3$, we use the estimate 
$$
\abs{\partial_{\nu} \phi_{\eps}} \geq h/2.
$$
Moreover, $\abs{\partial_{\nu} \phi_{\eps}} \leq C_0$ on $\partial M$. Since $\{ \partial_{\nu} \phi < h/3 \} \subset \{ \partial_{\nu} \phi_{\eps} < 0 \}$ and $ \{ \partial_{\nu} \phi_{\eps} \geq 0 \} \subset \{ \partial_{\nu} \phi \geq h/3 \}$, the result follows from Proposition \ref{prop_carleman_first} after replacing $C_0$ by some larger constant.
\end{proof}

We can now obtain a solvability result from the previous Carleman estimate in a standard way by duality (see \cite{BU, KSU, NS}). There is a slight technical complication since the solution will be in $L^2$ but not in $H^1$. To remedy this we will work with the space 
$$
H_{\Delta_g}(M) = \{ u \in L^2(M) \,;\, \Delta_g u \in L^2(M) \}
$$
with norm $\norm{u}_{H_{\Delta}} = \norm{u}_{L^2} + \norm{\Delta u}_{L^2}$. As in \cite{BU}, we see that $H_{\Delta}(M)$ is a Hilbert space having $C^{\infty}(M)$ as a dense subset, and there is a well defined bounded trace operator from $H_{\Delta}(M)$ to $H^{-1/2}(\partial M)$ and a normal derivative operator from $H_{\Delta}(M)$ to $H^{-3/2}(\partial M)$. We also recall that if $u \in H_{\Delta}(M)$ and $u|_{\partial M} \in H^{3/2}(\partial M)$, then $u \in H^2(M)$.

\begin{prop} \label{prop_carleman_solvability}
Let $(M,g)$ be as above, let $q \in L^{\infty}(M)$, and let $\phi(x) = \pm x_1$. There exist constants $C_0, \tau_0 > 0$ such that when $\tau \geq \tau_0$ and $\delta > 0$, then for any $f \in L^2(M)$ and $f_- \in L^2(S_- \cup S_0)$ there exists $u \in L^2(M)$ satisfying $e^{\tau \phi} u \in H_{\Delta_g}(M)$ and $e^{\tau \phi} u|_{\partial M} \in L^2(\partial M)$ such that  
$$
e^{-\tau \phi}(-\Delta_g + q)(e^{\tau \phi} u) = f \ \ \text{in M}, \quad e^{\tau \phi} u|_{S_- \cup S_0} = e^{\tau \phi} f_-,
$$
and 
$$
\norm{u}_{L^2(M)} \leq C_0 (\tau^{-1} \norm{f}_{L^2(M)} + (\delta \tau)^{-1/2} \norm{f_-|_{S_-}}_{L^2(S_-)} + \norm{f_-|_{S_0}}_{L^2(S_0)}).
$$
Here $S_{\pm}$ and $S_0$ are the following subsets of $\partial M$:
$$
S_- = \{ \partial_{\nu} \phi \leq -\delta \}, S_0 = \{ -\delta < \partial_{\nu} \phi < 1/(3\tau) \}, S_+ = \{ \partial_{\nu} \phi \geq 1/(3\tau) \}.
$$
\end{prop}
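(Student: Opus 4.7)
The plan is to deduce the proposition from the Carleman estimate of Proposition~\ref{prop_carleman_second} via a standard Hahn--Banach/duality argument, in the spirit of \cite{BU, KSU, NS}. Write $P = -\Delta_g + q$ and $Lu = e^{-\tau\phi} P(e^{\tau\phi} u)$, so the problem is $Lu = f$ in $M$ subject to the partial Dirichlet condition $u|_{S_- \cup S_0} = f_-$. The formal $L^2$-adjoint of $L$ is $L^* v = e^{\tau\phi} P^*(e^{-\tau\phi} v)$ with $P^* = -\Delta_g + \bar q$. Applying Proposition~\ref{prop_carleman_second} with weight $\phi$ and potential $\bar q$ to $L^* v$, and dividing by $h^4 = \tau^{-4}$, gives, for every $v \in C^{\infty}(M)$ with $v|_{\partial M} = 0$ and $\tau \geq \tau_0$,
$$
\frac{\tau^2}{C}\norm{v}^2 + \frac{\delta\tau}{C}\norm{\partial_\nu v}_{L^2(S_-)}^2 + \frac{1}{C}\norm{\partial_\nu v}_{L^2(S_0)}^2 \leq \norm{L^* v}_{L^2(M)}^2 + \tau\norm{\partial_\nu v}_{L^2(S_+)}^2.
$$

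Restricting to the subspace $V_+ := \{v \in C^{\infty}(M) : v|_{\partial M} = 0,\ \partial_\nu v|_{S_+} = 0\}$ kills the last term. Then $L^*$ is injective on $V_+$ and, for $v \in V_+$,
$$
\norm{v} \le C\tau^{-1}\norm{L^* v}, \quad \norm{\partial_\nu v}_{L^2(S_-)} \le C(\delta\tau)^{-1/2}\norm{L^* v}, \quad \norm{\partial_\nu v}_{L^2(S_0)} \le C\norm{L^* v}.
$$
Define the antilinear functional
$$
\ell(L^* v) := (f, v)_{L^2(M)} + \int_{S_- \cup S_0} f_-\, \overline{\partial_\nu v}\,dS
$$
on the subspace $L^* V_+ \subset L^2(M)$. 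Cauchy--Schwarz combined with the displayed bounds yields
$$
\abs{\ell(L^* v)} \le C_0\bigl(\tau^{-1}\norm{f}_{L^2(M)} + (\delta\tau)^{-1/2}\norm{f_-}_{L^2(S_-)} + \norm{f_-}_{L^2(S_0)}\bigr)\norm{L^* v}_{L^2(M)}.
$$
By Hahn--Banach and the Riesz representation theorem there exists $u \in L^2(M)$ obeying the desired norm bound such that $(L^* v, u)_M = \ell(L^* v)$ for every $v \in V_+$.

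To identify $u$ as the required solution, testing with $v \in C_c^{\infty}(M^{\mathrm{int}}) \subset V_+$ gives $Lu = f$ distributionally, equivalently $w := e^{\tau\phi} u$ satisfies $Pw = e^{\tau\phi} f$ in $M$; since $w, Pw \in L^2(M)$ one has $w \in H_{\Delta_g}(M)$ and $w|_{\partial M} \in H^{-1/2}(\partial M)$ by \cite{BU}. For a general $v \in V_+$, substituting $\check v = e^{-\tau\phi} v$ and comparing the functional identity with Green's formula
$$
(w, P^*\check v) - (Pw, \check v) = \int_{\partial M} w\, \overline{\partial_\nu \check v}\,dS
$$
forces $w|_{S_- \cup S_0} = e^{\tau\phi} f_-$ in $L^2(S_- \cup S_0)$, since $\partial_\nu \check v|_{S_- \cup S_0}$ ranges over a dense set of $L^2(S_- \cup S_0)$-functions. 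The main technical obstacle is promoting the a priori $H^{-1/2}$ trace of $w$ to $L^2(\partial M)$, i.e.~showing $w|_{S_+} \in L^2(S_+)$, for which the duality construction provides no direct information; I would handle this by approximating $(f, f_-)$ by smooth data $(f_n, f_{-,n})$ with $f_{-,n}$ extended smoothly across $S_+$, solving the resulting classical boundary-value problems, and invoking Proposition~\ref{prop_carleman_first} applied to differences to obtain $L^2(S_+)$-convergence of the traces, as carried out in \cite{BU, KSU, NS}.
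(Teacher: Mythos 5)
Your duality scheme is the right general idea, but as written it does not prove the full statement, and the point you yourself flag is a genuine gap rather than a routine technicality. Restricting to $V_+=\{v\in C^{\infty}(M):v|_{\partial M}=0,\ \partial_{\nu}v|_{S_+}=0\}$ does kill the $S_+$ term in the rescaled estimate of Proposition \ref{prop_carleman_second}, and since $C_c^{\infty}(M^{\mathrm{int}})\subset V_+$ and $S_-\cup S_0=\{\partial_{\nu}\phi<1/(3\tau)\}$ is open in $\partial M$ (so normal derivatives of admissible test functions are dense in $L^2(S_-\cup S_0)$), you do get a solution $u$ with the norm bound, the interior equation, and the boundary condition on $S_-\cup S_0$. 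But the proposition also asserts $e^{\tau\phi}u|_{\partial M}\in L^2(\partial M)$, i.e.\ an $L^2$ trace on $S_+$, and your construction gives no control there: the problem is highly non-unique, and nothing forces the particular Hahn--Banach/Riesz representative you obtain to have better than an $H^{-1/2}$ trace on $S_+$. The patch you sketch does not close this: ``$Lu=f$ with Dirichlet data prescribed only on $S_-\cup S_0$'' is not a classical well-posed boundary value problem, so there is no evident family of smooth approximate solutions to pass to the limit with; and Propositions \ref{prop_carleman_first}--\ref{prop_carleman_second} apply only to functions vanishing on \emph{all} of $\partial M$ and estimate normal derivatives, not Dirichlet traces, so applying them ``to differences'' of approximate solutions (whose boundary values on $S_-\cup S_0$ are nonzero and whose $S_+$ traces are exactly what you are trying to control) is not available.

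The paper removes this difficulty inside the duality argument itself, and you should do the same: instead of shrinking the space of test functions, enlarge the dual space. Define the functional on the subspace $\{(Lv,\partial_{\nu}v|_{S_+}):v\in C^{\infty}(M),\ v|_{\partial M}=0\}$ of $L^2(M)\times \tau^{-1/2}L^2(S_+)$; it is well defined and bounded by the full Carleman estimate, $S_+$ term included. The Riesz representation then produces a pair $(u,u_+)\in L^2(M)\times L^2(S_+)$ with $\norm{u}_{L^2(M)}+\tau^{-1/2}\norm{u_+}_{L^2(S_+)}$ bounded by the right-hand side of the desired estimate, and the same Green's-formula computation you perform (applied with arbitrary $\partial_{\nu}v$ on $\partial M$, after approximating $e^{\tau\phi}u$ in $H_{\Delta_g}(M)$ by smooth functions) identifies not only $e^{\tau\phi}u|_{S_-\cup S_0}=e^{\tau\phi}f_-$ but also $e^{\tau\phi}u|_{S_+}=e^{\tau\phi}u_+$, so the $L^2(\partial M)$ trace claim comes out of the construction for free. (One small bookkeeping point: with $\check v|_{\partial M}=0$ your Green's formula should read $(w,P^*\check v)-(Pw,\check v)=-\int_{\partial M}w\,\overline{\partial_{\nu}\check v}\,dS$; depending on your inner-product conventions this only flips a sign in the definition of the functional and in the recovered boundary value, so it is harmless but should be fixed.)
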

\begin{proof}
Write $Lv = e^{\tau \phi}(-\Delta_g + \bar{q}) (e^{-\tau \phi} v)$ and $\tau = 1/h$, $\tau_0 = 1/h_0$. We rewrite the Carleman estimate of Proposition \ref{prop_carleman_second} as 
\begin{multline*}
(\delta \tau)^{1/2} \norm{\partial_{\nu} v}_{L^2(S_-)} + \norm{\partial_{\nu} v}_{L^2(S_0)} + \tau \norm{v} + \norm{\nabla v} \\
 \leq C_0 \norm{Lv} + C_0 \tau^{1/2} \norm{\partial_{\nu} v}_{L^2(S_+)}.
\end{multline*}
This is valid for any $\delta > 0$, provided that $\tau \geq \tau_0$ and $v \in C^{\infty}(M)$ with $v|_{\partial M} = 0$.

Consider the following subspace of $L^2(M) \times L^2(S_+)$:
$$
X = \{ (Lv, \partial_{\nu} v|_{S_+}) \,;\, v \in C^{\infty}(M), \ v|_{\partial M} = 0 \}.
$$
Any element of $X$ is uniquely represented as $(Lv, \partial_{\nu} v|_{S_+})$ where $v|_{\partial M} = 0$ by the Carleman estimate. Define a linear functional $l: X \to \mC$ by 
$$
l(Lv, \partial_{\nu} v|_{S_+}) = ( v, f )_{L^2(M)} - ( \partial_{\nu} v, f_- )_{L^2(S_- \cup S_0)}.
$$
By the Carleman estimate, we have 
\begin{align*}
\abs{l(Lv, \partial_{\nu} v|_{S_+})} &\leq \norm{v} \norm{f} + \norm{\partial_{\nu} v}_{L^2(S_-)} \norm{f_-}_{L^2(S_-)} \\
 &\qquad + \norm{\partial_{\nu} v}_{L^2(S_0)} \norm{f_-}_{L^2(S_0)} \\
 &\leq C_0 (\tau^{-1} \norm{f} + (\delta \tau)^{-1/2} \norm{f_-}_{L^2(S_-)} + \norm{f_-}_{L^2(S_0)}) \\
 &\qquad \times (\norm{Lv} + \tau^{1/2} \norm{\partial_{\nu} v}_{L^2(S_+)}).
\end{align*}
The Hahn-Banach theorem implies that $l$ extends to a continuous linear functional $\bar{l}: L^2(M) \times \tau^{-1/2} L^2(S_+) \to \mC$ such that 
$$
\norm{\bar{l}} \leq C_0 (\tau^{-1} \norm{f} + (\delta \tau)^{-1/2} \norm{f_-}_{L^2(S_-)} + \norm{f_-}_{L^2(S_0)}).
$$
By the Riesz representation theorem, there exist functions $u \in L^2(M)$ and $u_+ \in L^2(S_+)$ satisfying $\bar{l}(w, w_+) = (w, u)_{L^2(M)} + (w_+, u_+)_{L^2(S_+)}$. Moreover, 
\begin{multline*}
\norm{u}_{L^2(M)} + \tau^{-1/2} \norm{u_+}_{L^2(S_+)} \\
\leq C_0 (\tau^{-1} \norm{f} + (\delta \tau)^{-1/2} \norm{f_-}_{L^2(S_-)} + \norm{f_-}_{L^2(S_0)}).
\end{multline*}
If $v \in C^{\infty}(M)$ and $v|_{\partial M} = 0$, we have 
\begin{align}
(Lv, u)_{L^2(M)} + (\partial_{\nu} v, u_+)_{L^2(S_+)} &= ( v, f )_{L^2(M)} \label{lvu_solution} \\
 &\qquad - ( \partial_{\nu} v, f_- )_{L^2(S_- \cup S_0)}. \notag
\end{align}
Choosing $v$ compactly supported in $M^{\text{int}}$, it follows that $L^* u = f$, or 
$$
e^{-\tau \phi}(-\Delta_g + q) (e^{\tau \phi} u) = f \quad \text{in } M.
$$
Furthermore, $e^{\tau \phi} u \in H_{\Delta}(M)$.

If $w, v \in C^{\infty}(M)$ with $v|_{\partial M} = 0$, an integration by parts gives 
$$
(Lv, w) = -(e^{-\tau \phi} \partial_{\nu} v, e^{\tau \phi} w)_{L^2(\partial M)} + (v, L^* w).
$$
Given our solution $u$, we choose $u_j \in C^{\infty}(M)$ so that $e^{\tau \phi} u_j \to e^{\tau \phi} u$ in $H_{\Delta}(M)$. Applying the above formula with $w = u_j$ and taking the limit, we see that 
$$
(Lv, u) = -(e^{-\tau \phi} \partial_{\nu} v, e^{\tau \phi} u)_{L^2(\partial M)} + (v, L^* u)
$$
for $v \in C^{\infty}(M)$ with $v|_{\partial M} = 0$. Combining this with \eqref{lvu_solution}, using that $L^* u = f$, gives 
$$
( \partial_{\nu} v, f_- )_{L^2(S_- \cup S_0)} + (\partial_{\nu} v, u_+)_{L^2(S_+)} = (e^{-\tau \phi} \partial_{\nu} v, e^{\tau \phi} u)_{L^2(\partial M)}.
$$Since $\partial_{\nu} v$ can be chosen arbitrarily, it follows that 
$$
e^{\tau \phi} u|_{S_- \cup S_0} = e^{\tau \phi} f_-, \quad e^{\tau \phi} u|_{S_+} = e^{\tau \phi} u_+.
$$
We also see that $e^{\tau \phi} u|_{\partial M} \in L^2(\partial M)$.
\end{proof}

\section{Reflection approach} \label{sec_reflection}

In the previous section, we employed Carleman estimates and duality to obtain a solvability result (Proposition \ref{prop_carleman_solvability}) that will be used to produce correction terms in complex geometrical optics solutions with prescribed behavior on parts of the boundary. In this section we give an alternative approach to the construction of correction terms vanishing on parts of the boundary. The method is based on a reflection argument. We extend the method of \cite{I}, which dealt with inaccessible parts that are part of a hyperplane, to the case of inaccessible parts that are part of the graph of a function independent of one of the variables. The results are less general than the ones in Section \ref{sec_carleman}, and for simplicity will only be stated for domains in $\mR^3$ with Euclidean metric, but on the other hand the method is constructive and is based on direct Fourier arguments in the spirit of \cite{KSaU}, \cite{KSaU_reconstruction}.

Let $\Omega \subset \mR^3$ be a bounded domain with smooth boundary, and assume that 
$$
\Omega \subset \mR \times \{ (x_2,x_3) \,;\, x_3 > \eta(x_2) \}
$$
where $\eta: \mR \to \mR$ is a smooth function. Also assume that $\Gamma_0$ is a closed subset of $\partial \Omega$ such that 
$$
\Gamma_0 \subset \mR \times \{ (x_2,x_3) \,;\, x_3 = \eta(x_2) \}.
$$
We will show that if one has access to suitable amplitudes of complex geometrical optics solutions that vanish on $\Gamma_0$, then it is possible to produce correction terms that also vanish on $\Gamma_0$.

\begin{prop} \label{prop_cgo_reflection}
Let $\Omega$ and $\Gamma_0$ be as above, and let $q \in L^{\infty}(\Omega)$. There are $C_0$, $\tau_0 > 0$ such that for any $\tau$ with $\abs{\tau} \geq \tau_0$ and for any $m \in H^2(\Omega)$ with $m|_{\Gamma_0} = 0$, the equation $(-\Delta+q)u = 0$ in $\Omega$ has a solution $u \in H^2(\Omega)$ of the form 
$$
u = e^{-\tau x_1}(m + r)
$$
such that $r|_{\Gamma_0} = 0$ and 
$$
\norm{r}_{L^2(\Omega)} \leq \frac{C_0}{\abs{\tau}} \norm{e^{\tau x_1}(-\Delta+q)(e^{-\tau x_1} m)}_{L^2(\Omega)}.
$$
\end{prop}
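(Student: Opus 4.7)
\medskip

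\noindent\textbf{Proof proposal.} The plan is to adapt Isakov's reflection argument by flattening $\Gamma_0$ through an $x_1$-preserving change of coordinates and then constructing the correction $r$ on a doubled slab by direct Fourier inversion. First introduce coordinates $y = (x_1, x_2, x_3 - \eta(x_2))$; under this change of variables $\Gamma_0$ is sent into $\{y_3 = 0\}$, the image of $\Omega$ lies locally in $\{y_3 > 0\}$, and the Carleman weight $x_1 = y_1$ is preserved. In these coordinates the Euclidean Laplacian takes the form
$$
\widetilde\Delta = \partial_{y_1}^2 + \partial_{y_2}^2 + \bigl(1 + \eta'(y_2)^2\bigr)\partial_{y_3}^2 - 2\eta'(y_2)\partial_{y_2}\partial_{y_3} - \eta''(y_2)\partial_{y_3},
$$
whose coefficients depend only on $y_2$ and, crucially, are independent of $y_1$ and $y_3$. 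Writing $u = e^{-\tau y_1}(m+r)$ reduces the problem to solving
$$
L_\tau r = -f, \qquad L_\tau := -\widetilde\Delta + 2\tau\partial_{y_1} - \tau^2 + q,\qquad f := e^{\tau x_1}(-\Delta+q)(e^{-\tau x_1}m),
$$
for $r$ with vanishing trace on $\{y_3 = 0\}$.

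Since $m|_{\Gamma_0} = 0$, odd-reflect $m$ and $f$ in $y_3$ to obtain $\widetilde m \in H^2$ and $\widetilde f \in L^2$ on a doubled slab, and extend by zero outside. I would then seek $\widetilde r$ on $\mathbb{R}^3$ by inverting an extended operator $\widetilde L_\tau$ that (i) restricts to $L_\tau$ on $\{y_3 > 0\}$ and (ii) maps odd-in-$y_3$ functions to odd-in-$y_3$ functions. A parity check forces the extension: the coefficient of $\partial_{y_3}^2$ and the potential $q$ are extended evenly in $y_3$, while the coefficients of the parity-flipping operators $\partial_{y_2}\partial_{y_3}$ and $\partial_{y_3}$ are multiplied by $\mathrm{sgn}(y_3)$. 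With this choice $\widetilde L_\tau$ has bounded piecewise-smooth coefficients on $\mathbb{R}^3$ and preserves the odd-in-$y_3$ subspace.

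The key step is then to solve $\widetilde L_\tau \widetilde r = -\widetilde f$ on $\mathbb{R}^3$ with the bound $\|\widetilde r\|_{L^2} \le C_0|\tau|^{-1} \|\widetilde f\|_{L^2}$, by direct Fourier inversion in the spirit of \cite{KSaU}, \cite{KSaU_reconstruction}. With $y_2$ momentarily frozen, the symbol of $\widetilde L_\tau - q$ is
$$
p(\xi; y_2) = (\xi_2 - \eta'(y_2)\xi_3)^2 + \xi_3^2 + \xi_1^2 - \tau^2 + i\bigl(2\tau\xi_1 + \eta''(y_2)\xi_3\bigr),
$$
whose zero set is a smooth codimension-two submanifold of $\mathbb{R}^3_\xi$ (the $\xi_1$-derivative of the imaginary part is $2\tau \neq 0$); the standard Sylvester--Uhlmann contour-shift argument then yields $\|p^{-1}\hat g\|_{L^2_\xi} \le C|\tau|^{-1}\|\hat g\|_{L^2_\xi}$ uniformly in $y_2$. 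Unfreezing $y_2$ produces commutators which, together with the $\mathrm{sgn}(y_3)$ jump and the potential $q$, are lower-order relative to the principal part and are absorbed by the $|\tau|^{-1}$ gain via Neumann series for $|\tau| \ge \tau_0$. By uniqueness and parity preservation the resulting $\widetilde r$ is odd in $y_3$, so its trace on $\{y_3 = 0\}$ vanishes; restricting to $\{y_3 > 0\}$, undoing the change of variables, and invoking elliptic regularity produces $r \in H^2(\Omega)$ with $r|_{\Gamma_0} = 0$ and the claimed estimate.

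The main obstacle is making the Fourier-multiplier estimate rigorous in the presence of both the $y_2$-dependent principal part and the sign-jump coefficients at $\{y_3 = 0\}$. Handling the $y_2$-dependence requires a careful freezing-and-commutator argument to check that the $O(|\tau|^{-1})$ gain survives (the commutators involve $\eta'', \eta'''$ but only at lower order in $\tau$), and handling the jump requires either a distributional analysis of the extended operator across $\{y_3 = 0\}$ or, equivalently, solving the half-space Dirichlet problem directly via a sine transform in $y_3$ combined with Fourier transform in $y_1$. Once this core $L^2 \to L^2$ estimate with the gain $|\tau|^{-1}$ is established, the rest of the argument is standard bookkeeping.
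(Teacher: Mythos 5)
Your overall skeleton is the same as the paper's: flatten $\Gamma_0$ by $(x_1,x_2,x_3)\mapsto(x_1,x_2,x_3-\eta(x_2))$, extend the operator across $\{y_3=0\}$ in the parity-respecting way (your $\mathrm{sgn}(y_3)$-extension of the $\partial_{y_2}\partial_{y_3}$ and $\partial_{y_3}$ coefficients is exactly the Laplace--Beltrami operator of the reflected metric $R^*\tilde{g}$ used in the paper), oddly reflect $m$, solve the conjugated equation on the doubled region with a $1/\abs{\tau}$ gain, and restrict back. The genuine gap is in the core solvability step. The estimate you quote for the frozen-coefficient symbol gains $1/\abs{\tau}$ only at the level of $L^2\to L^2$ (and nothing at the level of two derivatives: the corresponding $H^2$-type bound loses a factor $\tau$), while the errors you must absorb are \emph{second order}: unfreezing $y_2$ produces terms like $(\eta'(y_2)^2-\eta'(y_2^0)^2)\partial_{y_3}^2$ and $(\eta'(y_2)-\eta'(y_2^0))\partial_{y_2}\partial_{y_3}$, and the interface produces the term $2\,\mathrm{sgn}(y_3)\,\eta'(y_2)\,\partial_{y_2}\partial_{y_3}$, whose coefficient has a jump of size $2\abs{\eta'(y_2)}$ that cannot be made small by localization. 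None of these is ``lower order relative to the principal part,'' so the Neumann-series absorption does not close; in addition, the Sylvester--Uhlmann bound is naturally a weighted estimate $L^2_{\delta}\to L^2_{-\delta}$, which does not iterate. Your fallback suggestion (sine transform in $y_3$ plus Fourier transform in $y_1$) also fails for the same structural reason: the cross term $\partial_{y_2}\partial_{y_3}$ maps sine modes to cosine modes, so it is not diagonalized and again enters as a non-perturbative second-order error.

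The paper's resolution of exactly this difficulty is the idea missing from your sketch: since the flattening map leaves $x_1$ untouched, the doubled metric has the block form $\hat{g}=1\oplus\hat{g}_0(y_2,y_3)$, and one should do no Fourier analysis at all in the transversal variables. Instead, the conjugated equation is solved on an infinite cylinder $\mR\times\hat{\Omega}_0$ containing the doubled domain by expanding in the Dirichlet eigenfunctions of the full transversal operator $-\Delta_{\hat{g}_0}$ on the bounded cross-section and taking the Fourier transform only in $y_1$ (Proposition \ref{prop_correction_fourier_nonsmooth}, following \cite{KSaU}); this diagonalizes the transversal part regardless of the variable, non-smooth coefficients and of the interface, and yields $\norm{\hat{r}}_{L^2}\leq C\abs{\tau}^{-1}\norm{\hat{f}}_{L^2}$ under the harmless condition $\tau^2\notin\mathrm{Spec}(-\Delta_{\hat{g}_0})$, which replaces your symbol non-vanishing. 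A further, smaller difference: the paper does not need $\hat{r}$ to be odd (so no uniqueness/parity-preservation claim for the solution operator is required); it antisymmetrizes the full solution, setting $\tilde{u}=\hat{u}-R^*\hat{u}$ with $\hat{u}=e^{-\tau x_1}(\hat{m}+\hat{r})$, which vanishes on the interface because $R$ is an isometry of the doubled data, while the odd reflection of $m$ guarantees the amplitude restricts to $\tilde{m}$ rather than to zero. I would recommend replacing your freezing-and-commutator step by this transversal eigenfunction expansion, and your parity argument by the antisymmetrization.
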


The proof involves a reflection argument that reduces the construction of the correction term to the problem of solving a conjugated equation with anisotropic metric, 
$$
e^{\tau x_1}(-\Delta_{\hat{g}} + \hat{q})(e^{-\tau x_1} \hat{r}) = \hat{f} \quad \text{in } \mR \times \hat{\Omega}_0
$$
where $\hat{\Omega}_0 \subset \mR^2$ is a bounded open set and $\hat{g}$ is a metric of the form 
$$
\hat{g}(y_1,y') = \left( \begin{array}{cc} 1 & 0 \\ 0 & \hat{g}_0(y') \end{array} \right),
$$
and where $g_0$ is smooth for $y_3 \neq 0$ but only Lipschitz continuous across $\{ y_3 = 0 \}$. In three and higher dimensions, it is not known how to handle equations of this type with general Lipschitz coefficients in the second order part (the case of $C^1$ coefficients, and also Lipschitz coefficients with a smallness condition, is considered in \cite{HT}). However, in our case the singularity of $\hat{g}$ only appears in the lower right block $\hat{g}_0$, and this turns out not to be a problem.

The following is an analogue of \cite[Proposition 4.1]{KSaU}, the main difference being that the transversal metric is only Lipschitz. (With correct definitions, one could easily deal with $L^{\infty}$ transversal metrics as well, but then the solution would only be in $H^1_{-\delta}(T)$.) Here we write $(x_1,x')$ for coordinates in $T = \mR \times M_0$, and for $\delta \in \mR$ we consider the spaces 
$$
\norm{f}_{L^2_{\delta}(T)} = \norm{\br{x_1}^{\delta} f}_{L^2(T)}, \quad \norm{f}_{H^1_{\delta}(T)} = \norm{f}_{L^2_{\delta}(T)} + \norm{df}_{L^2_{\delta}(T)}
$$
with $\br{t} = (1+t^2)^{1/2}$, and similarly for $H^2_{\delta}(T)$. We also write $\text{Spec}(-\Delta_{g_0})$ for the set of Dirichlet eigenvalues of the Laplace-Beltrami operator $-\Delta_{g_0}$ in $(M_0,g_0)$.

\begin{prop} \label{prop_correction_fourier_nonsmooth}
Let $T = \mR \times M_0$ with metric $g = e \oplus g_0$, where $(M_0,g_0)$ is a compact oriented manifold with smooth boundary and $g_0$ is a Lipschitz continuous Riemannian metric on $M_0$. Given any $q \in L^{\infty}_{comp}(T)$ and any $\delta > 1/2$, there are constants $C_0, \tau_0 > 0$ such that whenever 
$$
\abs{\tau} \geq \tau_0 \text{ and } \tau^2 \notin \mathrm{Spec}(-\Delta_{g_0}),
$$
the equation 
$$
e^{\tau x_1}(-\Delta_g + q)(e^{-\tau x_1} r) = f \quad \text{in } T
$$
has a unique solution $r \in H^1_{-\delta}(T)$ with $r|_{\partial T} = 0$ for any $f \in L^2_{\delta}(T)$. Moreover, $r \in H^2_{-\delta}(T)$, and one has the bounds 
$$
\norm{r}_{L^2_{-\delta}(T)} \leq \frac{C_0}{\abs{\tau}} \norm{f}_{L^2_{\delta}(T)}, \quad \norm{r}_{H^1_{-\delta}(T)} \leq C_0 \norm{f}_{L^2_{\delta}(T)}.
$$
\end{prop}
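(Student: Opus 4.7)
The plan is to separate variables by Fourier transform in $x_1$ and Dirichlet eigenfunction expansion on $(M_0,g_0)$, invert each resulting scalar mode via a weighted one-dimensional multiplier estimate, and then absorb the potential by a Fredholm/uniqueness argument. First, since $g_0$ is Lipschitz and $\partial M_0$ smooth, the Dirichlet Laplacian $-\Delta_{g_0}$ is self-adjoint with compact resolvent, and classical elliptic regularity for divergence-form operators with Lipschitz coefficients provides an $L^2(M_0)$-orthonormal basis $\{\phi_j\}$ of eigenfunctions with $\phi_j \in H^2(M_0) \cap H^1_0(M_0)$ and eigenvalues $0 < \lambda_1 \leq \lambda_2 \leq \cdots \to \infty$; we will also use the identity $\norm{d_{x'}u}_{L^2(M_0)}^2 = \sum_j \lambda_j |u_j|^2$ for $u = \sum_j u_j\phi_j$.

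Conjugation gives $e^{\tau x_1}(-\Delta_g)e^{-\tau x_1} = -\partial_1^2 + 2\tau\partial_1 - \tau^2 - \Delta_{g_0}$. Expanding $r = \sum_j r_j(x_1)\phi_j$ and $f = \sum_j f_j(x_1)\phi_j$ and taking Fourier transform in $x_1$ reduces the case $q=0$ with Dirichlet condition to the decoupled family
$$
\sigma_j(\xi_1)\,\hat{r}_j(\xi_1) = \hat{f}_j(\xi_1), \qquad \sigma_j(\xi_1) := \lambda_j + (\xi_1+i\tau)^2.
$$
The assumption $\tau^2 \notin \mathrm{Spec}(-\Delta_{g_0})$ ensures $\sigma_j$ has no real zero, and a standard weighted one-dimensional multiplier estimate of Agmon/Sylvester--Uhlmann type (as used in \cite{KSaU}) shows that for $\delta > 1/2$ the multipliers with symbols $1/\sigma_j$, $\xi_1/\sigma_j$, and $\sqrt{\lambda_j}/\sigma_j$ are bounded $L^2_\delta(\mR) \to L^2_{-\delta}(\mR)$ with norms $O(|\tau|^{-1})$, $O(1)$, and $O(1)$ respectively, uniformly in $j \geq 1$ once $|\tau| \geq \tau_0$. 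Setting $\hat{r}_j := \hat{f}_j/\sigma_j$ and resumming via Parseval in $x'$ yields a candidate $r \in H^1_{-\delta}(T)$ with $r|_{\partial T}=0$ built in from the Dirichlet expansion and satisfying the claimed $L^2_{-\delta}$ and $H^1_{-\delta}$ bounds.

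To include the potential, note that by the above $L_0 : r \mapsto e^{\tau x_1}(-\Delta_g)(e^{-\tau x_1}r)$ is a Banach isomorphism from $\{r \in H^1_{-\delta}(T) : r|_{\partial T}=0\}$ onto $L^2_\delta(T)$. Since $q \in L^{\infty}_{\mathrm{comp}}(T)$, multiplication by $q$ is compact from $H^1_{-\delta}(T)$ into $L^2_\delta(T)$ by Rellich's theorem on $\supp q$, so $L_q := L_0 + q$ is Fredholm of index zero. Injectivity in $H^1_{-\delta}(T)$ follows because the Fourier coefficients $\hat{r}_j(\xi_1)$ of any homogeneous solution $r$ extend analytically off $\mR$ (the source $-qr$ has compact $x_1$-support, so its Fourier transform is entire of exponential type) and a Paley--Wiener/contour-shift argument as in \cite{KSaU} forces $r \equiv 0$; thus $L_q$ is bijective with the claimed estimates. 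Finally, rewriting $-\Delta_g r = f - 2\tau\partial_1 r + \tau^2 r - qr \in L^2_\delta(T)$ and applying classical $H^2$ regularity for divergence-form Dirichlet problems with Lipschitz coefficients, slicewise in $x_1$, upgrades $r$ to $H^2_{-\delta}(T)$.

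The main obstacle is the uniqueness step in the weighted space $H^1_{-\delta}(T)$: solutions are not globally square-integrable, so a Carleman estimate on the whole cylinder is unavailable, and one must instead exploit the product structure through analyticity of the Fourier transform in $x_1$ coming from the compact support of $q$. A secondary technical point is that the $H^2$ regularity uses Lipschitz-coefficient elliptic theory in place of the smooth theory of related propositions in \cite{KSaU}; this is classical for divergence-form operators, but should be invoked explicitly.
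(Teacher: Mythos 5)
Your core reduction --- the Dirichlet eigenfunction expansion on $(M_0,g_0)$ (valid for Lipschitz $g_0$ via weak solutions) followed by the one-dimensional weighted multiplier estimates for $\sigma_j(\xi_1)=\lambda_j+(\xi_1+i\tau)^2$, uniform in $j$, with norms $O(\abs{\tau}^{-1})$ and $O(1)$ --- is exactly the paper's route (it defers to \cite{KSaU}), and the $H^2_{-\delta}$ upgrade via Lipschitz-coefficient elliptic regularity is also in line with the paper. The gap is in your treatment of the potential. Your claim that $L_0$ is a Banach isomorphism from $\{r\in H^1_{-\delta}(T):r|_{\partial T}=0\}$ onto $L^2_{\delta}(T)$ is false: $L_0$ is second order, so it does not map $H^1_{-\delta}$ into $L^2$ at all, and even on $H^2_{-\delta}$ its image only lies in $L^2_{-\delta}$, not in the smaller space $L^2_{\delta}$ (the weights go the wrong way). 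What the multiplier estimates give is a bounded solution operator $L^2_{\delta}\to H^1_{-\delta}$ whose range is a proper subspace; to make your Fredholm argument meaningful you would have to pass to the graph space $\{r\in H^1_{-\delta}: r|_{\partial T}=0,\ L_0r\in L^2_{\delta}\}$, which is not what you set up, and even then bijectivity via Fredholm plus the open mapping theorem would not by itself give constants uniform in $\tau$.

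More seriously, the injectivity step cannot work as described. If $(L_0+q)r=0$, then mode-wise $\hat r_j=\hat h_j/\sigma_j$ with $h=-qr$, but $h$ is unknown (the modes are coupled by $q$), and entirety of $\hat h_j$ plus a contour shift yields no contradiction; indeed nothing in that argument uses $\abs{\tau}\geq\tau_0$, whereas the uniqueness asserted in the proposition genuinely requires $\tau_0$ large depending on $q$ (for moderate $\tau$ the perturbed operator may have nontrivial kernel in the weighted space). The correct mechanism --- and the one behind the paper's remark that it is enough to consider $q=0$ --- is absorption: since $q\in L^{\infty}_{comp}(T)$ one has $\norm{qr}_{L^2_{\delta}(T)}\leq C_q\norm{r}_{L^2_{-\delta}(T)}$, so a homogeneous solution satisfies $\norm{r}_{L^2_{-\delta}}\leq C\abs{\tau}^{-1}C_q\norm{r}_{L^2_{-\delta}}$ by the free estimate, forcing $r=0$ once $\abs{\tau}\geq\tau_0$; the same contraction (Neumann series) argument gives existence and the uniform bounds $C_0/\abs{\tau}$ and $C_0$ claimed in the statement. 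Replacing your Fredholm-plus-analyticity step by this absorption argument closes the gap and brings the proof in line with the paper's.
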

\begin{proof}
The proof is almost exactly the same as the proof of \cite[Proposition 4.1]{KSaU}, and we only give the main idea. Since $\Delta_g = \partial_{x_1}^2 + \Delta_{g_0}$, the equation that we need to solve is 
$$
(-\partial_{x_1}^2 + 2 \tau \partial_{x_1} - \Delta_{g_0} - \tau^2 + q) r = f \quad \text{in } T.
$$
It is enough to consider $q = 0$. The standard argument based on weak solutions shows that even when $g_0$ has very little regularity, there is an orthonormal basis of $L^2(M_0)$ consisting of Dirichlet eigenfunctions of $-\Delta_{g_0}$, 
$$
-\Delta_{g_0} \phi_l = \lambda_l \phi_l \ \ \text{in } M_0, \quad \phi_l \in H^1_0(M_0),
$$
where $0 < \lambda_1 \leq \lambda_2 \leq \lambda_3 \leq \ldots \to \infty$ are the Dirichlet eigenvalues of $-\Delta_{g_0}$ in $M_0$.

Considering the partial Fourier expansions 
$$
r(x_1,x') = \sum_{l=1}^{\infty} \tilde{r}(x_1,l) \phi_l(x'), \quad f(x_1,x') = \sum_{l=1}^{\infty} \tilde{f}(x_1,l) \phi_l(x'),
$$
it is enough to solve 
$$
(-\partial_{x_1}^2 + 2 \tau \partial_{x_1} + \lambda_l - \tau^2) \tilde{r}(\,\cdot\,,l) = \tilde{f}(\,\cdot\,,l) \quad \text{in } \mR \text{ for all $l$.}
$$
The condition $\tau^2 \notin \mathrm{Spec}(-\Delta_{g_0})$ allows to solve these ordinary differential equations by the Fourier transform as in \cite[Section 4]{KSaU}, and the estimates given there imply that one obtains a unique solution $r \in H^1_{-\delta}(T)$ with $r|_{\partial T} = 0$ satisfying the required bounds. Elliptic $H^2$ regularity also works with Lipschitz $g_0$, and the argument in \cite[Section 4]{KSaU} gives that $r \in H^2_{-\delta}(T)$.
\end{proof}

\begin{proof}[Proof of Proposition \ref{prop_cgo_reflection}]
We begin by flattening $\Gamma_0$ via the map 
$$
\Phi: \mR^3 \to \mR^3, \ \ (x_1,x_2,x_3) \mapsto (x_1, x_2, x_3 - \eta(x_2)).
$$
Let $\tilde{\Omega} = \Phi(\Omega)$, write $y$ for coordinates in $\tilde{\Omega}$, and let $R$ be the reflection 
$$
R(y_1,y_2,y_3) = (y_1,y_2,-y_3).
$$
Note that $\tilde{\Omega} \subset \{ y_3 > 0 \}$. Consider the reflected domain $\tilde{\Omega}^* = R(\tilde{\Omega})$, so $\tilde{\Omega}^* \subset \{ y_3 < 0 \}$, and let $U$ the double domain $\tilde{\Omega} \cup \Phi(\Gamma_0)^{\text{int}} \cup \tilde{\Omega}^*$.

Let $\Psi = \Phi^{-1}$, let $\tilde{g} = \Psi^* e$ be the metric in $\tilde{\Omega}$ that is the pullback of the Euclidean metric in $\Omega$, let $\tilde{q} = \Psi^* q$, and let $\tilde{m} = \Psi^* m$. In the double domain $U$, we use even reflection to define the quantities 
$$
\hat{g} = \left\{ \begin{array}{cl} \tilde{g}, & y_3 > 0, \\ R^* \tilde{g}, & y_3 < 0, \end{array} \right. \quad \hat{q} = \left\{ \begin{array}{cl} \tilde{q}, & y_3 > 0, \\ R^* \tilde{q}, & y_3 < 0, \end{array} \right.
$$
and odd reflection to define the amplitude 
$$
\hat{m} = \left\{ \begin{array}{cl} \frac{1}{2} \tilde{m}, & y_3 > 0, \\ -\frac{1}{2} R^* \tilde{m}, & y_3 < 0. \end{array} \right.
$$
Since the flattening map $\Phi$ leaves $x_1$ intact, we have 
$$
\hat{g}(y_1,y') = \left( \begin{array}{cc} 1 & 0 \\ 0 & \hat{g}_0(y') \end{array} \right)
$$
where $\hat{g}_0$ is a Lipschitz continuous metric only depending on $y_2$ and $y_3$. (In fact, $\tilde{g}$ and $\tilde{g}_0$ are well defined in $\{ y_3 > 0 \}$ by the flattening map $\Phi$ and the Euclidean metric in $\{ x_3 > \eta(x_2) \}$.) Also, $\hat{q} \in L^{\infty}(U)$, and $\hat{m} \in H^2(U)$ by the boundary condition $m|_{\Gamma_0} = 0$ and by the properties of odd reflection.

We wish to find $\hat{r} \in H^1(U)$ satisfying 
$$
e^{\tau x_1}(-\Delta_{\hat{g}} + \hat{q})(e^{-\tau x_1} \hat{r}) = \hat{f}
$$
where $\hat{f} = -e^{\tau x_1}(-\Delta_{\hat{g}} + \hat{q})(e^{-\tau x_1} \hat{m})$. Now 
\begin{align*}
\norm{\hat{f}}_{L^2(U)} &= \norm{\hat{f}}_{L^2(\tilde{\Omega})} + \norm{\hat{f}}_{L^2(\tilde{\Omega}^*)} \\
 &= \norm{\Psi^*(e^{\tau x_1}(-\Delta + q)(e^{-\tau x_1} m)}_{L^2(\tilde{\Omega})} \\
 &\qquad + \norm{R^* \Psi^*(e^{\tau x_1}(-\Delta + q)(e^{-\tau x_1} m)}_{L^2(\tilde{\Omega}^*)} \\
 &\leq C \norm{e^{\tau x_1}(-\Delta + q)(e^{-\tau x_1} m)}_{L^2(\Omega)}.
\end{align*}
Choose a bounded open set $\hat{\Omega}_0 \subset \mR^2$ such that 
$$
U \subset \subset \mR \times \hat{\Omega}_0,
$$
and let $\hat{g}_0$ be the metric in $\hat{\Omega}_0$ that is the even extension of $\tilde{g}_0$ from $\{ y_3 > 0 \}$ to $\hat{\Omega}_0$. Then $\hat{g}_0$ is smooth for $y_3 \neq 0$ and Lipschitz continuous across $\{ y_3 = 0 \}$. Extending $\hat{g}$ to $\mR \times \hat{\Omega}_0$ using the block structure and extending $\hat{q}$ and $\hat{f}$ by zero to $\mR \times \hat{\Omega}_0$, it is enough to find a solution $\hat{r} \in H^2_{loc}(\mR \times \hat{\Omega}_0)$ of the equation 
\begin{equation}
e^{\tau x_1}(-\Delta_{\hat{g}} + \hat{q})(e^{-\tau x_1} \hat{r}) = \hat{f} \quad \text{in } \mR \times \hat{\Omega}_0.
\end{equation}
Such a solution may be found by Proposition \ref{prop_correction_fourier_nonsmooth}, and denoting by $\hat{r}$ its restriction to $U$ we have 
$$
\norm{\hat{r}}_{L^2(U)} \leq \frac{C}{\abs{\tau}} \norm{\hat{f}}_{L^2(U)}.
$$

Define now 
$$
\hat{u} = e^{-\tau x_1}(\hat{m} + \hat{r}) \quad \text{in } U
$$
and 
$$
\tilde{u} = \hat{u} - R^* \hat{u} \quad \text{in } \tilde{\Omega}.
$$
Then $(-\Delta_{\hat{g}} + \hat{q}) \hat{u} = 0$ in $U$, and $(-\Delta_{\tilde{g}} + \tilde{q}) \tilde{u} = 0$ in $\tilde{\Omega}$ by the definition of $\hat{g}$ and $\hat{q}$ and using that $\hat{u} \in H^2(U)$. We also have 
$$
\tilde{u} = e^{-\tau x_1}(\hat{m} - R^* \hat{m} + \hat{r} - R^* \hat{r}) \quad \text{in } \tilde{\Omega}.
$$
But here $\hat{m} - R^* \hat{m}|_{\tilde{\Omega}} = \tilde{m}$ by the definition of $\hat{m}$. Consequently, if we define $u = \Phi^* \tilde{u}$, then $(-\Delta+q)u = 0$ in $\Omega$ and 
$$
u = e^{-\tau x_1}(m + r) \quad \text{in } \Omega,
$$
where $r = \Phi^*(\hat{r} - R^* \hat{r})$ satisfies 
\begin{align*}
\norm{r}_{L^2(\Omega)} &\leq C \norm{\hat{r}}_{L^2(U)} \leq \frac{C}{\abs{\tau}} \norm{\hat{f}}_{L^2(U)} \\
 &\leq \frac{C}{\abs{\tau}} \norm{e^{\tau x_1}(-\Delta + q)(e^{-\tau x_1} m)}_{L^2(\Omega)}.
\end{align*}
This proves the result.
\end{proof}

Note how the odd reflection of the amplitude $m$ in the proof ensured that the solution obtained by reflection is not the zero solution. We also remark that under certain conditions, the arguments in Sections \ref{sec_simple} and \ref{sec_gaussian} allow to construct amplitudes $m$ vanishing on a part $\Gamma_0$ as above.

\section{Local uniqueness on simple manifolds} \label{sec_simple}

In this section we prove Theorems \ref{theorem_main_local_integrals}--\ref{theorem_main_global}. In these results the transversal manifold is assumed to be simple and we only use non-reflected geodesics. This case already illustrates the main features of the approach, and we can use a quasimode construction that is much easier than the Gaussian beam one used for non-simple transversal manifolds and reflected geodesics.

The first observation is the usual integral identity.

\begin{prop} \label{prop_integralidentity}
If $\Gamma_D, \Gamma_N \subset \partial M$ are open and if $C_{g,q_1}^{\Gamma_D, \Gamma_N} = C_{g,q_2}^{\Gamma_D, \Gamma_N}$, then 
$$
\int_M (q_1-q_2) u_1 u_2 \,dV_g = 0
$$
for any $u_j \in H_{\Delta_g}(M)$ satisfying $(-\Delta_g + q_j) u_j = 0$ in $M$ and 
$$
\supp(u_1|_{\partial M}) \subset \Gamma_D, \quad \supp(u_2|_{\partial M}) \subset \Gamma_N.
$$
\end{prop}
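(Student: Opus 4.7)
The plan is to use the standard trick of converting equality of Cauchy data into vanishing of a pairing between two solutions via Green's formula. The only real work is checking that the boundary contributions vanish despite the rather low regularity of the solutions in $H_{\Delta_g}(M)$.

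First, given $u_1 \in H_{\Delta_g}(M)$ with $(-\Delta_g + q_1) u_1 = 0$ and $\supp(u_1|_{\partial M}) \subset \Gamma_D$, the pair $(u_1|_{\Gamma_D}, \partial_\nu u_1|_{\Gamma_N})$ lies in $C_{g,q_1}^{\Gamma_D,\Gamma_N} = C_{g,q_2}^{\Gamma_D,\Gamma_N}$. Hence I can produce $v \in H_{\Delta_g}(M)$ solving $(-\Delta_g + q_2) v = 0$ with $\supp(v|_{\partial M}) \subset \Gamma_D$, $v|_{\Gamma_D} = u_1|_{\Gamma_D}$, and $\partial_\nu v|_{\Gamma_N} = \partial_\nu u_1|_{\Gamma_N}$. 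Setting $w = v - u_1$, both traces $v|_{\partial M}, u_1|_{\partial M}$ have support in $\Gamma_D$ and coincide there, so $w|_{\partial M} = 0$ in $H^{-1/2}(\partial M)$. Combined with $\Delta_g w \in L^2(M)$, the standard elliptic boundary regularity for the Dirichlet problem (using that $w$ lies in $H_{\Delta_g}(M)$ with trace of $H^{3/2}$ regularity) upgrades $w$ to $H^2(M) \cap H^1_0(M)$. A direct computation gives
$$
(-\Delta_g + q_2) w = -(-\Delta_g + q_2) u_1 = (q_1 - q_2) u_1.
$$

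Next I apply Green's formula. Since $w \in H^2(M)$ and $u_2 \in H_{\Delta_g}(M)$, the identity
$$
\int_M [(-\Delta_g + q_2) w]\, u_2 \, dV_g - \int_M w\, [(-\Delta_g + q_2) u_2]\, dV_g = \langle \partial_\nu w, u_2|_{\partial M} \rangle_{\partial M} - \langle w|_{\partial M}, \partial_\nu u_2 \rangle_{\partial M}
$$
is valid, where the first pairing is between $H^{1/2}(\partial M)$ and $H^{-1/2}(\partial M)$ and the second between $H^{3/2}(\partial M)$ and $H^{-3/2}(\partial M)$. The right-hand side of the equation $(-\Delta_g + q_2) u_2 = 0$ kills the second bulk integral, and $w|_{\partial M} = 0$ kills the second boundary pairing.

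The one step that needs a little care is the remaining boundary term $\langle \partial_\nu w, u_2|_{\partial M} \rangle_{\partial M}$. By construction, $\partial_\nu w|_{\Gamma_N} = 0$, and by hypothesis $\supp(u_2|_{\partial M}) \subset \Gamma_N$. I choose $\chi \in C_c^\infty(\Gamma_N)$ with $\chi \equiv 1$ on a neighborhood of $\supp(u_2|_{\partial M})$, so that $u_2|_{\partial M} = \chi\, u_2|_{\partial M}$ as elements of $H^{-1/2}(\partial M)$. Then
$$
\langle \partial_\nu w, u_2|_{\partial M} \rangle_{\partial M} = \langle \chi \, \partial_\nu w, u_2|_{\partial M} \rangle_{\partial M} = 0,
$$
because $\chi \partial_\nu w$ is an $H^{1/2}$ function supported in $\Gamma_N$ where $\partial_\nu w$ vanishes. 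Putting everything together yields $\int_M (q_1 - q_2) u_1 u_2 \, dV_g = 0$.

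The main (mild) obstacle is this last cutoff/duality manipulation: one must handle the mismatch between the distributional nature of $u_2|_{\partial M}$ and the pointwise statement $\partial_\nu w|_{\Gamma_N} = 0$. The localization by $\chi$ inside the open set $\Gamma_N$ takes care of it, and no further subtlety arises.
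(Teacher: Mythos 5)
Your argument is correct and follows essentially the same route as the paper: you use the equality of Cauchy data to produce a solution of the $q_2$-equation matching the Cauchy data of $u_1$, observe that the difference $w$ has vanishing trace and hence lies in $H^2(M)$, and then integrate by parts against $u_2$, killing the boundary terms via $\partial_\nu w|_{\Gamma_N}=0$ and $\supp(u_2|_{\partial M})\subset \Gamma_N$ (your cutoff argument with $\chi$ just makes the paper's terse final step explicit). The only blemish is a harmless sign error in your Green's identity (the boundary side should be $\langle w,\partial_\nu u_2\rangle - \langle \partial_\nu w, u_2\rangle$), which does not affect the conclusion since both boundary pairings vanish.
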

\begin{proof}
Let $u_j$ be as stated. Since $C_{g,q_1}^{\Gamma_D, \Gamma_N} = C_{g,q_2}^{\Gamma_D, \Gamma_N}$, there is a function $\tilde{u}_2 \in H_{\Delta}(M)$ with $(-\Delta + q_2) \tilde{u}_2 = 0$ in $M$, $\supp(\tilde{u}_2|_{\partial M}) \subset \Gamma_D$, and 
$$
(u_1|_{\Gamma_D}, \partial_{\nu} u_1|_{\Gamma_N}) = (\tilde{u}_2|_{\Gamma_D}, \partial_{\nu} \tilde{u}_2|_{\Gamma_N}).
$$
Using that $u_1$, $u_2$ and $\tilde{u}_2$ are solutions, we have 
\begin{align*}
\int_M (q_1-q_2) u_1 u_2 \,dV &= \int_M \left[ (\Delta u_1) u_2 - u_1 (\Delta u_2) \right] \,dV \\
 &= \int_M \left[ (\Delta (u_1-\tilde{u}_2)) u_2 - (u_1-\tilde{u}_2) (\Delta u_2) \right] \,dV.
\end{align*}
Now $u_1-\tilde{u}_2|_{\partial M} = 0$, so in fact $u_1-\tilde{u}_2 \in H^2(M)$ by the properties of the space $H_{\Delta}(M)$. Recall also that $C^{\infty}(M)$ is dense in $H_{\Delta}(M)$ and that $u_2|_{\partial M} \in H^{-1/2}(\partial M)$ and $\partial_{\nu} u_2|_{\partial M} \in H^{-3/2}(\partial M)$. These facts make it possible to integrate by parts, and we obtain that 
$$
\int_M (q_1-q_2) u_1 u_2 \,dV = \int_{\partial M} \left[ (\partial_{\nu} (u_1-\tilde{u}_2)) u_2 - (u_1-\tilde{u}_2) (\partial_{\nu} u_2) \right] \,dS
$$
in the weak sense. The last expression vanishes since $\partial_{\nu} (u_1-\tilde{u}_2)|_{\Gamma_N} = 0$ and $\supp(u_2|_{\partial M}) \subset \Gamma_N$.
\end{proof}

The next result will be used to pass from the metric $g = c(e \oplus g_0)$ to the slightly simpler metric $\tilde{g} = e \oplus g_0$.

\begin{lemma} \label{lemma_conformal_scaling_solutions}
Let $c$ be a smooth positive function in $M$. Then $u \in H_{\Delta_g}(M)$ satisfies $(-\Delta_g+q)u = 0$ in $M$ if and only if $\tilde{u} \in H_{\Delta_{\tilde{g}}}(M)$ satisfies $(-\Delta_{\tilde{g}}+\tilde{q}) \tilde{u} = 0$ in $M$, where 
$$
\tilde{g} = c^{-1} g, \quad \tilde{u} = c^{\frac{n-2}{4}} u, \quad \tilde{q} = c(q - c^{\frac{n-2}{4}} \Delta_g(c^{-\frac{n-2}{4}})).
$$
\end{lemma}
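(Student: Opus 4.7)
The statement is the classical conformal-rescaling identity for Schr\"odinger operators, so my plan is a direct algebraic computation. Set $\alpha = (n-2)/4$, so that $\tilde u = c^\alpha u$ and $u = c^{-\alpha}\tilde u$. The core identity I will verify pointwise is
$$
c^{\alpha+1}(-\Delta_g + q)(c^{-\alpha} \tilde u) = (-\Delta_{\tilde g} + \tilde q)\tilde u.
$$
Once this is established, the lemma follows: because $c$ is smooth and positive, multiplication by $c^{\pm\alpha}$ is a topological isomorphism of $L^2(M)$, and the identity shows that $\Delta_g u\in L^2(M)$ if and only if $\Delta_{\tilde g}\tilde u\in L^2(M)$; hence $u \leftrightarrow \tilde u$ gives a bijection $H_{\Delta_g}(M)\to H_{\Delta_{\tilde g}}(M)$ that intertwines the two equations.

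To establish the identity, I would proceed in two steps. First, the Leibniz rule gives
$$
\Delta_g(c^{-\alpha}\tilde u) = c^{-\alpha}\Delta_g\tilde u + 2\langle dc^{-\alpha}, d\tilde u\rangle_g + \tilde u\,\Delta_g(c^{-\alpha}).
$$
Second, I would derive the conformal transformation law by direct computation in local coordinates: since $\tilde g_{ij} = c^{-1}g_{ij}$ implies $\tilde g^{ij} = c\,g^{ij}$ and $\sqrt{|\tilde g|}=c^{-n/2}\sqrt{|g|}$, expanding $\Delta_g v = |g|^{-1/2}\partial_i(|g|^{1/2}g^{ij}\partial_j v)$ yields
$$
\Delta_g v = c^{-1}\Delta_{\tilde g} v + \frac{n-2}{2\,c^{2}}\langle dc, dv\rangle_{\tilde g}.
$$
Substituting this with $v=\tilde u$ into the Leibniz expansion and converting $\langle\cdot,\cdot\rangle_g = c^{-1}\langle\cdot,\cdot\rangle_{\tilde g}$, the two first-order terms in $\tilde u$ combine into
$$
\left(\tfrac{n-2}{2} - 2\alpha\right)\,c^{-\alpha-2}\langle dc,d\tilde u\rangle_{\tilde g}.
$$
The conformal weight $\alpha = (n-2)/4$ is precisely the value that makes this coefficient vanish; this cancellation is the only nontrivial point of the computation.

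After the first-order terms drop out, multiplying the resulting identity by $c^{\alpha+1}=c^{(n+2)/4}$ leaves
$$
c^{\alpha+1}\Delta_g(c^{-\alpha}\tilde u) = \Delta_{\tilde g}\tilde u + c^{\alpha+1}\Delta_g(c^{-\alpha})\,\tilde u,
$$
so adding $c\,q\tilde u$ to both sides and rearranging produces $(-\Delta_{\tilde g}+\tilde q)\tilde u$ with $\tilde q = cq - c^{\alpha+1}\Delta_g(c^{-\alpha}) = c\bigl(q - c^{(n-2)/4}\Delta_g(c^{-(n-2)/4})\bigr)$, exactly as stated. The equivalence of the two equations in $H_{\Delta_g}(M)$ and $H_{\Delta_{\tilde g}}(M)$ is now immediate, and there is no analytical subtlety because $c$ is smooth so all objects remain in the same function space; the only obstacle is bookkeeping with the conformal weights.
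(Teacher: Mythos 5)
Your proof is correct and follows essentially the same route as the paper: the paper's proof simply cites the identity $c^{\frac{n+2}{4}}(-\Delta_g+q)(c^{-\frac{n-2}{4}}v) = (-\Delta_{\tilde g}+\tilde q)v$ for smooth $v$ and passes to $u \in H_{\Delta_g}(M)$ by approximation with smooth functions, while you verify that identity by the explicit conformal computation (with the same cancellation at weight $(n-2)/4$) and handle the $H_{\Delta}$ equivalence via the smoothness and positivity of $c$. The only cosmetic difference is that the paper makes the extension to non-smooth $u$ explicit through density of $C^{\infty}(M)$ in $H_{\Delta_g}(M)$, whereas you appeal to the fact that the identity, being one of differential operators with smooth coefficients, transfers directly; both are fine.
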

\begin{proof}
This follows from the identity for $v \in C^{\infty}(M)$, 
$$
c^{\frac{n+2}{4}} (-\Delta_g + q) (c^{-\frac{n-2}{4}} v) = (-\Delta_{c^{-1} g} + c(q-c^{\frac{n-2}{4}} \Delta_g(c^{-\frac{n-2}{4}})))v,
$$
upon approximating $u$ or $\tilde{u}$ by smooth functions.
\end{proof}

\begin{proof}[Proof of Theorem \ref{theorem_main_local_integrals}]
Let $\tilde{g} = e \oplus g_0$ and $\tilde{q}_j = c(q_j - c^{\frac{n-2}{4}} \Delta_g(c^{-\frac{n-2}{4}}))$. Let $\lambda$ be a fixed real number, and consider the complex frequency 
$$
s = \tau + i\lambda
$$
where $\tau > 0$ will be large. We look for solutions 
\begin{align*}
\tilde{u}_1 &= e^{-s x_1}(v_s(x') + r_1), \\
\tilde{u}_2 &= e^{s x_1}(v_s(x') + r_2),
\end{align*}
of the equations $(-\Delta_{\tilde{g}} + \tilde{q}_1) \tilde{u}_1 = 0$, $(-\Delta_{\tilde{g}} + \overline{\tilde{q}_2}) \tilde{u}_2 = 0$ in $M$. Here $v_s$ will be a quasimode for the Laplacian in $(M_0,g_0)$ that concentrates near the given geodesic $\gamma$. Next we will construct a suitable solution $\tilde{u}_1$, and the case of $\tilde{u}_2$ will be analogous.

Since $\Delta_{\tilde{g}} = \partial_1^2 + \Delta_{g_0}$, the function $\tilde{u}_1$ is a solution if and only if 
\begin{equation} \label{esxone_equation_easy}
e^{s x_1} (-\Delta_{\tilde{g}} + \tilde{q}_1) (e^{-s x_1} r_1) = -(-\Delta_{g_0} + \tilde{q}_1 - s^2) v_s(x') \quad \text{in } M.
\end{equation}
We want to choose $v_s \in C^{\infty}(M_0)$ to satisfy 
\begin{equation} \label{vs_easy_quasimode_bounds}
\norm{v_s}_{L^2(M_0)} = O(1), \quad \norm{(-\Delta_{g_0} - s^2) v_s}_{L^2(M_0)} = O(1)
\end{equation}
as $\tau \to \infty$. Looking for $v_s$ in the form 
$$
v_s = e^{is\psi} a
$$
where $\psi, a \in C^{\infty}(M_0)$, a direct computation shows that 
\begin{multline*}
(-\Delta_{g_0} - s^2) v_s \\
 = e^{is\psi} \left( s^2 \left[ \abs{d\psi}_{g_0}^2 - 1 \right] a - is \left[ 2 \langle d\psi, d\,\cdot\, \rangle_{g_0} + \Delta_{g_0} \psi \right] a - \Delta_{g_0} a \right).
\end{multline*}

Since $(M_0,g_0)$ is simple, it is easy to find $\psi$ and $a$ so that the expressions in brackets will vanish and that the resulting quasimode $v_s$ will concentrate near the geodesic $\gamma$. To do this, let $(\hat{M}_0,g_0)$ be a simple manifold that is slightly larger than $(M_0,g_0)$, extend $\gamma$ as a geodesic in $\hat{M}_0$, and choose $\eps > 0$ such that $\gamma|_{(-2\eps,0) \cup (L,L+2\eps)}$ stays in $\hat{M}_0 \setminus M_0$ (this is possible since $\gamma$ is nontangential). Let $\omega = \gamma(-\eps) \in \hat{M}_0 \setminus M_0$, and let $(r,\theta)$ be polar normal coordinates in $(M_0,g_0)$ with center $\omega$. Then $\gamma$ corresponds to the curve $r \mapsto (r,\theta_0)$ for some fixed $\theta_0 \in S^{n-2}$. We will choose 
\begin{gather*}
\psi(r,\theta) = r, \\
a(r,\theta) = \abs{g_0(r,\theta)}^{-1/4} b(\theta)
\end{gather*}
where $\abs{g_0}$ is the determinant of $g_0$, and $b$ is a fixed function in $C^{\infty}(S^{n-2})$ that is supported so close to $\theta_0$ such that $v_s|_{\partial M_0 \setminus E} = 0$. With these choices, we have as in \cite{DKSaU} 
$$
(-\Delta_{g_0} - s^2) v_s = - e^{is\psi} \Delta_{g_0} a.
$$
Thus $v_s$ satisfies the estimates \eqref{vs_easy_quasimode_bounds}, and also the estimate 
$$
\norm{v_s}_{L^{\infty}(M_0)} = O(1).
$$

We now go back to \eqref{esxone_equation_easy}, and look for a solution in the form $r_1 = e^{i\lambda x_1} r_1'$ where $r_1'$ satisfies 
\begin{equation} \label{roneprime_equation}
e^{\tau x_1} (-\Delta_{\tilde{g}} + \tilde{q}_1) (e^{-\tau x_1} r_1') = f \quad \text{in } M
\end{equation}
with 
$$
f = -e^{-i\lambda x_1}(-\Delta_{g_0} + \tilde{q}_1 - s^2) v_s(x').
$$
We also want to arrange that $\supp(\tilde{u}_1|_{\partial M}) \subset \Gamma_D$ where $\Gamma_D \supset \partial M_- \cup \Gamma_a$. For this purpose, let $\delta > 0$ be a small number to be fixed later, let $S_{\pm}$ and $S_0$ be the sets in Proposition \ref{prop_carleman_solvability} with Carleman weight $\varphi(x) = -x_1$, define 
\begin{gather*}
V^{\delta} = \{ x \in S_- \cup S_0 \,;\, \text{dist}_{\partial M}(x, \Gamma_i) < \delta \text{ or } x \in \partial M_+ \}, \\
\Gamma_a^{\delta} = (S_- \cup S_0) \setminus V_{\delta},
\end{gather*}
and impose the boundary condition 
\begin{equation} \label{roneprime_boundarycondition}
e^{\tau \phi} r_1'|_{S_- \cup S_0} = e^{\tau \phi} f_-
\end{equation}
where 
$$
f_- = \left\{ \begin{array}{cl} -e^{-i\lambda x_1} v_s(x'), & \quad \text{on } V^{\delta}, \\ 0, & \quad \text{on } \Gamma_a^{\delta}. \end{array} \right.
$$
Note that $\partial M_+ \cup \partial M_{tan}$ (these sets refer to the weight $x_1$) is in the interior of $S_- \cup S_0$ in $\partial M$.

We have seen that $\norm{f}_{L^2(M)} = O(1)$ as $\tau \to \infty$. We also have 
$$
f_-|_{\partial M_{tan}} = 0,
$$
since $f_-|_{\Gamma_a^{\delta} \cap \partial M_{tan}} = 0$ by definition and $f_-|_{\partial M_{tan} \cap V_{\delta}} = 0$ for sufficiently small $\delta > 0$ by the construction of $v_s$ and using that $\Gamma_i \subset \mR \times (\partial M_0 \setminus E)$. Since $\norm{f_-}_{L^{\infty}(S_{-} \cup S_0)} \lesssim 1$, we have 
$$
\norm{f_-}_{L^2(S_-)} \lesssim \sigma( \{ \partial_{\nu} x_1 \geq \delta \})
$$
and 
$$
\norm{f_-}_{L^2(S_0)} \lesssim \sigma( \{ -1/(3\tau) < \partial_{\nu} x_1 < 0 \} \cup \{ 0 < \partial_{\nu} x_1 < \delta \} ),
$$
where $\sigma$ is the surface measure on $\partial M$. It follows from Proposition \ref{prop_carleman_solvability} that the equation \eqref{roneprime_equation} has a solution $r_1'$ satisfying the boundary condition \eqref{roneprime_boundarycondition}, and having the estimate 
\begin{multline*}
\norm{r_1'}_{L^2(M)} \lesssim \tau^{-1} + (\delta \tau)^{-1/2}\sigma( \{ \partial_{\nu} x_1 \geq \delta \}) \\
 + \sigma( \{ -1/(3\tau) < \partial_{\nu} x_1 < 0 \}) + \sigma(\{ 0 < \partial_{\nu} x_1 < \delta \} ).
\end{multline*}
The implied constants in the previous inequality are independent of $\tau$ and $\delta$. By the basic properties of measures, for some constant $C_0 > 0$ we have 
$$
\norm{r_1'}_{L^2(M)} \leq C_0 \left[ \tau^{-1} + (\delta \tau)^{-1/2} + o_{\tau \to \infty}(1) + o_{\delta \to 0}(1) \right].
$$
Given $\eps > 0$, we first choose $\delta$ so that $C_0 o_{\delta \to 0}(1) \leq \eps/2$. After this, we choose $\tau > 0$ so large that $C_0 (\tau^{-1} + (\delta \tau)^{-1/2} + o_{\tau \to \infty}(1)) \leq \eps/2$. This shows that 
$$
\lim_{\tau \to \infty} \norm{r_1'(\,\cdot\, ; \tau)}_{L^2(M)} = 0.
$$

Choosing $r_1'$ as described above and choosing $r_1 = e^{i\lambda x_1} r_1'$, we have produced a solution $\tilde{u}_1 \in H_{\Delta_{\tilde{g}}}(M)$ of the equation $(-\Delta_{\tilde{g}} + \tilde{q}_1) \tilde{u}_1 = 0$ in $M$, having the form 
$$
\tilde{u}_1 = e^{-s x_1}(v_s(x') + r_1)
$$
and satisfying 
$$
\supp(\tilde{u}_1|_{\partial M}) \subset \Gamma_D
$$
and $\norm{r_1}_{L^2(M)} = o(1)$ as $\tau \to \infty$. Repeating this construction for the Carleman weight $\phi(x) = x_1$, we obtain a solution $\tilde{u}_2 \in H_{\Delta_{\tilde{g}}}(M)$ of the equation $(-\Delta_{\tilde{g}} + \overline{\tilde{q}_2}) \tilde{u}_2 = 0$ in $M$, having the form 
$$
\tilde{u}_2 = e^{s x_1}(v_s(x') + r_2)
$$
and satisfying 
$$
\supp(\tilde{u}_2|_{\partial M}) \subset \Gamma_N
$$
and $\norm{r_2}_{L^2(M)} = o(1)$ as $\tau \to \infty$.

Writing $u_j = c^{-\frac{n-2}{4}} \tilde{u}_j$, Lemma \ref{lemma_conformal_scaling_solutions} shows that $u_j \in H_{\Delta_g}(M)$ are solutions of $(-\Delta_g + q_1) u_1 = 0$ and $(-\Delta_g + \overline{q_2}) u_2 = 0$ in $M$. Then Proposition \ref{prop_integralidentity} implies that 
$$
\int_M (q_1-q_2) u_1 \overline{u_2} \,dV_g = 0.
$$
We extend $q_1-q_2$ by zero to $\mR \times M_0$. Inserting the expressions for $u_j$, and using that $dV_g = c^{n/2} \,dx_1 \,dV_{g_0}(x')$, we obtain 
$$
\int_{M_0} \int_{-\infty}^{\infty} (q_1-q_2) c e^{-2i\lambda x_1}(\abs{v_s(x')}^2 + v_s \overline{r_2} + \overline{v_s} r_1 + r_1 \overline{r_2}) \,dx_1 \,dV_{g_0}(x') = 0.
$$
Since $\norm{r_j}_{L^2(M)} = o(1)$ as $\tau \to \infty$ and since $dV_{g_0} = \abs{g_0}^{1/2} \,dr \,d\theta$ in the $(r,\theta)$ coordinates, it follows that 
$$
\int_{S^{n-2}} \int_0^{\infty} e^{-2\lambda r}(c(q_1-q_2))\ehat(2\lambda,r,\theta) \abs{b(\theta)}^2 \,dr \,d\theta = 0.
$$
Varying $b$ in $C^{\infty}(S^{n-2})$ so that the support of $b$ is very close to $\theta_0$, this implies that 
$$
\int_0^{\infty} e^{-2\lambda r} (c(q_1-q_2))\ehat(2\lambda,r,\theta_0) \,dr = 0.
$$
Since $\gamma$ was the curve $r \mapsto (r,\theta)$, this shows the result.
\end{proof}

\begin{proof}[Proof of Theorem \ref{theorem_main_local_result}]
Suppose that the local ray transform is injective on $O$ and $O \cap \partial M \subset E$. By Theorem \ref{theorem_main_local_integrals}, we know that 
\begin{equation} \label{cqdifference_integrals}
\int_0^L e^{-2\lambda t} (c(q_1-q_2))\ehat(2\lambda,\gamma(t)) \,dt = 0
\end{equation}
for any nontangential geodesic $\gamma$ in $O$. Setting $\lambda = 0$ and using local injectivity of the ray transform, we obtain that 
$$
(c(q_1-q_2))\ehat(0,\,\cdot\,) = 0 \text{ in } O.
$$
Going back to \eqref{cqdifference_integrals} and differentiating this identity with respect to $\lambda$, and then setting $\lambda = 0$ and using the vanishing of $(c(q_1-q_2))\ehat(0,\,\cdot\,)$ on $O$, it follows that 
$$
\int_0^L \frac{\partial}{\partial \lambda} \left[ (c(q_1-q_2))\ehat \right] (0,\gamma(t)) \,dt = 0 \text{ in } O
$$
for any nontangential geodesic in $O$. Local uniqueness for the ray transform again implies that 
$$
\frac{\partial}{\partial \lambda} \left[ (c(q_1-q_2))\ehat \right] (0,\,\cdot\,)= 0 \text{ in } O.
$$
Iterating this argument by taking higher order derivatives of \eqref{cqdifference_integrals} shows that 
$$
\left(\frac{\partial}{\partial \lambda}\right)^k \left[ (c(q_1-q_2))\ehat \right] (0,\,\cdot\,) = 0 \text{ in } O
$$
for any $k$. Since $c(q_1-q_2)$ is compactly supported in $x_1$, its Fourier transform is analytic and we have 
$$
(c(q_1-q_2))\ehat(\lambda,\,\cdot\,) = 0 \text{ in } O \text{ for all } \lambda \in \mR.
$$
Inverting the Fourier transform and using that $c$ is positive, we obtain that $q_1 = q_2$ in $M \cap (\mR \times O)$.
\end{proof}

\begin{proof}[Proof of Theorem \ref{theorem_main_global}]
Since $(M,g)$ is admissible, we may assume that 
$$
(M,g) \subset (\mR \times M_0, g), \quad g = c(e \oplus g_0)
$$
where $(M_0,g_0)$ is simple. 
The argument is very similar to the proof of Theorem \ref{theorem_main_local_integrals}, and we only indicate the required changes. Up to the formula \eqref{roneprime_equation}, the only change is that there is no restriction on $b \in C^{\infty}(S^{n-2})$ (we do not require $v_s$ to vanish on any part of the boundary). The function $r_1'$ is obtained as a solution of \eqref{roneprime_equation}, but this time we want that $\supp(\tilde{u}_1|_{\partial M}) \subset \partial M_-$. Fix $\delta > 0$. The boundary condition for $\tilde{u}_1$ is \eqref{roneprime_boundarycondition}, where $f_-$ is chosen to be 
$$
f_- = -e^{-i\lambda x_1} v_s(x') \quad \text{on } S_- \cup S_0.
$$

We use Proposition \ref{prop_carleman_solvability} to solve for $r_1'$. We have $\norm{f}_{L^2(M)} = O(1)$, and the bound $\norm{f_-}_{L^{\infty}} \lesssim 1$ implies 
$$
\norm{f_-}_{L^2(S_-)} \lesssim \sigma( \{ \partial_{\nu} x_1 \geq \delta \})
$$
and 
$$
\norm{f_-}_{L^2(S_0)} \lesssim \sigma( \{ -1/(3\tau) < \partial_{\nu} x_1 < 0 \}) + \sigma(\partial M_{tan}) + \sigma(\{ 0 < \partial_{\nu} x_1 < \delta \}).
$$
Now we use that 
$$
\sigma(\partial M_{tan}) = 0.
$$
This shows that we obtain the same estimate for $r_1'$ as before:
$$
\norm{r_1'}_{L^2(M)} \leq C_0 \left[ \tau^{-1} + (\delta \tau)^{-1/2} + o_{\tau \to \infty}(1) + o_{\delta \to 0}(1) \right].
$$
We can now continue as in the proof of Theorem \ref{theorem_main_local_integrals} to conclude that 
$$
\int_0^L e^{-2\lambda t} (c(q_1-q_2))\ehat(2\lambda,\gamma(t)) \,dt = 0
$$
for any $\lambda \in \mR$ and for any nontangential geodesic in $(M_0,g_0)$. The geodesic ray transform (with zero attenuation) is injective in $(M_0,g_0)$ \cite{S}. Following the proof of Theorem \ref{theorem_main_local_result}, but now using all the nontangential geodesics in $(M_0,g_0)$, shows that $q_1 = q_2$ in $M$.
\end{proof}

\section{Quasimodes concentrating near broken rays} \label{sec_gaussian}

In this section, to simplify notation, we write $(M,g)$ instead of $(M_0,g_0)$ and we assume that $(M,g)$ is a compact oriented Riemannian manifold having smooth boundary and $\dim(M) = m \geq 2$. Suppose that $E$ is a nonempty open subset of $\partial M$, and let $R = \partial M \setminus E$. We think of $E$ as the observation set where geodesics can enter and exit, and $R$ is the reflecting set. In the Calder\'on problem with partial data we are led to consider attenuated broken ray transforms, where one integrates a function on $M$ over broken geodesic rays that enter $M$ at some point of $E$, reflect nontangentially at points of $R$, and then exit $M$ at some point of $E$. The reflections will obey the law of geometric optics, so that a geodesic hitting the boundary in direction $v$ will be continued by the geodesic in the reflected direction $\hat{v} = v - 2\langle v, \nu \rangle \nu$.

Given a slightly complex frequency $s = \tau + i\lambda$, we will construct corresponding quasimodes, or approximate eigenfunctions, that concentrate near a fixed nontangential broken ray.

\begin{prop} \label{prop_gaussianbeam_quasimode_reflected}
Let $\gamma:[0,L] \to M$ be a nontangential broken ray with endpoints on $E$, and let $\lambda$ be a fixed real number. For any $K > 0$ there is a family $\{v_s \,;\,  s=\tau+i\lambda, \tau \geq 1\}$ in $C^{\infty}(M)$ such that as $\tau \to \infty$ 
$$
\norm{(-\Delta_{g} - s^2) v_s}_{L^2(M)} = O(\tau^{-K}), \quad \norm{v_s}_{L^2(M)} = O(1),
$$
the boundary values of $v_s$ satisfy 
$$
\norm{v_s}_{L^2(R)} = O(\tau^{-K}), \quad \norm{v_s}_{L^2(\partial M)} = O(1),
$$
and for any $\psi \in C(M)$ 
$$
\int_{M} \abs{v_{\tau+i\lambda}}^2 \psi \,dV_{g} \to \int_0^L e^{-2\lambda t} \psi(\gamma(t)) \,dt \quad \text{as $\tau \to \infty$}.
$$
\end{prop}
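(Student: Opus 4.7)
The plan is to build $v_s$ as a sum of Gaussian beam quasimodes along the legs of the broken ray $\gamma$, joined at reflection points by the geometric-optics matching condition so that the resulting function is an approximate eigenfunction of $-\Delta_g$ with eigenvalue $s^2$ and its boundary values cancel to high order on the reflecting set $R$.

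\textbf{Step 1: Gaussian beam on a single leg.} Write $\gamma = \gamma_0 \cup \cdots \cup \gamma_N$ with $\gamma_j = \gamma|_{[t_j, t_{j+1}]}$ a unit-speed geodesic, reflection points $p_j = \gamma(t_j) \in R$ for $1 \leq j \leq N$, and endpoints $\gamma(0), \gamma(L) \in E$. On a tube around each $\gamma_j$ I introduce Fermi coordinates $(t, y)$ with $t$ arc-length and $y \in \mR^{m-1}$ transverse, and build an ansatz $v_j = \tau^{(m-1)/4} e^{is \psi_j} a_j$ where $\psi_j$ solves the complex eikonal equation $\langle d\psi_j, d\psi_j \rangle_g = 1$ to order $K$ along $\gamma_j$, has $\re \psi_j(\gamma_j(t)) = t$ and $\im \psi_j \geq c\abs{y}^2$ transverse to $\gamma_j$, and the amplitude $a_j \in C^\infty$ is supported in a small tube around $\gamma_j$ and solves the usual transport equations to order $K$. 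Standard Gaussian beam theory then gives $\norm{(-\Delta_g - s^2) v_j}_{L^2(M)} = O(\tau^{-K})$.

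\textbf{Step 2: Matching at reflection points.} At each $p_j$, the key is to prescribe the phase and amplitude of $v_j$ in terms of those of $v_{j-1}$ so that (i) the bicharacteristic starting at $(p_j, d\psi_j(p_j))$ is precisely the reflected leg $\gamma_j$, and (ii) $v_{j-1} + v_j$ vanishes on $\partial M$ to order $K$ near $p_j$. In boundary normal coordinates around $p_j$ I set $\psi_j = \psi_{j-1}$ to infinite order along $\partial M \cap \{\text{central ray}\}$ and $\d_\nu \psi_j(p_j) = -\d_\nu \psi_{j-1}(p_j)$, which is consistent with $\abs{d\psi_j}_g^2 = 1$ and realizes the law of reflection (angle of incidence equals angle of reflection) because the tangential component of $d\psi_{j-1}$ is preserved. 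The Hessian of $\psi_j$ along $\partial M$ is inherited from $\psi_{j-1}$, and the transverse Hessian is then determined by the eikonal equation. Simultaneously I impose $a_j|_{\partial M} = -a_{j-1}|_{\partial M}$ to order $K$ near $p_j$; because $a_{j-1}$ and $a_j$ solve transport equations along opposite-direction bicharacteristics that coincide on the tangent space to $\partial M$ at $p_j$, these Cauchy data are consistent and determine $a_j$ uniquely in a tube around $\gamma_j$ up to order $K$.

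\textbf{Step 3: Assembly and boundary control.} The nontangentiality of $\gamma$ guarantees that the reflection points are distinct and that $\partial M$ is transverse to the Hamiltonian flow at each $p_j$, so the tubes around the legs $\gamma_j$ can be chosen so small that the beams $v_{j-1}, v_j$ interact only in a small neighborhood of each $p_j$ and no other beams overlap there. Set $v_s = \sum_{j=0}^N v_j$. By construction, near each $p_j \in R$ the sum $v_{j-1} + v_j$ vanishes on $\partial M$ to order $K$, which via the Gaussian transverse decay yields $\norm{v_s}_{L^2(R)} = O(\tau^{-K})$. At the endpoints $\gamma(0), \gamma(L) \in E$ there is no cancellation, but those beams contribute only to $\norm{v_s}_{L^2(E)} = O(1)$, giving $\norm{v_s}_{L^2(\partial M)} = O(1)$. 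The bulk estimate $\norm{(-\Delta_g - s^2) v_s}_{L^2(M)} = O(\tau^{-K})$ follows from Step~1 and the fact that the overlap regions near the $p_j$ contribute only $O(\tau^{-K})$ in $L^2$.

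\textbf{Step 4: Concentration.} For the weighted limit, note that $\abs{e^{is\psi_j}}^2 = e^{-2\lambda \re \psi_j} e^{-2\tau \im \psi_j}$. In Fermi coordinates on leg $j$, $\re \psi_j(t,0) = t$ and $\im \psi_j(t,y) \approx \tfrac{1}{2}\langle H_j(t) y, y \rangle$ for a positive-definite $(m-1)\times(m-1)$ matrix $H_j(t)$. Stationary phase / Laplace method in $y$ then gives, as $\tau \to \infty$,
\[
\int_M \abs{v_j}^2 \psi\, dV_g \longrightarrow \int_{t_j}^{t_{j+1}} e^{-2\lambda t}\, \psi(\gamma(t))\, dt,
\]
provided $\abs{a_j(t,0)}^2$ is normalized to cancel the factor $\det H_j(t)^{-1/2}$, which amounts to the standard choice $\abs{a_j(t,0)}^2 \abs{\det H_j(t)}^{-1/2} \equiv (2\pi)^{(m-1)/2}$, consistent with the transport equation. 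The cross terms $\int v_j \overline{v_k}\,\psi\,dV_g$ with $j \neq k$ are $O(\tau^{-\infty})$ since the tubes are disjoint off small neighborhoods of the $p_j$'s and the reflection points have measure zero.

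\textbf{Main obstacle.} The delicate point is Step~2: showing that the reflection prescription $(\psi_{j-1}, a_{j-1}) \mapsto (\psi_j, a_j)$ yields phases and amplitudes that simultaneously (i) satisfy the eikonal and transport equations to order $K$ along the outgoing leg, (ii) realize the correct geometric reflection of the Hamiltonian flow, and (iii) produce the boundary cancellation $v_{j-1} + v_j = O(\tau^{-K})$ on $\partial M$. This is a Cauchy-problem compatibility argument that uses non-tangentiality of $\dot{\gamma}$ at $p_j$ to guarantee the transversality of $\partial M$ to the bicharacteristic, and once it is carried out at a single reflection, iterating over the $N$ distinct reflections is routine.
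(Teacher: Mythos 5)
Your overall strategy is the same as the paper's: build a Gaussian beam on each leg of the broken ray, match phase and amplitude at each reflection point by equating the jets of their restrictions to $\partial M$ (the law of reflection making the tangential parts of the phase differentials agree, so only the normal derivative flips), sum the beams so the traces cancel to high order on $R$, and get the concentration limit by the Laplace method in the transverse variables. There are, however, two genuine gaps as written. First, in Step 3 you assume the tubes around the legs can be chosen so thin that beams interact only near the reflection points $p_j$. In the generality of this proposition the transversal manifold is arbitrary, so a single geodesic leg may self-intersect and distinct legs may cross each other in the interior of $M$; the supports of different $v_j$ then overlap on open sets far from the $p_j$ no matter how small the tubes are. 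This breaks both the construction (ordinary Fermi coordinates do not exist globally along a self-intersecting leg; one needs the modified Fermi charts and the gluing of locally defined beams, which is where a substantial part of the paper's argument lives) and your estimate that only consecutive beams contribute.

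Second, in Step 4 the cross terms are not negligible ``since the reflection points have measure zero'': near each $p_j$, and near each self-intersection or crossing point, two beams overlap on an open neighborhood where each carries $O(1)$ of $L^2$ mass, so $\int v_j \overline{v_k}\,\psi\,dV_g$ needs an actual argument. The correct one is non-stationary phase, exploiting that the real parts of the phase gradients differ there (transversality of reflections and of self-intersections); integrating by parts produces boundary terms that must be controlled by the $L^2(\partial M)$ bounds, plus terms where the derivative hits the Gaussian factor, and this yields only $o(1)$ (of size roughly $\tau^{-1/2}$), not $O(\tau^{-\infty})$ --- still sufficient, but it is a needed step, not a support observation. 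Two smaller points: matching the boundary jets to order $K$ gives, after the $y \mapsto \tau^{-1/2}y$ scaling, an $L^2(R)$ bound of order roughly $\tau^{-(K-1)/2}$ rather than $\tau^{-K}$ directly (harmless, since $K$ is arbitrary, but the bookkeeping should be done); and on the legs $j \geq 1$ you are not free to impose the normalization $\abs{a_j(t,0)}^2 \det(\im H_j(t))^{-1/2} \equiv \mathrm{const}$, because the initial data for the Riccati and transport equations at $p_j$ are already dictated by the matching with leg $j-1$ --- one must check that the inherited data give the same per-leg constant, so that every leg enters the limit with the same weight $e^{-2\lambda t}\,dt$.
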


Let us begin by proving this result in the special case $E = \partial M$, so that $R = \emptyset$ and one does not need to worry about reflected rays. The next three preparatory lemmas describe a modified Fermi coordinate system that is very useful in this construction.

\begin{lemma} \label{lemma_selfintersection_elementary}
Let $(\hat{M},g)$ be a compact manifold without boundary, and let $\gamma: (a,b) \to \hat{M}$ be a unit speed geodesic segment that has no loops. There are only finitely many times $t \in (a,b)$ such that $\gamma$ intersects itself at $\gamma(t)$.
\end{lemma}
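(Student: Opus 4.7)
The plan is to argue by contradiction. Suppose the set of self-intersection times
$$
T=\{t\in(a,b) \,;\, \gamma^{-1}(\gamma(t))\neq\{t\}\}
$$
is infinite, and pick pairwise distinct $t_n\in T$ together with $s_n\in(a,b)\smallsetminus\{t_n\}$ satisfying $\gamma(s_n)=\gamma(t_n)$. Since $(\hat M,g)$ is compact without boundary it is geodesically complete, so $\gamma$ extends to a unit speed geodesic on an open interval $(a',b')\supset[a,b]$, making $\gamma$ and $\dot\gamma$ continuous on $[a,b]$. Assuming $(a,b)$ is bounded (as is natural for a geodesic segment), the compactness of $[a,b]$ and of the unit sphere bundle lets me pass to subsequences so that $s_n\to s_*$, $t_n\to t_*$ in $[a,b]$, and $\dot\gamma(s_n)\to\dot\gamma(s_*)$, $\dot\gamma(t_n)\to\dot\gamma(t_*)$. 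By continuity $\gamma(s_*)=\gamma(t_*)$. I would then split the argument according to whether $s_*=t_*$ or not.

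If $s_*=t_*$, then since $\gamma$ is a unit speed smooth curve (hence an immersion), it is locally an embedding: there is $\delta>0$ with $\gamma|_{(t_*-\delta,t_*+\delta)}$ injective. For $n$ large both $s_n$ and $t_n$ lie in $(t_*-\delta,t_*+\delta)$, so $\gamma(s_n)=\gamma(t_n)$ forces $s_n=t_n$, contradicting the choice of $s_n$.

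If $s_*\neq t_*$, then $(s_*,t_*)$ is itself a self-intersection of the extended geodesic. I would take ``no loops'' to mean that at each self-intersection the two tangent vectors are linearly independent, excluding both the parallel case $\dot\gamma(s_*)=\dot\gamma(t_*)$ and the anti-parallel ``palindromic'' case $\dot\gamma(s_*)=-\dot\gamma(t_*)$; the latter is excluded because by uniqueness of geodesics it would force $\gamma(s_*+u)=\gamma(t_*-u)$ for all admissible $u$ and thereby produce a whole open interval of self-intersection times, which qualifies as a loop in the broad sense. With that interpretation, $\dot\gamma(s_*)$ and $\dot\gamma(t_*)$ are linearly independent in $T_{\gamma(s_*)}\hat M$. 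Near $\gamma(s_*)=\gamma(t_*)$ the images of the two short geodesic arcs through $s_*$ and through $t_*$ are thus $1$-dimensional embedded submanifolds meeting transversally, and since $\dim\hat M\geq 2$ such a transversal intersection reduces to the single point $\gamma(s_*)$ on a neighborhood. Hence there is an open $U\subset(a',b')^2$ containing $(s_*,t_*)$ in which $(s_*,t_*)$ is the unique solution of $\gamma(s)=\gamma(t)$, contradicting the fact that $(s_n,t_n)\in U$ for all large $n$, combined with the distinctness of the $t_n$.

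The delicate step I foresee is the $s_*\neq t_*$ case: the correct interpretation of the ``no loops'' hypothesis must yield genuine linear independence of the two limiting tangent vectors, which means ruling out both the parallel and the anti-parallel configurations. Once transversality is in hand, the remainder is a standard combination of compactness and local transversal-intersection theory.
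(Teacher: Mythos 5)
Your route is genuinely different from the paper's. The paper argues directly, with no limiting argument: it first shows that every self-intersection with both times in $(a,b)$ is transversal --- the parallel case $(\gamma(t),\dot{\gamma}(t))=(\gamma(t'),\dot{\gamma}(t'))$ is exactly what ``no loops'' forbids, and the anti-parallel case is impossible outright because uniqueness of geodesics would force $\dot{\gamma}$ at the midpoint $\frac{t+t'}{2}$ to equal its own negative --- and then partitions $(a,b)$ into finitely many subintervals of length below the injectivity radius, noting that two such short geodesic arcs meet transversally in at most one point, so the set of self-intersection pairs injects into the finite set of pairs of subintervals. Your compactness/contradiction argument replaces this counting by an accumulation analysis; the local ingredients you need (an immersion is locally injective, and two $C^1$ arcs with linearly independent tangents at a common point meet only at that point nearby --- correct, though ``transversal'' is a loose word for it when $\dim \hat{M}\geq 3$) are fine.

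Two points need tightening. First, the anti-parallel case: you should not lean on a ``broad sense'' of loop. If $\gamma(s_*)=\gamma(t_*)$ and $\dot{\gamma}(s_*)=-\dot{\gamma}(t_*)$ with $s_*<t_*$, uniqueness of geodesics gives $\gamma(s_*+u)=\gamma(t_*-u)$, and differentiating this at the midpoint $m=\frac{s_*+t_*}{2}$ yields $\dot{\gamma}(m)=-\dot{\gamma}(m)$, impossible for a unit-speed curve; this excludes the case unconditionally, which is also how the paper reads the hypothesis (``no loops'' only rules out the parallel configuration). Second --- the actual gap --- your limit pair $(s_*,t_*)$ lives in $[a,b]^2$, while the no-loops hypothesis concerns times in $(a,b)$. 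In the corner case $s_*=a$, $t_*=b$ with $\dot{\gamma}(a)=\dot{\gamma}(b)$ (the geodesic closes up smoothly exactly at the ends of the parameter interval) the hypothesis is not violated, since two parameters in $(a,b)$ never differ by the period $b-a$; so linear independence of the limiting tangents is not available and your local argument stalls. This is fixable: periodicity gives $\gamma(t_n)=\gamma(s_n+(b-a))$ with both parameters tending to $b$, and local injectivity of the extended geodesic near parameter $b$ forces $t_n-s_n=b-a$, contradicting $s_n>a$, $t_n<b$. If only one of $s_*,t_*$ is an endpoint, sliding both times slightly along the geodesic produces a parallel self-intersection with both times inside $(a,b)$, so the hypothesis does apply there. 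With these two repairs your proof is complete; note that the paper's counting argument sidesteps the endpoint issue entirely, which is what its formulation buys.
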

\begin{proof}
Since $\gamma$ has no loops, $(\gamma(t),\dot{\gamma}(t)) = (\gamma(t'), \dot{\gamma}(t'))$ implies $t = t'$. The first observation is that $\gamma$ can only self-intersect transversally, since also $(\gamma(t),\dot{\gamma}(t)) = (\gamma(t'), -\dot{\gamma}(t'))$ implies $t = t'$ (if this would happen for $t < t'$, then by uniqueness of geodesics $\dot{\gamma}(\frac{t+t'}{2}) = -\dot{\gamma}(\frac{t+t'}{2})$ which is impossible). Next note that if $r$ is smaller than the injectivity radius of $(\hat{M},g)$, then any two geodesic segments of length $\leq r$ can intersect transversally in at most one point (locally geodesics are close to straight lines). Partitioning $(a,b)$ in disjoint intervals $\{ J_k \}_{k=1}^K$ of length $\leq r$, we have an injective map 
\begin{multline*}
\{ (t,t') \in (a,b)^2 \,;\, t < t' \text{ and } \gamma(t) = \gamma(t') \} \\
 \mapsto \{ (k,l) \in \{1,\ldots,K\}^2 \,;\, t \in J_k, t' \in J_l \}.
\end{multline*}
Consequently, $\gamma$ can only self-intersect finitely many times.
\end{proof}

\begin{lemma} \label{lemma_diffeomorphism_elementary}
Let $F$ be a $C^1$ map from a neighborhood of $(a,b) \times \{0\}$ in $\mR^n$ into a smooth manifold such that $F|_{(a,b) \times \{0\}}$ is injective and $DF(t,0)$ is invertible for $t \in (a,b)$. If $[a_0,b_0]$ is a closed subinterval of $(a,b)$, then $F$ is a $C^1$ diffeomorphism in some neighborhood of $[a_0,b_0] \times \{0\}$ in $\mR^n$.
\end{lemma}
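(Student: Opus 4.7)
The plan is to combine the inverse function theorem with a compactness argument to upgrade from pointwise local invertibility to invertibility on a common tubular neighborhood of $[a_0,b_0] \times \{0\}$.

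First I would invoke the inverse function theorem at each point $(t,0)$ with $t \in [a_0,b_0]$: since $DF(t,0)$ is invertible and $F$ is $C^1$, there exist $\rho(t), \sigma(t) > 0$ such that $F$ restricts to a $C^1$ diffeomorphism from the open ball $B((t,0),\rho(t))$ onto its image, and in particular is injective there. Shrinking if necessary, one also keeps $B((t,0), \rho(t))$ inside the domain of $F$ and inside $(a,b) \times \mR^{n-1}$. Covering the compact set $[a_0,b_0] \times \{0\}$ by finitely many such balls and taking a Lebesgue number, I obtain $\rho_0 > 0$ such that $F$ is injective on every ball $B((t,0),\rho_0)$ with $t \in [a_0,b_0]$.

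Next I would show that there is a genuine tubular neighborhood $U_\delta = \{(t,x') \,;\, t \in [a_0,b_0],\ |x'| < \delta\}$ (inflated slightly to remain open) on which $F$ is globally injective. Suppose not: then for every $k$ there exist distinct points $p_k = (s_k, u_k)$ and $q_k = (t_k, v_k)$ with $s_k, t_k \in [a_0,b_0]$, $|u_k|, |v_k| < 1/k$, and $F(p_k) = F(q_k)$. By compactness, pass to a subsequence so that $s_k \to s_\infty$ and $t_k \to t_\infty$ in $[a_0,b_0]$. Continuity of $F$ gives $F(s_\infty, 0) = F(t_\infty, 0)$, and the hypothesis that $F|_{(a,b) \times \{0\}}$ is injective forces $s_\infty = t_\infty =: \tau$. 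But then for $k$ large both $p_k$ and $q_k$ lie in $B((\tau,0),\rho_0)$, on which $F$ is injective, so $p_k = q_k$, a contradiction. The main obstacle here is precisely this step: without the compactness of $[a_0,b_0]$ and the global injectivity along the axis, local inverse function theorem information would be insufficient.

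Finally, on such a tubular neighborhood $V$ containing $[a_0,b_0] \times \{0\}$ (slightly enlarged to be open), the map $F: V \to F(V)$ is a continuous bijection and, by the inverse function theorem at every point, a local $C^1$ diffeomorphism. A local $C^1$ diffeomorphism that is globally injective on an open set has a globally defined $C^1$ inverse, obtained by patching the local inverses (uniqueness of inverses on overlaps makes the patching automatic). Hence $F$ is a $C^1$ diffeomorphism of $V$ onto its image, which proves the lemma.
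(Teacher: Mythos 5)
Your proof is correct. It differs from the paper's argument in how it globalizes the local injectivity furnished by the inverse function theorem. The paper proceeds constructively: it covers $[a_0,b_0]$ by finitely many intervals $I_j$ on whose thickenings $F$ is a diffeomorphism, arranges the combinatorial condition $\overline{I}_j \cap \overline{I}_k = \emptyset$ unless $\abs{j-k}\leq 1$, fixes a Riemannian metric on the target to separate the images $F(U_j)$, $F(U_k)$ of non-adjacent pieces by a positive distance, and then checks injectivity on $U=\cup U_j$ by a case analysis (non-adjacent pieces have disjoint images, adjacent pieces sit inside a single larger chart where $F$ is bijective). You instead extract a uniform radius $\rho_0$ of injectivity along the axis (a Lebesgue-number type argument; one should phrase it as: every ball $B((t,0),\rho_0)$ with $t\in[a_0,b_0]$ lies inside one of finitely many balls on which the inverse function theorem applies) and then rule out failure of injectivity on shrinking tubes by a sequential compactness contradiction, using injectivity of $F|_{(a,b)\times\{0\}}$ to force the two limit points to coincide and then uniform local injectivity to force the sequences to coincide; finally, injective local diffeomorphism implies diffeomorphism onto the image. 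Your route is shorter and avoids introducing a metric on the target and the adjacency bookkeeping, at the cost of being non-constructive about the size of the neighborhood, whereas the paper's chain-of-charts construction produces an explicit neighborhood of product type $I_j\times B$ adapted to a subdivision of the interval, which is the form reused in the subsequent Fermi coordinate construction. Both arguments rest on the same three ingredients (inverse function theorem, compactness of $[a_0,b_0]$, injectivity along the axis), so either suffices for the lemma as stated.
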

\begin{proof}
For any $t \in [a_0,b_0]$, the inverse function theorem implies that there is $\eps_t > 0$ such that $F|_{(t-3\eps_t,t+3\eps_t) \times B_{3\eps_t}(0)}$ is a $C^1$ diffeomorphism. Since $[a_0,b_0]$ is covered by the intervals $(t-\eps_t,t+\eps_t)$, by compactness we have $[a_0,b_0] \subset \cup_{j=1}^N (t_j-\eps_j,t_j+\eps_j)$ where $F|_{(t_j-3\eps_j,t_j+3\eps_j) \times B_{3\eps_j}(0)}$ is bijective. We can further assume (upon throwing away or shrinking some intervals if necessary) that the intervals $I_j = (t_j-\eps_j,t_j+\eps_j)$ satisfy $\overline{I}_j \cap \overline{I}_k = \emptyset$ unless $\abs{j-k} \leq 1$. Since $\gamma(t) = F(t,0)$ is injective, we also have $\gamma(\overline{I}_j) \cap \gamma(\overline{I}_k) = \emptyset$ unless $\abs{j-k} \leq 1$.

Fix a Riemannian metric in the target manifold, and define 
$$
\delta = \inf \ \{ \text{dist}(\gamma(\overline{I}_j), \gamma(\overline{I}_k)) \,;\, \abs{j-k} \geq 2 \} > 0.
$$
Let $U_j = I_j \times B_{\eps}(0)$, where $\eps < \min \{ \eps_1, \ldots, \eps_N \}$ is chosen so small that $F(U_j) \subset \{ q \,;\, \text{dist}(q,\gamma(\overline{I}_j)) < \delta/2 \}$. Then $F(U_j) \cap F(U_k) = \emptyset$ unless $\abs{j-k} \leq 1$. Define 
$$
U = \cup_{j=1}^N U_j.
$$
To show that $F|_U$ is a $C^1$ diffeomorphism, it is enough to check injectivity. If $F(t,y) = F(t',y')$ for $(t,y), (t',y') \in U$, then necessarily $(t,y) \in U_j$, $(t',y') \in U_k$ where $\abs{j-k} \leq 1$. We may assume that $\eps_j \geq \eps_k$. Since $F|_{(t_j-3\eps_j,t_j+3\eps_j) \times B_{3\eps_j}(0)}$ is bijective, we obtain $(t,y) = (t',y')$.
\end{proof}

\begin{lemma} \label{lemma_quasimode_coordinates}
Let $(\hat{M},g)$ be a compact manifold without boundary, and assume that $\gamma: (a,b) \to \hat{M}$ is a unit speed geodesic segment with no loops. Given a closed subinterval $[a_0,b_0]$ of $(a,b)$ such that $\gamma|_{[a_0,b_0]}$ self-intersects only at times $t_j$ with $a_0 < t_1 < \ldots < t_N < b_0$ (set $t_0 = a_0$ and $t_{N+1} = b_0$), there is an open cover $\{(U_j,\varphi_j)\}_{j=0}^{N+1}$ of $\gamma([a_0,b_0])$ consisting of coordinate neighborhoods having the following properties:
\begin{enumerate}
\item 
$\varphi_j(U_j) = I_j \times B$ where $I_j$ are open intervals and $B = B(0,\delta)$ is an open ball in $\mR^{n-1}$ where $\delta$ can be taken arbitrarily small,
\item 
$\varphi_j(\gamma(t)) = (t,0)$ for $t \in I_j$,
\item 
$t_j$ only belongs to $I_j$ and $\overline{I}_j \cap \overline{I}_k = \emptyset$ unless $\abs{j-k} \leq 1$,
\item 
$\varphi_j = \varphi_k$ on $\varphi_j^{-1}((I_j \cap I_k) \times B)$.
\end{enumerate}
Further, if $S$ is a hypersurface through $\gamma(a_0)$ that is transversal to $\dot{\gamma}(a_0)$, one can arrange that the map $y \mapsto \varphi_0^{-1}(a_0,y)$ parametrizes $S$ near $\gamma(a_0)$.
\end{lemma}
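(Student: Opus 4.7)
The plan is to construct a single smooth map $F$ from a neighborhood of $[a_0,b_0]\times\{0\}$ in $\mR^n$ into $\hat M$ with $F(t,0)=\gamma(t)$ and $DF(t,0)$ invertible for every $t$, and then take each $\varphi_j$ to be a local inverse of $F$ on $I_j\times B$. With this setup, compatibility condition (4) is automatic: for $p\in\varphi_j^{-1}((I_j\cap I_k)\times B)$ one has $\varphi_j(p)\in (I_j\cap I_k)\times B\subset I_k\times B$, and injectivity of $F|_{I_k\times B}$ forces $\varphi_j(p)=\varphi_k(p)$.

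First I would choose points $c_{-1}<a_0<c_0<t_1<c_1<t_2<\ldots<c_{N-1}<t_N<c_N<b_0<c_{N+1}$ and a small $\mu>0$, and set $I_j=(c_{j-1}-\mu,c_j+\mu)$. For sufficiently small $\mu$, each $I_j$ contains only the single self-intersection time $t_j$, only consecutive intervals meet, and $\overline I_j$ lies in an open interval on which $\gamma$ is injective: the only self-intersection time in $\overline I_j$ is $t_j$, whose self-intersection partner is some other $t_{l}\ne t_j$ that is thereby excluded from $\overline I_j$.

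To construct $F$, pick a smooth frame $e_1(t),\ldots,e_{n-1}(t)$ of vectors in $T_{\gamma(t)}\hat M$ transverse to $\dot\gamma(t)$; for the optional hypersurface statement, fix a smooth local parametrization $\sigma$ of $S$ with $\sigma(0)=\gamma(a_0)$, take $e_i(a_0)=d\sigma(e_i^0)\in T_{\gamma(a_0)}S$, and extend $e_i(t)$ to other $t$ by parallel transport along $\gamma$ (which preserves transversality to $\dot\gamma$ since $\dot\gamma$ is itself parallel). Set $\psi(y)=\exp_{\gamma(a_0)}^{-1}(\sigma(y))\in T_{\gamma(a_0)}\hat M$, denote by $P_t:T_{\gamma(a_0)}\hat M\to T_{\gamma(t)}\hat M$ parallel transport along $\gamma$, and pick a smooth cutoff $\chi$ with $\chi=1$ near $a_0$ and $\chi=0$ for $t\ge a_0+\eta$. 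Define
$$
F(t,y)=\exp_{\gamma(t)}\!\Big((1-\chi(t))\textstyle\sum_i y_i e_i(t)+\chi(t)\,P_t(\psi(y))\Big).
$$
Then $F(t,0)=\gamma(t)$, $F(a_0,y)=\exp_{\gamma(a_0)}(\psi(y))=\sigma(y)\in S$, and since $P_t(e_j(a_0))=e_j(t)$ and $\partial_{y_j}\psi(0)=d\sigma(e_j^0)=e_j(a_0)$, both summands inside $\Big(\cdot\Big)$ have $\partial_{y_j}$-derivative equal to $e_j(t)$ at $y=0$; hence $\partial_{y_j}F(t,0)=e_j(t)$ and $\partial_tF(t,0)=\dot\gamma(t)$, making $DF(t,0)$ invertible for every $t$. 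When no hypersurface condition is imposed, simply take $\chi\equiv0$ to recover standard Fermi coordinates.

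Finally, applying Lemma~\ref{lemma_diffeomorphism_elementary} to $F$ on each $\overline I_j$ yields a $\delta_j>0$ such that $F$ is a diffeomorphism on $I_j\times B(0,\delta_j)$; setting $\delta=\min_j\delta_j$, $B=B(0,\delta)$, $U_j=F(I_j\times B)$, and $\varphi_j=(F|_{I_j\times B})^{-1}$ gives the required cover, and $\delta$ may be taken arbitrarily small. The main obstacle is that the naive $S$-flow construction $F(t,y)=\exp_{\sigma(y)}((t-a_0)V(y))$ fails to be a local diffeomorphism at points conjugate to $S$ along $\gamma$, and condition (4) forbids switching to a different parametrization past such a point since this would introduce nontrivial transition functions on overlaps. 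The resolution is to use a Fermi-type construction with a parallel-transported frame (non-degenerate at $y=0$ for all $t$) and to encode the $S$-matching purely in the nonlinear $y$-dependence through $\psi$; since parallel transport sends $e_j(a_0)$ to $e_j(t)$, the linearization of $P_t\circ\psi$ at $y=0$ coincides with that of the linear map $y\mapsto\sum y_i e_i(t)$, so the interpolation never affects $DF(t,0)$.
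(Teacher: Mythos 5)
Your construction is correct, and its skeleton is the same as the paper's: a single Fermi-type map $F(t,y)=\exp_{\gamma(t)}(\cdot)$ built from a frame parallel-transported along $\gamma$, charts obtained as local inverses of this one global map (which makes the transition condition (4) automatic), and Lemma~\ref{lemma_diffeomorphism_elementary} applied on each interval after the self-intersection times have been separated via Lemma~\ref{lemma_selfintersection_elementary}. Where you genuinely diverge is the ``Further'' clause about the hypersurface $S$. The paper keeps the plain Fermi map $F$ (with initial frame chosen tangent to $S$) and then post-composes the chart near $\gamma(a_0)$ with a diffeomorphism $\Phi$ of the coordinate domain, built by an explicit Taylor-coefficient interpolation between $F_0^{-1}\circ q$ at $t=a_0$ and the identity for $t>c$, and invokes Lemma~\ref{lemma_diffeomorphism_elementary} once more; it must then keep $\Phi=\mathrm{id}$ on the overlap with $I_1$ to preserve (4). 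You instead encode the $S$-matching directly in the target, blending $P_t(\psi(y))$ with the linear Fermi map inside the exponential via a cutoff in $t$; since both maps have the same $y$-linearization $e_j(t)$ at $y=0$, the invertibility of $DF(t,0)$ is untouched, $F(a_0,\cdot)=\sigma$ exactly, and because all charts are still inverses of the single modified $F$, condition (4) needs no extra care at all. This buys a shorter argument (no explicit Taylor bookkeeping, no second diffeomorphism lemma application for $\Phi$) at essentially no cost. One small point you share with the paper and could tighten: since $I_0$ and $I_{N+1}$ necessarily stick out slightly beyond $[a_0,b_0]$, injectivity of $\gamma$ on neighborhoods of $\overline{I}_0$ and $\overline{I}_{N+1}$ is not literally guaranteed by the hypothesis on $\gamma|_{[a_0,b_0]}$ alone; it follows by Lemma~\ref{lemma_selfintersection_elementary} applied on a slightly larger compact interval, after choosing $c_{-1},c_{N+1}$ close enough to $a_0,b_0$ and $\mu$ small so that the overshoot avoids the finitely many self-intersection times of the ambient geodesic and their partners.
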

\begin{proof}
We will use modified Fermi coordinates, constructed as follows. Let $\{ v_1, \ldots, v_{n-1} \}$ be an orthonormal set of vectors in $T_{\gamma(a_0)} \hat{M}$ such that $\{ \dot{\gamma}(a_0), v_1, \ldots, v_{n-1} \}$ is a basis. (The case where $\{ \dot{\gamma}(a_0), v_1, \ldots, v_{n-1} \}$ is an orthonormal basis corresponds to the usual Fermi coordinates.) Let $E_{\alpha}(t)$ be the parallel transport of $v_{\alpha}$ along the geodesic $\gamma$. Since $\dot{\gamma}(t)$ is also parallel along $\gamma$, the set $\{ \dot{\gamma}(t), E_1(t), \ldots, E_{n-1}(t) \}$ is a basis of $T_{\gamma(t)} \hat{M}$ for $t \in (a,b)$.

Define the function 
$$
F: (a,b) \times \mR^{n-1} \to \hat{M}, \ \ F(t,y) = \exp_{\gamma(t)}(y^{\alpha} E_{\alpha}(t)).
$$
Here $\exp$ is the exponential map in $(\hat{M},g)$ and $\alpha$, $\beta$ run from $1$ to $n-1$. Then $F(t,0) = \gamma(t)$ and (with $e_{\alpha}$ the $\alpha$th coordinate vector) 
$$
\frac{\partial}{\partial s} F(t,s e_{\alpha})\big|_{s=0} = E_{\alpha}(t), \quad \frac{\partial}{\partial t} F(t,0) = \dot{\gamma}(t).
$$
Thus $F$ is a $C^{\infty}$ map near $(a,b) \times \{0\}$ such that $DF(t,0)$ is invertible for $t \in (a,b)$.

In the case where $\gamma$ does not self-intersect, $F|_{(a,b) \times \{0\}}$ is injective and Lemma \ref{lemma_diffeomorphism_elementary} implies the existence of a single coordinate neighborhood of $\gamma([a_0,b_0])$ so that (1) and (2) are satisfied (then (3) and (4) are void). In the general case, by Lemma \ref{lemma_selfintersection_elementary} the geodesic segment $\gamma|_{[a_0,b_0]}$ only self-intersects at finitely many times $t_j$ with $a_0 < t_1 < \ldots < t_N < b_0$. For some sufficiently small $\delta$, $\gamma$ is injective on the intervals $(a,t_1-\delta)$, $(t_1-2\delta,t_2-\delta)$, \ldots, $(t_N-2\delta,b)$ and each interval intersects at most two of the others. Restricting the map $F$ above to suitable neighborhoods corresponding to these intervals (or slightly smaller ones) and using Lemma \ref{lemma_diffeomorphism_elementary}, we obtain the required coordinate charts with $\varphi_j = F^{-1}|_{U_j}.$

Let $S$ be a hypersurface transversal to $\dot{\gamma}(a_0)$, and choose some para\-metrization $y \mapsto q(y)$ of $S$ near $\gamma(a_0)$ satisfying $\frac{\partial}{\partial s} q(s e_{\alpha}) = v_{\alpha}$. We will form a new chart $(\tilde{U}_0,\tilde{\varphi}_0)$ by modifying $(U_0,\varphi_0)$ so that $y \mapsto \tilde{\varphi}_0^{-1}(a_0,y)$ parametrizes $S$ near $\gamma(a_0)$.

We may assume that $a_0 = 0$, and write $F_0 = \varphi_0^{-1}$, $\tilde{F}_0 = \tilde{\varphi}_0^{-1}$. It is enough to choose $\tilde{F}_0 = F_0 \circ \Phi$, where $\Phi$ is a diffeomorphism near $\overline{I}_0 \times B$ such that 
\begin{gather*}
\Phi(t,0) = (t,0), \\
\Phi(0,y) = F_0^{-1}(q(y)), \\
\Phi(t,y) = (t,y) \ \ \text{for $t > c$ with suitable $c > 0$}.
\end{gather*}
Write the components of $\tilde{q} = F_0^{-1} \circ q$ as Taylor series 
$$
\tilde{q}^j(y) = \tilde{q}^j(0) + \nabla \tilde{q}^j(0) \cdot y + H^j(y)y \cdot y
$$
where $H^j$ are smooth matrices, and $j = 0, \ldots, n-1$ ($t$ is the $0$th variable). The properties of $q$ imply that 
$$
\tilde{q}^j(0) = 0, \ \ \partial_{\beta} \tilde{q}^0(0) = 0, \ \ \partial_{\beta} \tilde{q}^{\alpha}(0) = \delta_{\beta}^{\alpha}.
$$
We look for $\Phi$ in the form 
$$
\Phi^j(t,y) = f^j(t) + a^j(t) \cdot y + R^j(t,y) y \cdot y
$$
for some smooth functions $f^j$, vectors $a^j$ and matrices $R^j$. The conditions for $\Phi$ motivate the following choices:
$$
f^0(t) = t, \ \ f^{\alpha}(t) = 0, \ \ a^0_{\beta}(t) = 0, \ \ a^{\alpha}_{\beta}(t) = \delta^{\alpha}_{\beta}.
$$
We choose $R^j(t,y)$ to be a smooth matrix with $R^j(0,y) = H^j(y)$ and $R^j(t,y) = 0$ for $t > c$. Then $D\Phi(t,0) = \mathrm{Id}$, and Lemma \ref{lemma_diffeomorphism_elementary} ensures that $\Phi$ is a diffeomorphism near $\overline{I}_0 \times B$ after possible decreasing $B$.
\end{proof}

The next result gives the construction of (non-reflected) Gaussian beam quasimodes associated with a finite length geodesic segment that enters and exits the domain nontangentially. To prepare for the reflected case, we also consider the possibility of prescribing the boundary values of the quasimode at least up to high order at a point. Recall that if $f$ is a smooth function having a critical point at $p$, the Hessian of $f$ at $p$ is the quadratic form 
$$
\text{Hess}_p(f)(\dot{\eta}(0),\dot{\eta}(0)) = (f \circ \eta)''(0)
$$
where $\eta$ is any smooth curve with $\eta(0) = p$.

\begin{prop} \label{prop_gaussianbeam_quasimode}
Let $\gamma:[0,L] \to M$ be any unit speed geodesic in $(M,g)$ such that $\gamma(0), \gamma(L) \in \partial M$, $\dot{\gamma}(0)$ and $\dot{\gamma}(L)$ are nontangential, and $\gamma(t) \in M^{\text{int}}$ for $0 < t < L$. Let also $\lambda$ be a fixed real number. For any $K > 0$ there is a family $\{v_s \,;\,  s=\tau+i\lambda, \tau \geq 1\}$ in $C^{\infty}(M)$ such that as $\tau \to \infty$ 
\begin{gather*}
\norm{(-\Delta_{g} - s^2) v_s}_{L^2(M)} = O(\tau^{-K}), \quad \norm{v_s}_{L^2(M)} = O(1), \\
\norm{v_s}_{L^2(\partial M)} = O(1),
\end{gather*}
and for any $\psi \in C(M)$ 
\begin{equation} \label{vs_limit_measure}
\int_{M} \abs{v_{\tau+i\lambda}}^2 \psi \,dV_{g} \to \int_0^L e^{-2\lambda t} \psi(\gamma(t)) \,dt \quad \text{as $\tau \to \infty$}.
\end{equation}
Given any neighborhood of $\gamma([0,L])$ one can arrange that each $v_s$ is supported in this neighborhood, and away from the points where $\gamma$ self-intersects one has $v_s = e^{is\Theta} a$ where $\Theta$ and $a$ are smooth complex functions with 
$$
d\Theta(\gamma(t)) = \dot{\gamma}(t)^{\flat}, \quad a(\gamma(t)) \neq 0 \text{ for $\tau$ large}.
$$
If $\gamma$ does not self-intersect at $\gamma(0)$, the $K$th order jets of $\Theta|_{\partial M}$ and $a|_{\partial M}$ can be prescribed freely at $\gamma(0)$ except for the following restrictions: $d\Theta(\gamma(0)) = \dot{\gamma}(0)^{\flat}$, the Hessian of $\im(\Theta|_{\partial M})$ at $\gamma(0)$ is positive definite, and $a(\gamma(0)) \neq 0$.
\end{prop}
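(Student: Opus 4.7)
I would build $v_s$ via a Gaussian beam ansatz $v_s = e^{is\Theta} a$ concentrated along $\gamma$. First, embed $(M, g)$ into a slightly larger open manifold $(\hat M, g)$ without boundary and extend $\gamma$ as a geodesic past both endpoints, which is possible because $\dot{\gamma}(0)$ and $\dot{\gamma}(L)$ are nontangential. The extended segment has no loops and only finitely many self-intersection times by Lemma \ref{lemma_selfintersection_elementary}, so Lemma \ref{lemma_quasimode_coordinates} yields an open cover $\{(U_j, \varphi_j)\}$ of $\gamma([0, L])$ in modified Fermi coordinates $(t, y)$ in which $\gamma$ is the $t$-axis and each chart contains at most one self-intersection time. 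Using the addendum to Lemma \ref{lemma_quasimode_coordinates} with $S = \partial M$ near $\gamma(0)$ (which is transversal to $\dot{\gamma}(0)$), one may choose $\varphi_0$ so that $y \mapsto \varphi_0^{-1}(0, y)$ parametrizes $\partial M$ near $\gamma(0)$, making the boundary coincide with $\{t = 0\}$ in that chart.

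\textbf{Local construction.} In a single chart, using that the modified Fermi frame gives $g_{ij}(t, 0) = \delta_{ij}$ along $\gamma$, I seek a phase polynomial in $y$ of degree $N = N(K)$,
\[
\Theta_j(t, y) = t + \tfrac{1}{2} H_{\alpha\beta}(t)\, y^\alpha y^\beta + \sum_{3 \leq |\mu| \leq N} \Theta_{j,\mu}(t)\, y^\mu/\mu!,
\]
chosen so that $\abs{d\Theta_j}_g^2 - 1$ vanishes to order $N$ at $y = 0$. The first-order matching fixes $d\Theta_j(\gamma(t)) = \dot{\gamma}(t)^\flat$; the second-order matching yields a matrix Riccati equation for $H(t)$ which preserves symmetry and positive definiteness of $\im H$ once these hold at $t = 0$; higher-order coefficients satisfy linear ODEs along $t$. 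The amplitude is built as $a_j = \tau^{(m-1)/4}(A_0 + s^{-1} A_1 + \cdots + s^{-K} A_K)$, with each $A_k$ solving a transport equation along $\gamma$ to the appropriate order in $y$, and the prefactor $\tau^{(m-1)/4}$ normalizes the transversal Gaussian $\int e^{-2\tau \im H(t)\, y \cdot y}\, dy \sim \tau^{-(m-1)/2}$. All initial data for these ODEs may be prescribed freely at $t = 0$; since $\{t = 0\}$ is $\partial M$ in the chart $U_0$, this yields exactly the freedom to prescribe the $K$-jets of $\Theta|_{\partial M}$ and $a|_{\partial M}$ at $\gamma(0)$, subject only to the three listed constraints coming from the eikonal equation, Gaussian concentration, and nonvanishing of $A_0(0)$.

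\textbf{Global assembly, estimates, and main obstacle.} The compatibility $\varphi_j = \varphi_k$ on overlaps (property (4) of Lemma \ref{lemma_quasimode_coordinates}) together with uniqueness of the ODEs identifies $(\Theta_j, a_j)$ with $(\Theta_k, a_k)$ on $U_j \cap U_k$ within a single passage of $\gamma$, so a partition of unity $\{\chi_j\}$ in $t$ subordinate to $\{I_j\}$ produces a globally defined $v_s = \sum_j \chi_j e^{is\Theta_j} a_j$ supported in an arbitrarily small neighborhood of $\gamma([0, L])$. The order-$N$ vanishing of the eikonal and transport residuals, the pointwise bound $\abs{e^{is\Theta_j}} \lesssim e^{-c\tau \abs{y}^2}$, and the $\tau^{(m-1)/4}$ normalization combine to give $\norm{(-\Delta_g - s^2) v_s}_{L^2(M)} = O(\tau^{-K})$ for $N$ sufficiently large; a Laplace-method computation in each chart then yields $\int_M \abs{v_s}^2 \psi\, dV_g \to \int_0^L e^{-2\lambda t}\psi(\gamma(t))\, dt$ and $\norm{v_s}_{L^2(M)} = O(1)$. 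The boundary estimate $\norm{v_s}_{L^2(\partial M)} = O(1)$ is similar: $\gamma$ meets $\partial M$ transversally only at $t = 0, L$, so the Gaussian profile integrates to an $O(1)$ boundary mass at each endpoint. The main anticipated obstacles are twofold: global preservation of $\im H(t) > 0$ under the matrix Riccati flow (a classical but delicate feature of Gaussian beams), and the need to kill cross-terms between distinct passages through a self-intersection $\gamma(t_j) = \gamma(t_j')$; the latter is handled by non-stationary phase, since the transversality observation in Lemma \ref{lemma_selfintersection_elementary} gives $\dot{\gamma}(t_j) \neq \pm \dot{\gamma}(t_j')$, so $d(\Theta_j - \Theta_k) \neq 0$ at the intersection point.
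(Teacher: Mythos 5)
Your plan follows the paper's proof essentially verbatim: embed $(M,g)$ in a closed manifold, extend $\gamma$ past its endpoints, use the modified Fermi charts of Lemmas \ref{lemma_selfintersection_elementary}--\ref{lemma_quasimode_coordinates}, solve the eikonal equation to high order along $\gamma$ (Riccati equation for $H(t)$ with $\im H>0$ preserved, linear ODEs for the higher jets), build the amplitude $\tau^{(m-1)/4}(a_0+s^{-1}a_{-1}+\cdots)$ from transport equations, glue with cutoffs in $t$ using property (4) of Lemma \ref{lemma_quasimode_coordinates}, handle self-intersections by non-stationary phase, and obtain the boundary-jet freedom by taking $S=\partial M$ in the chart at $\gamma(0)$. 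This is the same route as the paper's.

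Two points in your sketch need attention. First, in the chart adapted to $S=\partial M$ the frame $\{\dot{\gamma}(0),v_1,\ldots,v_{m-1}\}$ is in general not orthonormal (the $v_\alpha$ must be tangent to $\partial M$, which need not be orthogonal to $\dot{\gamma}(0)$), so $g_{1\alpha}(t,0)\neq 0$ and the phase must contain a linear term $\xi_\alpha(t)y^\alpha$ with $\xi_\alpha(t)=g_{\alpha 1}(t,0)$; your ansatz without a linear term is inconsistent with the requirement $d\Theta(\gamma(t))=\dot{\gamma}(t)^{\flat}$ except when $\gamma$ meets $\partial M$ orthogonally. This is easily repaired. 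Second, and more substantively, the Laplace computation does not by itself give \eqref{vs_limit_measure}: it produces a limit of the form $C\int_0^L e^{-2\lambda t}\,\abs{a_0(t,0)}^2 \abs{g(t,0)}^{1/2}\,(\det \im H(t))^{-1/2}\,\psi(\gamma(t))\,dt$, and in order for a single normalizing constant to yield exactly $e^{-2\lambda t}\,dt$ along $\gamma$ you must show that $\abs{a_0(t,0)}^2\abs{g(t,0)}^{1/2}/\sqrt{\det \im H(t)}$ is constant in $t$. This is a genuine step, proved in the paper by computing $\eta_0=\Delta\Theta|_\Gamma=\tr(B+CH)+\frac{1}{2}\partial_t \log\abs{g}$ from the transport equation and invoking the determinant identity $\det\im H(t)=\det\im H(t_0)\,e^{-2\int_{t_0}^t \tr(B+C\re H)\,ds}$ for solutions of the matrix Riccati equation (\cite[Lemma 2.58]{KKL}); your sketch omits this, and without it the limiting measure would carry an uncontrolled $t$-dependent weight.
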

\begin{proof}
We embed $(M,g)$ in a compact manifold $(\hat{M},g)$ without boundary and extend $\gamma$ as a unit speed geodesic in $\hat{M}$. Choose $\eps > 0$ so that $\gamma(t)$ lies in $\hat{M} \smallsetminus M$ and does not self-intersect for $t \in [-2\eps,0) \cup (L,L+2\eps]$. We will construct a Gaussian beam quasimode in a neighborhood of $\gamma([-\eps,L+\eps])$.

Fix a point $p_0 = \gamma(t_0)$ on $\gamma([-\eps,L+\eps])$ and let $(t,y)$ be any local coordinates near $p_0$, defined in $U = \{ (t,y) \,;\, t \in I, \abs{y} < \delta \}$ for some open interval $I$ containing $t_0$, such that $p_0$ corresponds to $(t_0,0)$ and the geodesic near $p_0$ is given by $\Gamma = \{ (t,0) \,;\, t \in I \}$. Write $x = (t,y)$ where $x_1 = t$ and $(x_2,\ldots,x_m) = y$. We seek to find a quasimode $v_s$ concentrated near $\Gamma$, having the form 
$$
v_s = e^{is\Theta} a
$$
where $s = \tau+i\lambda$, and $\Theta$ and $a$ are smooth complex functions near $\Gamma$ with $a$ supported in $\{ \abs{y} < \delta/2 \}$.

We compute 
$$
(-\Delta - s^2) v_s = f
$$
where 
$$
f = e^{is\Theta}(s^2 [(\langle d\Theta, d\Theta \rangle - 1)a] - is[2\langle d\Theta, da \rangle + (\Delta \Theta) a] - \Delta a).
$$
We first choose $\Theta$ so that 
\begin{equation} \label{theta_nth_order}
\langle d\Theta, d\Theta \rangle = 1 \quad \text{to $N$th order on $\Gamma$}.
\end{equation}
In fact we look for $\Theta$ of the form $\Theta = \sum_{j=0}^N \Theta_j$ where 
$$
\Theta_j(t,y) = \sum_{\abs{\gamma} = j} \frac{\Theta_{j,\gamma}(t)}{\gamma!} y^{\gamma}.
$$
We also write $g^{jk} = \sum_{l=0}^N g^{jk}_l + g^{jk}_{N+1}$ where 
$$
g^{jk}_l(t,y) = \sum_{\abs{\beta} = l} \frac{g^{jk}_{l,\beta}(t)}{\beta!} y^{\beta}, \quad g^{jk}_{N+1} = O(\abs{y}^{N+1}).
$$
Set $g^{jk}_l = 0$ for $l \geq N+2$.

With the understanding that $j,k$ run from $1$ to $m$ and $\alpha,\beta$ run from $2$ to $m$, the main part of the argument will consist of finding suitable $\Theta_0$, $\Theta_1$ and $\Theta_2$ in the following form:
\begin{gather*}
\Theta_0(t) \text{ real valued}, \\
\Theta_1(t) = \xi_{\alpha}(t) y^{\alpha} \text{ with $\xi_{\alpha}(t)$ real valued}, \\
\Theta_2(t) = \frac{1}{2} H_{\alpha \beta}(t) y^{\alpha} y^{\beta}
\end{gather*}
where $H(t) = (H_{\alpha \beta}(t))$ is a complex symmetric matrix, $H_{\alpha \beta} = H_{\beta \alpha}$, such that $\im(H(t))$ is positive definite for all $t$. We also write 
$$
\xi_1(t) = \partial_t \Theta_0(t).
$$

Since $\partial_t \Theta_0 = \xi_1$ and $\partial_{\alpha} \Theta_1 = \xi_{\alpha}$, we compute 
\begin{multline*}
g^{jk} \partial_j \Theta \partial_k \Theta - 1 = g^{11}(\partial_t \Theta_0+\partial_t \Theta_1+\ldots)(\partial_t \Theta_0+\partial_t \Theta_1+\ldots) \\
 + 2 g^{1\alpha} (\partial_t \Theta_0+\partial_t \Theta_1+\ldots)(\partial_{\alpha} \Theta_1+\partial_{\alpha} \Theta_2+\ldots) \\
 + g^{\alpha \beta}(\partial_{\alpha} \Theta_1+\partial_{\alpha} \Theta_2+\ldots)(\partial_{\beta} \Theta_1+\partial_{\beta} \Theta_2+\ldots) - 1 \\
  = g^{jk} \xi_j \xi_k + 2 g^{11} \xi_1 (\partial_t \Theta_1 + \ldots) + 2 g^{1 \alpha} \xi_1 (\partial_{\alpha} \Theta_2 + \ldots) + 2 g^{1 \alpha} \xi_{\alpha} (\partial_t \Theta_1 + \ldots) \\
  + 2 g^{\alpha \beta} \xi_{\alpha} (\partial_{\beta} \Theta_2 + \ldots) + g^{11}(\partial_t \Theta_1+\ldots)(\partial_t \Theta_1+\ldots) \\
  + 2 g^{1\alpha} (\partial_t \Theta_1+\ldots)(\partial_{\alpha} \Theta_2+\ldots) + g^{\alpha \beta}(\partial_{\alpha} \Theta_2+\ldots)(\partial_{\beta} \Theta_2+\ldots) - 1 \\
  = g^{jk} \xi_j \xi_k + 2 g^{1k} \xi_k (\partial_t \Theta_1 + \ldots) + 2 g^{\alpha k} \xi_k (\partial_{\alpha} \Theta_2 + \ldots) + g^{11}(\partial_t \Theta_1+\ldots)(\partial_t \Theta_1+\ldots) \\
  + 2 g^{1\alpha} (\partial_t \Theta_1+\ldots)(\partial_{\alpha} \Theta_2+\ldots) + g^{\alpha \beta}(\partial_{\alpha} \Theta_2+\ldots)(\partial_{\beta} \Theta_2+\ldots) - 1.
\end{multline*}
Writing $g^{jk} = g^{jk}_0 + g^{jk}_1 + \ldots$ and grouping like powers of $y$, we obtain 
\begin{multline} \label{thetaequation_firstref}
g^{jk} \partial_j \Theta \partial_k \Theta - 1 = [ g^{jk}_0 \xi_j \xi_k - 1] \\
 + [g_1^{jk} \xi_j \xi_k +  2 g_0^{1k} \xi_k \dot{\xi}_{\beta} y^{\beta} + 2 g_0^{\alpha k} \xi_k H_{\alpha \beta} y^{\beta} ] \\
 + (g_2^{jk} + \ldots) \xi_j \xi_k + 2 g_0^{1k} \xi_k (\partial_t \Theta_2 + \ldots) + 2 (g_1^{1k} + \ldots) \xi_k (\partial_t \Theta_1 + \ldots) \\
 + 2 g_0^{\alpha k} \xi_k (\partial_{\alpha} \Theta_3 + \ldots) + 2 (g_1^{\alpha k} + \ldots) \xi_k (\partial_{\alpha} \Theta_2 + \ldots) + g^{11}(\partial_t \Theta_1+\ldots)(\partial_t \Theta_1+\ldots) \\
  + 2 g^{1\alpha} (\partial_t \Theta_1+\ldots)(\partial_{\alpha} \Theta_2+\ldots) + g^{\alpha \beta}(\partial_{\alpha} \Theta_2+\ldots)(\partial_{\beta} \Theta_2+\ldots).
\end{multline}

We can make the two expressions in brackets to vanish by choosing $\xi(t)$ to be part of the solution $(x(t),\xi(t))$ of the cogeodesic flow with Hamiltonian $h(x,\xi) = \frac{1}{2} g^{jk}(x) \xi_j \xi_k$, 
\begin{align*}
\dot{x}^j(t) &= \partial_{\xi_j} h(x(t),\xi(t)), \\
\dot{\xi}_j(t) &= -\partial_{x_j} h(x(t),\xi(t)).
\end{align*}
There is a unique solution with $x(t_0) = p_0$ and $\xi(t_0) = \dot{\gamma}(t_0)^{\flat}$ (here we raise and lower indices with respect to the metric $g$). It follows that $x(t)$ is the unit speed geodesic $t \mapsto (t,0)$, and $\xi^j(t) = \dot{x}^j(t)$. Then $g_0^{jk} \xi_j \xi_k = 1$, and with our choice of coordinates $\xi^1 = 1$ and $\xi^{\alpha} = 0$ so that also 
$$
g_0^{1k} \xi_k = 1, \quad g_0^{\alpha k} \xi_k = 0.
$$
We further have 
$$
\dot{\xi}_{\beta} y^{\beta} = -\frac{1}{2} \partial_{x_{\beta}} g^{jk}(t,0) \xi_j \xi_k y^{\beta} = -\frac{1}{2} g_1^{jk} \xi_j \xi_k.
$$

Noting that $\partial_1$ has unit length, we have 
$$
\xi_1 = g_{1k}(t,0) \xi^k = 1.
$$
Since $\xi_{\alpha} = g_{\alpha k}(t,0) \xi^k =g_{\alpha 1}(t,0)$, we can therefore choose 
\begin{gather*}
\Theta_0(t) = t, \\
\Theta_1(t) = g_{\alpha 1}(t,0) y^{\alpha}.
\end{gather*}
Using these choices and the facts above, in \eqref{thetaequation_firstref} the expressions in brackets will indeed vanish and one obtains 
\begin{multline*}
g^{jk} \partial_j \Theta \partial_k \Theta - 1 =  (g_2^{jk} + \ldots) \xi_j \xi_k + 2 (\partial_t \Theta_2 + \ldots) + 2 (g_1^{1k} + \ldots) \xi_k (\partial_t \Theta_1 + \ldots) \\
 + 2 (g_1^{\alpha k} + \ldots) \xi_k (\partial_{\alpha} \Theta_2 + \ldots) + g^{11}(\partial_t \Theta_1+\ldots)(\partial_t \Theta_1+\ldots) \\
  + 2 g^{1\alpha} (\partial_t \Theta_1+\ldots)(\partial_{\alpha} \Theta_2+\ldots) + g^{\alpha \beta}(\partial_{\alpha} \Theta_2+\ldots)(\partial_{\beta} \Theta_2+\ldots) \\
  = \Big[ g_2^{jk} \xi_j \xi_k + 2 \partial_t \Theta_2 + 2 g_1^{\alpha k} \xi_k \partial_{\alpha} \Theta_2 + 2 g_0^{1 \alpha} \partial_t \Theta_1 \partial_{\alpha} \Theta_2 + g_0^{\alpha \beta} \partial_{\alpha} \Theta_2 \partial_{\beta} \Theta_2 \\
  + 2 g_1^{1k} \xi_k \partial_t \Theta_1 + g_0^{11} (\partial_t \Theta_1)^2 \Big] \\
  + \sum_{p=3}^N \Big[ g_p^{jk} \xi_j \xi_k + 2 \partial_t \Theta_p + 2 g_1^{\alpha k} \xi_k \partial_{\alpha} \Theta_p + 2 g_0^{1 \alpha} \partial_t \Theta_1 \partial_{\alpha} \Theta_p + 2 g_0^{\alpha \beta} \partial_{\alpha} \Theta_2 \partial_{\beta} \Theta_p \\
  + 2 \sum_{l=1}^{p-1} g_{p-l}^{1k} \xi_k \partial_t \Theta_l 
  + 2 \sum_{l=2}^{p-1} g_{p-l+1}^{\alpha k} \xi_k \partial_{\alpha} \Theta_l + \sum_{l=0}^{p-2} g_l^{11} \sum_{\overset{r+s=p-l}{1 \leq r,s < p}} \partial_t \Theta_r \partial_t \Theta_s \\
  + \sum_{l=0}^{p-2} g_l^{1 \alpha} \sum_{\overset{r+s=p-l+1}{\overset{1 \leq r < p}{2 \leq s < p}}} \partial_t \Theta_r \partial_{\alpha} \Theta_s + \sum_{l=0}^{p-2} g_l^{\alpha \beta} \sum_{\overset{r+s=p-l+2}{2 \leq r,s < p}} \partial_{\alpha} \Theta_r \partial_{\beta} \Theta_s \Big] + O(\abs{y}^{N+1}).
\end{multline*}

We want to choose $\Theta_2$ so that the first term in brackets vanishes. Recalling that we are looking for $\Theta_2$ in the form $\Theta_2(t,y) = \frac{1}{2} H_{\alpha \beta}(t) y^{\alpha} y^{\beta}$ where $H(t)$ is a smooth complex symmetric matrix, it follows that $H$ should satisfy the matrix equation 
\begin{equation*}
\dot{H}_{\alpha \beta} y^{\alpha} y^{\beta} + 2 g_1^{\gamma k} \xi_k H_{\gamma \beta} y^{\beta} + 2 g_0^{1 \gamma} \partial_t \Theta_1 H_{\gamma \beta} y^{\beta} + g_0^{\gamma \delta} H_{\gamma \alpha} H_{\delta \beta} y^{\alpha} y^{\beta} = F_{\alpha \beta} y^{\alpha} y^{\beta}
\end{equation*}
where $F(t)$ is a real valued smooth symmetric matrix. This can be further written as the matrix Riccati equation 
$$
\dot{H} + BH + HB^t + HCH = F
$$
where $B(t)$ and $C(t)$ are real smooth matrices and $C$ is symmetric. More precisely, since $g_1^{jk} = \partial_{\alpha} g^{jk}(t,0) y^{\alpha}$ we have 
\begin{equation} \label{gaussianbeam_b_c_definition}
B_{\alpha}^{\gamma} = \partial_{\alpha} g^{\gamma k}(t,0) \xi_k + g_0^{1 \gamma} \dot{\xi}_{\alpha}, \quad C^{\gamma \delta} = g_0^{\gamma \delta}.
\end{equation}
Choosing $H(t_0) = H_0$ where $H_0$ is a complex symmetric matrix with $\im(H_0)$ positive definite, it follows that the Riccati equation has a unique smooth complex symmetric solution $H(t)$ with $\im(H(t))$ positive definite \cite{KKL}. This completes the construction of $\Theta_2$. From the lower order terms we can find $\Theta_3,\ldots,\Theta_N$ successively by solving linear first order ODEs on $\Gamma$ with prescribed initial conditions at $t_0$. In this way, we obtain a smooth $\Theta$ satisfying \eqref{theta_nth_order}.

The next step is to find $a$ such that 
$$
s[2\langle d\Theta, da \rangle + (\Delta \Theta) a] - i\Delta a = 0 \quad \text{to $N$th order on $\Gamma$}.
$$
We look for $a$ in the form 
$$
a = \tau^{\frac{m-1}{4}}(a_0 + s^{-1} a_{-1} + \ldots + s^{-N} a_{-N}) \chi(y/\delta')
$$
where $\chi$ is a smooth function with $\chi = 1$ for $\abs{y} \leq 1/4$ and $\chi = 0$ for $\abs{y} \geq 1/2$. Writing $\eta = \Delta \Theta$, it is sufficient to determine $a_j$ so that 
\begin{align*}
2\langle d\Theta,d a_0 \rangle + \eta a_0 &= 0 \quad \text{to $N$th order on $\Gamma$}, \\
2\langle d\Theta,d a_{-1} \rangle + \eta a_{-1} - i\Delta a_0 &= 0 \quad \text{to $N$th order on $\Gamma$}, \\
\vdots, & \\
2\langle d\Theta,d a_{-N} \rangle + \eta a_{-N} - i\Delta a_{-(N-1)} &= 0 \quad \text{to $N$th order on $\Gamma$}.
\end{align*}
Consider $a_0 = a_{00} + \ldots + a_{0N}$ where $a_{0j}(t,y)$ is a polynomial of order $j$ in $y$, and similarly let $\eta = \eta_0 + \ldots + \eta_N$. We compute 
\begin{multline*}
2\langle d\Theta,d a_0 \rangle + \eta a_0 = 2(g^{11}_0+\ldots)(\partial_t \Theta_0+\ldots)(\partial_t a_{00}+\ldots) \\
 + 2(g^{1\alpha}_0+\ldots)(\partial_t \Theta_0+\ldots)(\partial_{\alpha} a_{01}+\ldots) \\
 + 2(g^{1\alpha}_0+\ldots)(\partial_{\alpha} \Theta_1+\ldots)(\partial_t a_{00}+\ldots) \\
 + 2(g_0^{\alpha \beta}+\ldots)(\partial_{\beta} \Theta_1+\ldots)(\partial_{\alpha} a_{01}+\ldots) + (\eta_0+\eta_1+\ldots)(a_{00}+a_{01}+\ldots).
\end{multline*}
Recalling that $\partial_t \Theta_0 = \xi_1 = 1$, $\partial_{\alpha} \Theta_1 = \xi_{\alpha}$ where $g_0^{1j} \xi_j = 1$ and $g_0^{\alpha j} \xi_j = 0$, we obtain 
\begin{multline*}
2\langle d\Theta,d a_0 \rangle + \eta a_0 = 2 \big[ g_0^{11} \xi_1 + g_0^{11} (\partial_t \Theta_1 + \ldots) + (g_1^{11}+\ldots)(\partial_t \Theta_0 + \ldots) \\
 + g_0^{1 \alpha} \xi_{\alpha} + g_0^{1 \alpha}(\partial_{\alpha} \Theta_2 + \ldots) + (g_1^{1 \alpha} + \ldots)(\partial_{\alpha} \Theta_1 + \ldots) \big] (\partial_t a_{00} + \ldots) \\
 + 2 \big[ g_0^{1 \alpha}(\partial_t \Theta_1 + \ldots) + (g_1^{1 \alpha} + \ldots)(\partial_t \Theta_0 + \ldots) + g_0^{\alpha \beta} (\partial_{\beta} \Theta_2 + \ldots) \\
 + (g_1^{\alpha \beta} + \ldots)(\partial_{\beta} \Theta_1 + \ldots) \big] (\partial_{\alpha} a_{01} + \ldots) + (\eta_0+\eta_1+\ldots)(a_{00}+a_{01}+\ldots) \\
 = [ 2 \partial_t a_{00} + \eta_0 a_{00} ] + \sum_{p=1}^N \Big[ 2 \partial_t a_{0p} + q_p^{\alpha \beta} y^{\beta} \partial_{\alpha} a_{0p} + \eta_0 a_{0p} + F_p \Big] + O(\abs{y}^{N+1})
\end{multline*}
where $q_p^{\alpha \beta}(t)$ are smooth functions only depending on $g$ and $\Theta$, and $F_p(t,y)$ is a polynomial of degree $p$ in $y$ that only depends on $g$, $\Theta$, $\eta$ and $a_{00}, \ldots, a_{0,p-1}$.

We want to choose $a_{00}$ so that the first term in brackets vanishes, that is, 
$$
\partial_t a_{00} + \frac{1}{2} \eta_0 a_{00} = 0.
$$
This has the solution 
$$
a_{00}(t) = c_0 e^{-\frac{1}{2} \int_{t_0}^t \eta_0(s) \,ds}, \quad a_{00}(t_0) = c_0.
$$
We obtain $a_{01},\ldots,a_{0N}$ successively by solving linear first order ODEs with prescribed initial conditions at $t_0$. The functions $a_1,\ldots,a_N$ may be determined in a similar way so that the required equations are satisfied to $N$th order on $\Gamma$. This completes the construction of $a$.

To review what has been achieved so far, we have constructed a function $v_s = e^{is\Theta} a$ in $U$ where 
\begin{align*}
\Theta(t,y) &= t + \xi_{\alpha}(t) y^{\alpha} + \frac{1}{2} H_{\alpha \beta}(t) y^{\alpha} y ^{\beta} + \tilde{\Theta}, \\
a(t,y) &= \tau^{\frac{m-1}{4}}(a_0 + s^{-1} a_{-1} + \ldots + s^{-N} a_{-N}) \chi(y/\delta'), \\
a_0(t,0) &= c_0 e^{-\frac{1}{2} \int_{t_0}^t \eta_0(s) \,ds}.
\end{align*}
Here $\tilde{\Theta} = O(\abs{y}^3)$ and $\tilde{\Theta}$ and each $a_j$ are independent of $\tau$. Also, $f = (-\Delta-s^2)v_s$ has the form 
$$
f = e^{is\Theta} \tau^{\frac{m-1}{4}}(s^2 h_2 a + s h_1 + \ldots + s^{-(N-1)} h_{-(N-1)} + i s^{-N} \Delta(a_{-N} \chi(y/\delta')))
$$
where for each $j$ one has $h_j = 0$ to $N$th order on $\Gamma$. We also note that $d\Theta(\gamma(t)) = \dot{\gamma}(t)^{\flat}$ and $\text{Hess}_{\gamma(t_0)}(\im(\Theta|_{\{t=t_0\}})) = \im(H(t_0))$.

To prove the norm estimates for $v_s$ in $U$, note that 
\begin{align*}
\abs{e^{is\Theta}} &= e^{-\lambda \re\,\Theta} e^{-\tau \im\,\Theta} = e^{-\lambda t} e^{-\frac{1}{2}\tau \im(H(t))y \cdot y} e^{-\lambda O(\abs{y})} e^{-\tau O(\abs{y}^3)}.
\end{align*}
Here $\im(H(t))y \cdot y \geq c\abs{y}^2$ for $(t,y) \in U$ where $c>0$ depends on $H_0$, $g$ and $I$. By decreasing $\delta'$ if necessary, this shows the following bound when $t$ in a fixed compact set:
$$
\abs{v_s(t,y)} \lesssim \tau^{\frac{m-1}{4}} e^{-\frac{1}{4}c \tau \abs{y}^2} \chi(y/\delta').
$$
Integrating the square of this over $U$ we get, as $\tau \to \infty$, 
$$
\norm{v_s}_{L^2(U)} \lesssim \norm{\tau^{\frac{m-1}{4}} e^{-\frac{1}{4}c \tau \abs{y}^2}}_{L^2(U)} = O(1).
$$
Similarly we have 
\begin{align*}
\norm{(-\Delta-s^2)v_s}_{L^2(U)} &\lesssim \norm{\tau^{\frac{m-1}{4}} e^{-\frac{1}{4}c \tau \abs{y}^2}(\tau^2 \abs{y}^{N+1} + \tau^{-N})}_{L^2(U)} \\
 &= O(\tau^{\frac{3-N}{2}}).
\end{align*}
The norm estimates for $v_s$ in $U$ follow upon replacing $N$ by $2K+3$.

For the $L^2(\partial M)$ estimate, if $U$ contains a boundary point $x_0 = (t_0,0) \in \partial M$, then by assumption $\frac{\partial}{\partial t}|_{x_0}$ is transversal to $\partial M$. If $\rho$ is a boundary defining function for $M$, so $\partial M$ is given as the zero set $\rho(t,y) = 0$ near $x_0$ and $\nabla \rho = -\nu$ on $\partial M$, then $\frac{\partial \rho}{\partial t}(x_0) \neq 0$ and by the implicit function theorem there is a smooth function $y \mapsto t(y)$ near $0$ such that $\partial M$ is given by $\{ (t(y), y) \,;\, \abs{y} < r_0 \}$ near $x_0$. The bound for $v_s$ given above implies that for $\delta'$ small 
\begin{align*}
\norm{v_s}_{L^2(\partial M \cap U)}^2 = \int_{\abs{y} < r_0} \abs{v_s(t(y),y)} \,dS(y) \\
 \lesssim \int_{\abs{y} < r_0} \tau^{\frac{m-1}{2}} e^{-\frac{1}{2}c \tau \abs{y}^2} \,dy = O(1).
\end{align*}

At this point we can construct the quasimode $v_s$ in $M$ from the corresponding quasimodes defined on small pieces. Let $\gamma([-\eps,L+\eps])$ be covered by open sets $U^{(0)}, \ldots, U^{(N+1)}$ as in Lemma \ref{lemma_quasimode_coordinates}, and note that each $U^{(j)}$ corresponds to $I_j \times B(0,\delta)$ in the $(t,y)$ coordinates. Suppose first that $\gamma$ does not self-intersect at time $t=0$. We find a quasimode $v^{(0)} = e^{is\Theta^{(0)}} a^{(0)}$ in $U^{(0)}$ by the above procedure, with some fixed initial conditions at $t=0$ for the ODEs determining $\Theta^{(0)}$ and $a^{(0)}$. Choose some $t_0'$ with $\gamma(t_0') \in U^{(0)} \cap U^{(1)}$, and construct a quasimode $v^{(1)} = e^{is\Theta^{(1)}} a^{(1)}$ in $U^{(1)}$ by choosing the initial conditions for the ODEs for $\Theta^{(1)}$ and $a^{(1)}$ at $t_0'$ to be the corresponding values of $\Theta^{(0)}$ and $a^{(0)}$ at $t_0'$. Continuing in this way we obtain $v^{(2)}, \ldots, v^{(N+1)}$. If $\gamma$ self-intersects at $t=0$, we start the construction from $v^{(1)}$ fixing initial conditions for the ODEs at $t=0$, and find $v^{(0)}$ by going backward.

Let $\{\tilde{\chi}_j(t)\}$ be a partition of unity near $[-\eps,L+\eps]$ corresponding to the cover $\{I_j\}$, let $\chi_j(t,y) = \tilde{\chi}_j(t)$ on $U^{(j)}$, and define 
$$
v_s = \sum_{j=0}^{N+1} \chi_j v^{(j)}.
$$
Then $v_s$ is smooth in $\hat{M}$ and it is supported in a small neighborhood of $\gamma([-\eps,L+\eps])$. The important point is that since the ODEs for the phase functions and amplitudes have the same initial data in $U^{(j)}$ and in $U^{(j+1)}$, and since the local coordinates $\varphi_j$ and $\varphi_{j+1}$ coincide on $\varphi_j^{-1}((I_j \cap I_{j+1}) \times B)$, one actually has $v^{(j)} = v^{(j+1)}$ in $\varphi_j^{-1}((I_j \cap I_{j+1}) \times B)$. Letting $p_1,\ldots, p_R$ be the points where $\gamma$ intersects itself, we choose an open cover of $\supp(v_s) \cap M$,
$$
\supp(v_s) \cap M \subset \left( \cup_{r=1}^R V_r \right) \cup \left( \cup_{j=0}^{N+1} (W_{j,0} \cup W_{j,1}) \right),
$$
where $V_r$ are small neighborhoods the points $p_r$ and $W_{j,0}, W_{j,1} \subset U^{(j)}$, such that 
\begin{gather*}
v_s|_{V_r} = \sum_{\gamma(t_j) = p_r} v^{(j)}, \\
v_s|_{W_{j,l}} = v^{(j+l)}.
\end{gather*}
Since $v_s$ is a finite sum of the $v^{(j)}$ in each case, the $L^2(M)$ bounds for $v_s$ and $(-\Delta-s^2) v_s$ and the $L^2(\partial M)$ bounds for $v_s$ follow from corresponding bounds for the $v^{(j)}$. The form of $v_s$ near points where $\gamma$ does not self-intersect and the possibility to prescribe the $K$th order jets of $\Theta|_{\partial M}$ and $a|_{\partial M}$ at $\gamma(0)$ follow from the construction and Lemma \ref{lemma_quasimode_coordinates}.

To conclude the proof, using a partition of unity it is enough to verify the limit \eqref{vs_limit_measure} for any $\psi$ supported in one of the sets $V_r \cap M$ or $W_{j,l} \cap M$. Further, we can choose the sets $V_r$ to be so small that the real part of $d\Theta^{(j)} - d\Theta^{(k)}$ is nonvanishing near $V_r$ if $\gamma(t_j) = \gamma(t_k) = p_r$ but $j \neq k$. This follows since 
$$
\re(d\Theta^{(j)} - d\Theta^{(k)})(p_r) = \dot{\gamma}(t_j)^{\flat} - \dot{\gamma}(t_k)^{\flat} \neq 0.
$$
Here we may need to decrease $\delta$ so that we still have an open cover.

Consider first the case where $\psi \in C_c(W_{j,l} \cap M)$. Here the support of $\psi$ may reach $\partial M$, and we extend $\psi$ by zero outside of $W_{j,l} \cap M$. Suppose that $v_s = e^{is\Theta} a$ where $\Theta = t + \xi_{\alpha} y^{\alpha} + \frac{1}{2}H(t)y \cdot y + O(\abs{y}^3)$ and $a = \tau^{\frac{m-1}{4}}(a_0 + O(\tau^{-1})) \chi(y/\delta')$, and let $\rho = \abs{g}^{1/2}$. Then 
\begin{align*}
 &\int_{M} \abs{v_{\tau+i\lambda}}^2 \psi \,dV_{g} \\
 &= \int_{-\infty}^{\infty} \int_{\mR^{m-1}} e^{-2\lambda t} e^{-\tau \im(H(t))y \cdot y} e^{\tau O(\abs{y}^3)} e^{O(\abs{y})} \tau^{\frac{m-1}{2}} (\abs{a_0}^2 + O(\tau^{-1})) \chi(y/\delta')^2 \psi \rho \,dt \,dy \\
 &= \int_{-\infty}^{\infty} e^{-2\lambda t} \int_{\mR^{m-1}} e^{-\im(H(t))y \cdot y} e^{\tau^{-1/2} O(\abs{y}^3)} e^{\tau^{-1/2} O(\abs{y})} \times \\
 &\qquad (\abs{a_0(t,\tau^{-1/2}y)}^2 + O(\tau^{-1})) \chi(y/\tau^{1/2} \delta')^2 \psi(t,\tau^{-1/2} y) \rho(t,\tau^{-1/2}y) \,dt \,dy.
\end{align*}
Since $\im(H(t))$ is positive definite and $\delta'$ is sufficiently small, the term $e^{-\im(H(t))y \cdot y}$ dominates the other exponentials and one obtains 
\begin{align*}
 &\lim_{\tau \to \infty} \int_{M} \abs{v_{\tau+i\lambda}}^2 \psi \,dV_{g} \\
 &= \int_0^L e^{-2\lambda t} \left( \int_{\mR^{m-1}} e^{-\im(H(t))y \cdot y} \,dy \right) \abs{a_0(t,0)}^2 \psi(t,0) \rho(t,0) \,dt.
\end{align*}
Evaluating the integral over $y$ gives 
$$
\lim_{\tau \to \infty} \int_{M} \abs{v_{\tau+i\lambda}}^2 \psi \,dV_{g} = C_m \int_0^L e^{-2\lambda t} \frac{\abs{a_0(t,0)}^2 \rho(t,0)}{\sqrt{\det\,\im(H(t))}} \psi(t,0) \,dt.
$$
We will prove below that 
\begin{equation} \label{a0_detimht_constant}
\frac{\abs{a_0(t,0)}^2 \rho(t,0)}{\sqrt{\det\,\im(H(t))}} = \text{const}.
\end{equation}
The limit \eqref{vs_limit_measure} will follow upon dividing the family $\{ v_s \}$ by a suitable constant.

If $\psi \in C_c(V_r \cap M)$ (again $\supp(\psi)$ may extend up to $\partial M$), we have 
$$
v_s|_{V_r} = \sum_{\gamma(t_j) = p_r} v^{(j)},
$$
so that on $V_r$ 
$$
\abs{v_s}^2 = \sum_{\gamma(t_j) = p_r} \abs{v^{(j)}}^2 + \sum_{\gamma(t_j) = \gamma(t_k) = p_r, j \neq k} v^{(j)} \overline{v^{(k)}}.
$$
We arranged earlier that $\re(d\Theta^{(j)} - d\Theta^{(k)})$ is nonvanishing near $V_r$ if $\gamma(t_j) = \gamma(t_k) = p_r$ but $j \neq k$. Thus the cross terms give rise to terms of the form 
$$
\int_{V_r \cap M} v^{(j)} \overline{v^{(k)}} \psi \,dV = \int_{V_r \cap M} e^{i\tau \phi} w^{(j)} \overline{w^{(k)}} \psi \,dV
$$
where $\phi = \re(\Theta^{(j)} - \Theta^{(k)})$ has nonvanishing gradient in $V_r$, and $w^{(l)} = e^{is \im(\Theta^{(l)})} e^{-\lambda \re(\Phi^{(l)})} a^{(l)}$. We wish to prove that 
\begin{equation} \label{vr_crossterm_limit}
\lim_{\tau \to \infty} \int_{V_r \cap M} e^{i\tau \phi} w^{(j)} \overline{w^{(k)}} \psi \,dV = 0, \quad j \neq k,
\end{equation}
showing that the cross terms vanish in the limit and the previous computation for $\abs{v^{(l)}}^2$ shows the limit \eqref{vs_limit_measure} also when $\psi$ is supported in some $V_r \cap M$. To show \eqref{vr_crossterm_limit}, let $\eps > 0$, and decompose $\psi = \psi_1 + \psi_2$ where $\psi_1 \in C^{\infty}_c(V_r \cap M)$ and $\norm{\psi_2}_{L^{\infty}(V_r \cap M)} \leq \eps$. Then 
$$
\abs{\int_{V_r \cap M} e^{i\tau \phi} w^{(j)} \overline{w^{(k)}} \psi_2 \,dV} \lesssim \norm{w^{(j)}}_{L^2} \norm{w^{(k)}}_{L^2} \norm{\psi_2}_{L^{\infty}} \lesssim \eps
$$
since $\norm{w^{(l)}}_{L^2} \lesssim \norm{v^{(l)}}_{L^2} \lesssim 1$. For the smooth part $\psi_1$, we employ a non-stationary phase argument and integrate by parts using that 
$$
e^{i\tau \phi} = \frac{1}{i\tau} L(e^{i\tau \phi}), \quad Lw = \langle \abs{d\phi}^{-2} d\phi, dw \rangle.
$$
This gives 
\begin{multline*}
\int_{V_r \cap M} e^{i\tau \phi} w^{(j)} \overline{w^{(k)}} \psi_1 \,dV = \int_{\partial M} \frac{\partial_{\nu} \phi}{i\tau\abs{d\phi}^2} v^{(j)} \overline{v^{(k)}} \psi_1 \,dS \\
+ \frac{1}{i\tau} \int_{V_r \cap M} e^{i\tau \phi} L^t(w^{(j)} \overline{w^{(k)}} \psi_1) \,dV.
\end{multline*}
Since $\norm{v^{(l)}}_{L^2(\partial M)} = O(1)$, the boundary term can be made arbitrarily small as $\tau \to \infty$. As for the last term, the worst behavior is when the transpose $L^t$ acts on $e^{is\im(\Theta^{(l)})}$, and these terms have bounds of the form 
$$
\norm{\abs{d(\im(\Theta^{(j)}))} v^{(j)}}_{L^2} \norm{v^{(k)}}_{L^2} \norm{\psi_1}_{L^{\infty}}.
$$
Here $\abs{d(\im(\Theta^{(j)}))} \lesssim \abs{y}$ if $(t,y)$ are coordinates along the geodesic segment corresponding to $v^{(j)}$, and the computation above for $\norm{v^{(j)}}_{L^2}$ shows that 
$$
\norm{\abs{d(\im(\Theta^{(j)}))} v^{(j)}}_{L^2} \norm{v^{(k)}}_{L^2} \norm{\psi_1}_{L^{\infty}} \lesssim \tau^{-1/2}.
$$
This finishes the proof of \eqref{vr_crossterm_limit} and also of \eqref{vs_limit_measure}.

It remains to show \eqref{a0_detimht_constant}. We have 
$$
\abs{a_0(t,0)}^2 \rho(t,0) = \abs{c_0}^2 e^{-\int_{t_0}^t \re(\eta_0)(s) \,ds} \abs{g(t,0)}^{1/2}.
$$
Note that $\eta_0(t)$ is given by 
\begin{align*}
\eta_0(t) &= \Delta \Theta(t,0) \\
 &= (g^{jk} \partial_{jk} \Theta + \partial_j g^{jk} \partial_k \Theta + \abs{g}^{-1/2} \partial_j(\abs{g}^{1/2}) g^{jk} \partial_k \Theta)(t,0) \\
 &= g^{11} \partial_t^2 \Theta_0 + 2 g^{1 \alpha} \partial_{t \alpha} \Theta_1 + g^{\alpha \beta} \partial_{\alpha \beta} \Theta_2 + \partial_j g^{j1} \partial_t \Theta_0 + \partial_j g^{j \alpha} \partial_{\alpha} \Theta_1 \\
 &\quad \quad + \frac{1}{2} \partial_j(\log \abs{g}) (g^{j1} \partial_t \Theta_0 + g^{j \alpha} \partial_{\alpha} \Theta_1) \\
 &= 2 g^{1 \alpha} \dot{\xi}_{\alpha} + g^{\alpha \beta} H_{\alpha \beta} + (\partial_j g^{jk}) \xi_k + \frac{1}{2} \partial_j(\log \abs{g}) g^{jk} \xi_k.
\end{align*}
The conditions $g^{jk} \xi_k = \delta_1^j$ and $g^{1 \alpha} \dot{\xi}_{\alpha} = g^{1k} \dot{\xi}_k = - (\partial_t g^{1k}) \xi_k$ at $(t,0)$, together with the general fact that $\partial_t(\log \abs{g}) = -g_{jk} \partial_t g^{jk}$, imply that 
$$
\eta_0(t) = g^{1 \alpha} \dot{\xi}_{\alpha} + g^{\alpha \beta} H_{\alpha \beta} + (\partial_{\alpha} g^{\alpha k}) \xi_k + \frac{1}{2} \partial_t(\log \abs{g}).
$$
Recalling the definition of the $B$ and $C$ matrices in \eqref{gaussianbeam_b_c_definition}, this says precisely that 
\begin{align*}
\eta_0(t) &= B_{\alpha}^{\alpha} + C^{\alpha \gamma} H_{\gamma \alpha} + \frac{1}{2} \partial_t(\log \abs{g}) \\
 &= \tr(B(t) + C(t)H(t)) + \frac{1}{2} \partial_t(\log \abs{g}).
\end{align*}
Consequently $\abs{a_0(t,0)}^2 \rho(t,0) = c_0' e^{-\int_{t_0}^t \tr(B(s) + C(s)\re(H(s))) \,ds}$. On the other hand, by \cite[Lemma 2.58]{KKL} solutions of the matrix Riccati equation have the property that 
$$
\det \im(H(t)) = \det \im(H(t_0)) e^{-2 \int_{t_0}^t \tr(B(s) + C(s)\re(H(s))) \,ds}.
$$
This proves the result.
\end{proof}

The proof of Proposition \ref{prop_gaussianbeam_quasimode_reflected} now follows rather quickly from the way we have set up the previous result.

\begin{proof}[Proof of Proposition \ref{prop_gaussianbeam_quasimode_reflected}]
Let $\gamma: [0,L] \to M$ be a nontangential broken ray with endpoints on $E$, and let $0 < t_1 < \ldots < t_N < L$ be the times of reflection. Let $v_s^{(0)}$ be a Gaussian beam quasimode as in Proposition \ref{prop_gaussianbeam_quasimode} associated with the geodesic $\gamma|_{[0,t_1]}$. We will construct another Gaussian beam quasimode $v_s^{(1)}$ associated with $\gamma|_{[t_1,t_2]}$ such that $v_s^{(0)} - v_s^{(1)}|_{\partial M}$ will be small near $\gamma(t_1)$.

In fact, by Proposition \ref{prop_gaussianbeam_quasimode} we have $v_s^{(j)} = e^{is \Theta^{(j)}} a^{(j)}$ near $\gamma(t_1)$, and we can choose the $K$th order jet of $\Theta^{(1)}|_{\partial M}$ at $\gamma(t_1)$ to be equal to that of $\Theta^{(0)}|_{\partial M}$ with the following exception: we always have 
\begin{gather*}
d(\Theta^{(0)})|_{\gamma(t_1)} = \dot{\gamma}(t_1 -)^{\flat}, \\
d(\Theta^{(1)})|_{\gamma(t_1)} = \dot{\gamma}(t_1 +)^{\flat}, \\
\end{gather*}
It follows that 
\begin{gather*}
d(\Theta^{(0)}|_{\partial M})|_{\gamma(t_1)} = \dot{\gamma}(t_1 -)^{\flat}_{tan}, \\
d(\Theta^{(1)}|_{\partial M})|_{\gamma(t_1)} = \dot{\gamma}(t_1 +)^{\flat}_{tan}, \\
\end{gather*}
where we have taken the projections to the cotangent space of $\partial M$ at $\gamma(t_1)$. But by the rule that the angle of incidence equals angle of reflection, it holds that $\dot{\gamma}(t_1 -)^{\flat}_{tan} = \dot{\gamma}(t_1 +)^{\flat}_{tan}$. Thus actually the $K$th order jets of $\Theta^{(0)}|_{\partial M}$ and $\Theta^{(1)}|_{\partial M}$ coincide at $\gamma(t_1)$, and by Proposition \ref{prop_gaussianbeam_quasimode} we can also arrange that the $K$th order jets of $a^{(0)}|_{\partial M}$ and $a^{(1)}|_{\partial M}$ coincide at $\gamma(t_1)$.

Write $f_s = v_s^{(0)} - v_s^{(1)}|_{\partial M}$, and let $(t,y)$ be coordinates near $\gamma(t_1)$ such that $\partial M$ is parametrized by $y \mapsto (t_1,y)$ and $\gamma(t_1)$ corresponds to $(t_1,0)$. Recall that $v_s^{(j)}$ are supported in small tubular neighborhoods of the corresponding geodesic segments. By the above considerations and the construction of $\Theta^{(j)}$ and $a^{(j)}$, and dropping the variable $t_1$ from the notations, the restrictions of $\Theta^{(j)}$ and $a^{(j)}$ to $\partial M$ satisfy 
$$
\Theta^{(j)}(y) = \Theta(y) + \Xi^{(j)}(y), \quad a^{(j)}(y) = a(y)+ b^{(j)}(y)
$$
where $\Theta$ is a polynomial of order $K$, $a = \tau^{\frac{m-1}{4}} \tilde{a} \chi(y/\delta')$ where $\tilde{a}$ is a polynomial of order $K$, and $\abs{\Xi^{(j)}(y)} \leq C \abs{y}^{K+1}$ and $\abs{b^{(j)}(y)} \leq C \tau^{\frac{m-1}{4}} \abs{y}^{K+1} \chi(y/\delta')$ on $\supp(\chi(\,\cdot\,/\delta')$, where $\chi$ is a cutoff function and $\delta'$ is a constant independent of $\tau$ that can be chosen as small as we want (these initially depend on $j$, but since there are finitely many reflections we can choose them independent of $j$). Here $\Theta$ and $\Xi^{(j)}$ are independent of $\tau$, and $a$ and $b^{(j)}$ are mildly $\tau$-dependent and satisfy uniform bounds with respect to $\tau$. Then 
$$
f_s = e^{is\Theta}( (e^{is \Xi^{(0)}} - e^{is \Xi^{(1)}}) a + e^{is \Xi^{(0)}} b^{(0)} - e^{is \Xi^{(1)}} b^{(1)} ).
$$
We have 
$$
e^{is \Xi^{(0)}} - e^{is \Xi^{(1)}} = is(\Xi^{(0)} - \Xi^{(1)}) \int_0^1 e^{is(r \Xi^{(0)} + (1-r) \Xi^{(1)})} \,dr
$$
and consequently near $y = 0$ 
$$
\abs{e^{is \Xi^{(0)}} - e^{is \Xi^{(1)}}} \leq C\tau \abs{y}^{K+1} e^{C \tau \abs{y}^{K+1}}.
$$
Thus, near $y = 0$â 
$$
\abs{f_s(y)} \leq C \tau^{\frac{m-1}{4}} e^{-\tau \im(\Theta)} \tau \abs{y}^{K+1} e^{C \tau \abs{y}^{K+1}} \chi(y/\delta').
$$
Using that the Hessian of $\im(\Theta)$ at $0$ is positive definite and choosing $\delta'$ sufficiently small, we have 
$$
\abs{f_s(y)} \leq C \tau^{\frac{m-1}{4}} e^{-c \tau \abs{y}^2} \tau \abs{y}^{K+1} \chi(y/\delta').
$$
Integrating the square of $\abs{f_s}$ over $\mR^{m-1}$ and changing $y$ to $\tau^{-1/2} y$, we obtain 
$$
\norm{f_s}_{L^2(R_1)} = O(\tau^{-\frac{K-1}{2}})
$$
where $R_1$ is a small neighborhood of $\gamma(t_1)$ on $\partial M$ containing the set of interest.

Repeating this construction for the other points of reflection, we end up with a quasimode 
$$
v_s = \sum_{j=0}^N (-1)^j v_s^{(j)}
$$
that is supported in a small neighborhood of the broken ray $\gamma$. Since all points of reflection are distinct, we can arrange that the quasimode satisfies 
$$
\norm{v_s|_R}_{L^2(R)} = O(\tau^{-\frac{K-1}{2}}).
$$
It also satisfies 
$$
\norm{(-\Delta - s^2) v_s}_{L^2(M)} = O(\tau^{-K}), \quad \norm{v_s}_{L^2(M)} = O(1).
$$
Replacing $K$ by $2K+1$, we have proved all the other statements in the proposition except for the expression of the limit measure. To do this, we consider the finitely many points where the full broken ray $\gamma$ self-intersects or reflects, and decompose the terms $v_s^{(j)}$ as in the proof of Proposition \ref{prop_gaussianbeam_quasimode} to parts living in small neighborhoods of the self-intersection and reflection points and parts away from these points. Now all self-intersection points are in the interior or on $E$ and all self-intersections must be transversal, and also all reflections are transversal. Consequently, when forming $\abs{v_s}^2$, the cross terms arising from different parts living near the same self-intersection or reflection point contribute an $o(1)$ term by non-stationary phase as in the proof of Proposition \ref{prop_gaussianbeam_quasimode}. Thus the limit measure of $\abs{v_s}^2 \,dV_g$ is indeed the measure $e^{-2\lambda t} \delta_{\gamma}$, where $\delta_{\gamma}$ is the delta function of the broken ray $\gamma$.
\end{proof}

\section{Recovering the broken ray transform} \label{sec_brokenraytransform}

In this section we give the proof of Theorem \ref{theorem_main_brokenrays} concerning the recovery of integrals over broken rays.

\begin{proof}[Proof of Theorem \ref{theorem_main_brokenrays}]
The proof is very similar to the proof of Theorem \ref{theorem_main_local_integrals}, except that we use reflected Gaussian beam quasimodes instead of WKB type quasimodes. Let $\gamma: [0,L] \to M_0$ be a nontangential broken ray with endpoints on $E$, and let $\lambda > 0$. Let also $\tilde{g} = e \oplus g_0$ and $\tilde{q}_j = c(q_j - c^{\frac{n-2}{4}} \Delta_g(c^{-\frac{n-2}{4}}))$. Consider the complex frequency 
$$
s = \tau + i\lambda
$$
where $\tau > 0$ will be large. We look for solutions 
\begin{align*}
\tilde{u}_1 &= e^{-s x_1}(v_s(x') + r_1), \\
\tilde{u}_2 &= e^{s x_1}(v_s(x') + r_2),
\end{align*}
of the equations $(-\Delta_{\tilde{g}} + \tilde{q}_1) \tilde{u}_1 = 0$, $(-\Delta_{\tilde{g}} + \overline{\tilde{q}_2}) \tilde{u}_2 = 0$ in $M$. Here $v_s \in C^{\infty}(M_0)$ is the quasimode constructed in Proposition \ref{prop_gaussianbeam_quasimode_reflected} that concentrates near the given broken ray $\gamma$ and is small on $\partial M_0 \setminus E$.

Since $\Delta_{\tilde{g}} = \partial_1^2 + \Delta_{g_0}$, the function $\tilde{u}_1$ is a solution if and only if 
$$
e^{s x_1} (-\Delta_{\tilde{g}} + \tilde{q}_1) (e^{-s x_1} r_1) = -(-\Delta_{g_0} + \tilde{q}_1 - s^2) v_s(x') \quad \text{in } M.
$$
We look for a solution in the form $r_1 = e^{i\lambda x_1} r_1'$ where $r_1'$ satisfies 
$$
e^{\tau x_1} (-\Delta_{\tilde{g}} + \tilde{q}_1) (e^{-\tau x_1} r_1') = f \quad \text{in } M
$$
with 
$$
f = -e^{-i\lambda x_1}(-\Delta_{g_0} + \tilde{q}_1 - s^2) v_s(x').
$$
To arrange that $\tilde{u}_1|_{\Gamma_i} = 0$, fix some small $\delta > 0$, let $S_{\pm}$ and $S_0$ be the sets in Proposition \ref{prop_carleman_solvability} with Carleman weight $\varphi(x) = -x_1$, and consider the boundary condition 
$$
e^{\tau \phi} r_1'|_{S_- \cup S_0} = e^{\tau \phi} f_-
$$
where 
$$
f_- = \left\{ \begin{array}{cl} -e^{-i\lambda x_1} v_s(x'), & \quad \text{on } \Gamma_i, \\ 0, & \quad \text{on } (S_- \cup S_0) \setminus \Gamma_i. \end{array} \right.
$$

For any fixed $K > 0$, by Proposition \ref{prop_gaussianbeam_quasimode_reflected} and by the condition that $\Gamma_i \subset \mR \times (\partial M_0 \setminus E)$ we may assume that the following bounds are valid:
$$
\norm{f}_{L^2(M)} = O(1), \quad \norm{f_-}_{L^2(S_-)} = 0, \quad \norm{f_-}_{L^2(S_0)} = O(\tau^{-K}).
$$
It follows from Proposition \ref{prop_carleman_solvability} that there is a solution $r_1'$ satisfying the above boundary condition and having the estimate 
$$
\norm{r_1'}_{L^2(M)} = O(\tau^{-1}).
$$

Choosing $r_1'$ as described above and choosing $r_1 = e^{i\lambda x_1} r_1'$, we have produced a solution $\tilde{u}_1 \in H_{\Delta_{\tilde{g}}}(M)$ of the equation $(-\Delta_{\tilde{g}} + \tilde{q}_1) \tilde{u}_1 = 0$ in $M$, having the form 
$$
\tilde{u}_1 = e^{-s x_1}(v_s(x') + r_1)
$$
and satisfying 
$$
\supp(\tilde{u}_1|_{\partial M}) \subset \overline{\partial M_+ \cup \partial M_- \cup \Gamma_a}
$$
and $\norm{r_1}_{L^2(M)} = O(\tau^{-1})$ as $\tau \to \infty$. Repeating this construction for the Carleman weight $\phi(x) = x_1$, we obtain a solution $\tilde{u}_2 \in H_{\Delta_{\tilde{g}}}(M)$ of the equation $(-\Delta_{\tilde{g}} + \overline{\tilde{q}_2}) \tilde{u}_2 = 0$ in $M$, having the form 
$$
\tilde{u}_2 = e^{s x_1}(v_s(x') + r_2)
$$
satisfying the same support condition and bound for $\norm{r_2}_{L^2(M)}$.

Writing $u_j = c^{-\frac{n-2}{4}} \tilde{u}_j$, Lemma \ref{lemma_conformal_scaling_solutions} shows that $u_j \in H_{\Delta_g}(M)$ are solutions of $(-\Delta_g + q_1) u_1 = 0$ and $(-\Delta_g + \overline{q_2}) u_2 = 0$ in $M$. Then Proposition \ref{prop_integralidentity} implies that 
$$
\int_M (q_1-q_2) u_1 \overline{u_2} \,dV_g = 0.
$$
We extend $q_1-q_2$ by zero to $\mR \times M_0$. Inserting the expressions for $u_j$, and using that $dV_g = c^{n/2} \,dx_1 \,dV_{g_0}(x')$, we obtain 
$$
\int_{M_0} \int_{-\infty}^{\infty} (q_1-q_2) c e^{-2i\lambda x_1}(\abs{v_s(x')}^2 + v_s \overline{r_2} + \overline{v_s} r_1 + r_1 \overline{r_2}) \,dx_1 \,dV_{g_0}(x') = 0.
$$
Since $\norm{r_j}_{L^2(M)} = O(\tau^{-1})$ as $\tau \to \infty$, Proposition \ref{prop_gaussianbeam_quasimode_reflected} implies that 
$$
\int_0^L e^{-2\lambda t}(c(q_1-q_2))\ehat(2\lambda,\gamma(t)) \,dt = 0.
$$
This concludes the proof.
\end{proof}

\bibliographystyle{alpha}

\end{document}